\theoremstyle{plain}
 \newtheorem{theorem}{Theorem}[section]
 \newtheorem{proposition}[theorem]{Proposition}
 \newtheorem{lemma}[theorem]{Lemma}
 \newtheorem{corollary}[theorem]{Corollary}
 \newtheorem*{assertion*}{Assertion}
 \newtheorem{open problem}[theorem]{Open problem}
 \newtheorem{proposed project}[theorem]{Proposed project}
\theoremstyle{definition}
 \newtheorem{definition}[theorem]{Definition}
\theoremstyle{remark}
 \newtheorem{remark}[theorem]{Remark}
 \numberwithin{equation}{section}
\title{The MMP singularities of GIT versus Baily-Borel compactifications for the ball quotient case}
\author{Dali Shen}
\date{\today}
\address{Beijing Institute of Mathematical Sciences and Applications, No. 544 Hefangkou, Huairou District, Beijing 101408, China}
\email{shendali@bimsa.cn}
\begin{document}

\setcounter{page}{1}
\pagenumbering{arabic}

\maketitle

\begin{abstract}
When the geometric invariant theory and the Baily-Borel theory both apply to a moduli space, the two resulting 
compactifications do not necessarily coincide. They usually differ by a birational transformation in terms of 
an arrangement, for which the general pattern was firstly described by Looijenga. 
In this paper, we compare the MMP singularities on both sides for the ball quotient case. 
We show that if the relevant arrangement is nonempty, the birational transformation from the GIT compactification 
to the Baily-Borel compactification turns non-log canonical singularities to log canonical singularities. 
We illustrate this with the moduli spaces of quartic curves, rational elliptic surfaces and cubic threefolds.
\end{abstract}

\tableofcontents

\section{Introduction}

This paper intends to compare the Minimal Model Program (MMP) singularities for the two compactifications of a moduli space, 
when the Geometric Invariant Theory (GIT) and the Baily-Borel theory (BB) both applies to that moduli space. 
In the seminal work \cite{Looijenga-2003a,Looijenga-2003b} of Looijenga, it is shown that one can connect two compactifications 
by a birational modification in terms of an arrangement, for both the ball quotient case and the type IV domain quotient case. 
This, later on, leads to the so-called Hassett-Keel-Looijenga program to study the interpolation of the two 
compactifications for a moduli space. In this paper, however, we wish to discuss the MMP singularities on both sides, 
providing another perspective to look at the two algebro-geometric compactifications. We will focus on the ball quotient 
case in the present paper.

Let $\mathbb{B}$ be a complex ball and $\mathbb{L}\rightarrow\mathbb{B}$ be the automorphic line bundle over $\mathbb{B}$. 
Let $\Gamma$ be an arithmetic group acting properly on the pair $(\mathbb{B},\mathbb{L})$ so that we denote the $\Gamma$-orbit 
space by $X:=\mathbb{B}/\Gamma$ and then the automorphic line bundle $\mathbb{L}$ drops to an orbifold line bundle 
$\mathscr{L}$ over $X$. The Baily-Borel theory tells us that $X$ is a quasi-projective variety and it can be completed to 
a projective variety $X^{*}$ by adding finitely many cusps. Suppose we are given a $\Gamma$-invariant locally finite union of 
hyperplane sections in $\mathbb{B}$. This determines a collection of hypersurfaces in $X$ which we assume is also an arrangement 
$\mathscr{H}_{X}$ on $X$, whose complement we denote by $X^{\circ}$. It naturally extends to a collection $\mathscr{H}_{X^{*}}$ 
of hypersurfaces on $X^{*}$ while of which some members may not support a Cartier divisor.

If $X^{\circ}$ can be identified with a space consisting of the stable orbits of an integral normal projective variety with respect to 
a reductive group, then it can also be completed to a compactification in the sense of GIT. To be more specific, let $G$ 
be a reductive group acting on an integral normal projective variety $Y$ endowed with an ample line bundle $\eta$. There exists 
a $G$-invariant open-dense subset $U\subset Y$ consisting of $G$-stable orbits such that $(U,\eta|U)/G$ is a quasi-projective 
variety endowed with an orbifold line bundle. The geometric invariant theory tells us that the algebra of $G$-invariant 
sections of powers of the line bundle $\eta$ is finitely generated and its Proj is a projective variety. The points of 
this variety represent the minimal $G$-orbits in the semistable locus $Y^{ss}\subset Y$, therefore we denote this Proj by 
$Y^{ss}/\!\!/G$.

Suppose that there is an isomorphism between the above two orbifold line bundles 
\[
(U,\eta|U)/G\cong (X^{\circ},\mathscr{L}|X^{\circ}),
\]
assume also a mild boundary condition on both sides (see Theorem $\ref{thm:GIT compactifications}$ for details) for which 
it is usually satisfied when the isomorphism is given by a period map, then it is shown in \cite{Looijenga-2003a} 
that the above isomorphism $U/G\cong X^{\circ}$ extends to an isomorphism 
\[
Y^{ss}/\!\!/G\cong \widehat{X^{\circ}},
\]
where $\widehat{X^{\circ}}$ is a birational modification of $X^{*}$. For that, we first introduce the intermediate Looijenga 
compactification $X^{J}$, a partial resolution of $X^{*}$, so that the collection $\mathscr{H}_{X^{J}}$ of the strict transforms 
of the members in $\mathscr{H}_{X^{*}}$ becomes an arrangement on $X^{J}$. Then the modification is obtained by a sequence of 
blow-ups of $X^{J}$ in terms of the intersection lattice of the arrangement on $X^{J}$ (we denote this resolved space 
by $\tilde{X}$) and then followed by a sequence of blow-downs so that $\widehat{X^{\circ}}$ is still naturally stratified.

Both compactifications $X^{*}$ and $\widehat{X^{\circ}}$ are dominated by $\tilde{X}$ as shown in the following 
diagram.
\[
\begin{tikzpicture}[scale=2]
	\node (A) at (0,0) {$\widehat{X^{\circ}}$};
	\node (B) at (1,1) {$\tilde{X}$};
	\node (C) at (1.5,0.5) {$X^{J}$};
	\node (D) at (2,0) {$X^{*}$};
	\draw
	(A) edge[<-,>=angle 60]      (B)
	(B) edge[->,>=angle 60]      (C)
	(C) edge[->,>=angle 60]      (D)
	(A) edge[<->,>=angle 60,dashed]      (D);
\end{tikzpicture}
\]

Let $\Delta^{*}:=\sum a_{i}D_{i}$ be a $\mathbb{Q}$-divisor on $X^{*}$ where $D_{i}$ is a member of 
the collection $\mathscr{H}_{X^{*}}$ on $X^{*}$ (hence irreducible) and 
\[
a_{i}:=1-\frac{1}{m_{i}}
\]
with $m_{i}$ the ramification order of $D_{i}$. It is not difficult to see that the log pair $(X^{*},\Delta^{*})$ 
is log canonical (lc) since there are only quotient singularities plus cusps on $X^{*}$.

Now the collection $\mathscr{H}_{X^{J}}$ becomes an arrangement on $X^{J}$. Let $\Lambda(\mathscr{H}_{X^{J}})$ denote the 
collection of irreducible components of intersections 
taken from subsets of $\mathscr{H}_{X^{J}}$. For each $L\in\Lambda(\mathscr{H}_{X^{J}})$, it determines an exceptional divisor 
$E(L)$ on $\tilde{X}$ which has a product decomposition $E(L)=\tilde{L}\times\tilde{\mathbb{P}}(L,X^{J})$ and contains an open-dense 
subset $E(L)^{\circ}=L^{\circ}\times\mathbb{P}(L,X^{J})^{\circ}$ where $\mathbb{P}(L,X^{J})$ is the projectivized normal space 
of $L$ in $X^{J}$, and the over-symbol $\tilde{}$ denotes the blow-up with respect to an arrangement and the superscript 
$^{\circ}$ denotes an arrangement complement respectively for a given space if the corresponding arrangement is understood. 
Define 
\[
a_{L}:=(\mathrm{codim}L)^{-1}\sum_{i:D_{i}\supset L}a_{i}.
\]
Now assume $a_{L}-1\neq 0$, 
write a point $z=(z_{0},z_{1})\in E(L)^{\circ}$, let $\tilde{X}_{z}$ denote the germ of $\tilde{X}$ at $z$ and so for $L_{z_{0}}$, 
etc., then there exist a submersion $F_{0}:\tilde{X}_{z}\rightarrow L^{\circ}_{z_{0}}$, 
a submersion $F_{1}:\tilde{X}_{z}\rightarrow T_{1}$ (with $T_{1}$ a vector space, identified with 
$\mathbb{P}(L,X^{J})^{\circ}_{z_{1}}$) and a defining equation $t$ for $E(L)_{z}^{\circ}$ 
such that $(F_{0},t,F_{1})$ is an affine chart for $\tilde{X}_{z}$, and the developing map is projectively equivalent to the map 
\[
[(1,F_{0}),t^{1-a_{L}},t^{1-a_{L}}F_{1}]:\tilde{X}_{z}\rightarrow L^{\circ}_{z_{0}}\times \mathbb{C}\times T_{1}.
\]
The exponent $1-a_{L}$ can be also interpreted as the scalar cone angle of the stratum $L^{\circ}$ in the ball quotient 
$X$ so that it must be positive.

On the other hand, if we regard $E(L)$ as the blow-up of the stratum $Z:=\mathbb{P}(L,X^{J})^{\circ}$ from the GIT 
side $\widehat{X^{\circ}}$, then the above projective developing map can be also written as 
\[
[t^{a_{L}-1}(1,F_{0}),1,F_{1}]:\tilde{X}_{z}\rightarrow L^{\circ}_{z_{0}}\times \mathbb{C}\times T_{1}.
\]
The negativity of the exponent $a_{L}-1$ will then help us to show that the discrepancy of the exceptional divisor $E(Z)$ 
with respect to the variety $\widehat{X^{\circ}}$ is less than $-1$.

Therefore, we have the following theorem.
\begin{theorem}[Theorem $\ref{thm:GIT non-lc}$]
Suppose that the GIT compactification $\widehat{X^{\circ}}$ is a birational modification of the Baily-Borel 
compactification $X^{*}$ in terms of an arrangement, or equivalently, the relevant 
arrangement is nonempty, then the GIT compactification $\widehat{X^{\circ}}$ has non-lc singularities.
\end{theorem}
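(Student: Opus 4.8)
The plan is to reduce the statement to exhibiting a single prime divisor over $\widehat{X^{\circ}}$ whose discrepancy with respect to $\widehat{X^{\circ}}$ is strictly less than $-1$; by definition this already forces $\widehat{X^{\circ}}$ to fail to be log canonical. Write $\pi\colon\tilde{X}\to X^{*}$ for the morphism factoring through $X^{J}$, and $\rho\colon\tilde{X}\to\widehat{X^{\circ}}$ for the contraction realizing $\widehat{X^{\circ}}$ as a blow-down of $\tilde{X}$. Since the arrangement is nonempty, the nontriviality of the modification provides a stratum $L\in\Lambda(\mathscr{H}_{X^{J}})$ that is neither a divisor nor a point, i.e. a ball-quotient stratum with $1\le\dim L\le\dim X-2$; its exceptional divisor $E(L)=\tilde{L}\times\tilde{\mathbb{P}}(L)$ is contracted by $\rho$ onto $Z=\mathbb{P}(L,X^{J})^{\circ}$, which has codimension at least $2$ in $\widehat{X^{\circ}}$. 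Viewed over $\widehat{X^{\circ}}$ this $E(L)$ is the divisor $E(Z)$ of the introduction, and the task is to compute its discrepancy.

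The heart of the argument is a local analysis of $\widehat{X^{\circ}}$ along $Z$, and this is exactly where the two forms of the developing map enter. Over a general point $z\in E(L)^{\circ}$ with image $z'=\rho(z)\in Z$, the $X^{*}$-side model $[(1,F_{0}),t^{1-a_{L}},t^{1-a_{L}}F_{1}]$ has the \emph{positive} exponent $1-a_{L}$ (the cone angle of the stratum), and this positivity is what keeps the discrepancy of $E(L)$ over $(X^{*},\Delta^{*})$ at least $-1$, consistently with $(X^{*},\Delta^{*})$ being log canonical. On the GIT side the same map reads $[t^{a_{L}-1}(1,F_{0}),1,F_{1}]$, where the exponent $a_{L}-1$ is now negative: as $t\to 0$ the $L$-block $t^{a_{L}-1}(1,F_{0})$ diverges, so $\rho$ blows down the projective fibre $\tilde{L}=\rho^{-1}(z')$ with strictly positive weights. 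Matching this with Looijenga's blow-down identifies $\widehat{X^{\circ}}$, near $z'$, with the germ at the vertex of the affine cone over the projective variety $\tilde{L}$ with respect to an ample polarization $A$ — determined by the exponent $1-a_{L}$ and the automorphic bundle $\mathscr{L}_{L}$ of $L^{\circ}$ — times a smooth factor of dimension $\mathrm{codim}\,L-1$, and it identifies $\rho$ locally with the blow-up of the vertex.

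Granting this description, the discrepancy follows by adjunction on the exceptional divisor: for the affine cone over a projective $\tilde{L}$ with ample $A$, the $\mathbb{Q}$-Gorenstein condition forces $K_{\tilde{L}}\sim_{\mathbb{Q}}rA$, and the exceptional divisor — a copy of $\tilde{L}$ with conormal bundle $A$ — has discrepancy $-1-r$, the smooth factor contributing nothing. Since $\tilde{L}$ is birational to a compactification of the ball quotient $L^{\circ}$ its canonical class is big, and being $\mathbb{Q}$-proportional to the ample $A$ it must be that $r>0$; in fact $r$ can be pinned down from $\dim L$ and $a_{L}$ via the proportionality $K_{L}+\Delta_{L}\sim_{\mathbb{Q}}(\dim L+1)\mathscr{L}_{L}$ on $L$ (with the evident boundary $\Delta_{L}$), and comes out a positive multiple of $1-a_{L}$. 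Hence the discrepancy of $E(L)=E(Z)$ with respect to $\widehat{X^{\circ}}$ equals $-1-r<-1$, so $\widehat{X^{\circ}}$ is not log canonical along $Z$, which is the theorem.

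I expect the main obstacle to be the second step: making rigorous that, near a general point of $Z$, Looijenga's blow-down is the contraction of the projective stratum $\tilde{L}$ with \emph{ample} conormal class — i.e. translating correctly between the analytic developing-map picture on the germ $\tilde{X}_{z}$, with its fractional exponent and orbifold structure, and the birational geometry of the blow-down — and checking both that a stratum $L$ in the admissible range really occurs whenever $\widehat{X^{\circ}}\ne X^{*}$ (rather than all of the modification being absorbed into the cusps or a single arrangement divisor) and that the resulting $\tilde{L}$ has big canonical class, so that $r>0$. Secondary technical points are carrying the quotient/orbifold singularities already present on $X^{*}$ and along the strata through the cone construction in the $\mathbb{Q}$-Gorenstein category, and, for the explicit value of $r$, the bookkeeping of the exponent $t^{a_{L}-1}$ through the blow-down; for the qualitative conclusion only the sign $r>0$ is needed.
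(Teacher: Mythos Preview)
Your overall strategy — exhibit a single prime divisor over $\widehat{X^{\circ}}$ with discrepancy $<-1$ — matches the paper, and you correctly isolate the crucial point: the GIT-side form $[t^{a_{L}-1}(1,F_{0}),1,F_{1}]$ of the developing map has \emph{negative} exponent $a_{L}-1$, in contrast with the positive exponent $1-a_{L}$ on the Baily--Borel side. Where you diverge is in how you extract the discrepancy from this. The paper does not pass through an affine-cone model or appeal to bigness of $K_{\tilde L}$; it proves a direct formula (Proposition~\ref{prop:discrepancy}): if the projective structure near the exceptional divisor of $Bl_{Z}\widehat{X^{\circ}}$ has relative logarithmic exponent $\lambda$, then $a(E(Z),\widehat{X^{\circ}})=-k\lambda-1$ with $k=\mathrm{codim}\,Z$. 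Reading off $\lambda=1-a_{L}$ from the GIT-side chart yields $a(E(Z),\widehat{X^{\circ}})=k(a_{L}-1)-1<-1$, since $a_{L}-1<0$ by the scalar-cone-angle interpretation (Remark~\ref{rem:cone angle}). No further input is required.

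Your route has two genuine difficulties. First, the restriction $1\le\dim L\le\dim X-2$ is both unnecessary and sometimes fatal: the paper allows any $L$ with $a_{L}\neq 1$, in particular a hypersurface member $L=D_{i}$, for which $Z$ is a point and $k=\dim X$. In the quartic-curve example the relevant arrangement $\mathscr{H}_{h}$ consists of pairwise disjoint hyperplane sections, so $\Lambda(\mathscr{H}_{h})=\{X^{J},D_{h}\}$ and there is \emph{no} $L$ in your range; your proof would stall there while the paper's goes through with $L=D_{h}$. Second, your cone model requires $K_{\tilde L}\sim_{\mathbb Q}rA$ with $A$ ample on $\tilde L$, but when the induced arrangement $\mathscr{H}_{L}$ is nonempty $\tilde L$ is a nontrivial blow-up of $L^{*}$, so $K_{\tilde L}$ is at best big but not ample and cannot be $\mathbb Q$-proportional to an ample class; either the cone identification fails as stated or $A$ is only semiample, and then the formula $-1-r$ does not follow. (Your bigness claim is also not automatic: low-dimensional arithmetic ball quotients need not be of general type.) The paper's logarithmic-exponent formula sidesteps all of this by working on the fibre $W\cong\mathbb{P}^{k-1}$ of the ordinary blow-up of $Z$, where the projective structure acts by a dilatation of weight $\lambda$ and adjunction gives $\omega_{W}=\mathscr{O}_{W}(k\lambda)$ directly.
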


And it tells us the following fact.
\begin{corollary}[Corollary $\ref{cor:non-lc to lc}$]
The birational transformation $\widehat{X^{\circ}}\dashrightarrow X^{*}$ turns non-lc singularities on $\widehat{X^{\circ}}$ 
to lc singularities on $X^{*}$.
\end{corollary}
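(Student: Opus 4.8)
The plan is to combine two facts already in place: $(X^{*},\Delta^{*})$ is log canonical (recorded above, because $X^{*}$ has only quotient singularities and cusps), while the GIT side, equipped with its natural boundary, fails to be log canonical by Theorem~\ref{thm:GIT non-lc}. What remains is to check that these two pairs are honestly matched by the birational map $\widehat{X^{\circ}}\dashrightarrow X^{*}$, so that the word ``turns'' is justified: the modification excises exactly a neighbourhood of the non-lc locus and glues in an lc piece.

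First I would set up the two log pairs compatibly. On the GIT side take the boundary $\widehat{\Delta}$ whose support consists of the strict transforms of the members of $\mathscr{H}_{X^{*}}$ carrying the same coefficients $a_{i}=1-1/m_{i}$. Because the isomorphism $U/G\cong X^{\circ}$ respects the orbifold line bundles, both $K_{X^{*}}+\Delta^{*}$ and $K_{\widehat{X^{\circ}}}+\widehat{\Delta}$ are $\mathbb{Q}$-linearly equivalent to the same positive multiple of the automorphic line bundle (which over $X^{\circ}$ is the GIT polarization), and the two pairs agree over the open locus on which $\widehat{X^{\circ}}\dashrightarrow X^{*}$ is an isomorphism. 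Pulling back to the common dominating space $\tilde{X}$ via $f\colon\tilde{X}\to X^{*}$ and $g\colon\tilde{X}\to\widehat{X^{\circ}}$, the difference $f^{*}(K_{X^{*}}+\Delta^{*})-g^{*}(K_{\widehat{X^{\circ}}}+\widehat{\Delta})$ is then supported on the divisors lying over the modification locus.

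Next I would localise the failure of log canonicity. Away from the modification locus, $\widehat{X^{\circ}}\dashrightarrow X^{*}$ is an isomorphism factoring through the partial resolution $X^{J}\to X^{*}$, so there $(\widehat{X^{\circ}},\widehat{\Delta})$ inherits log canonicity from $(X^{*},\Delta^{*})$; hence the non-lc locus of $(\widehat{X^{\circ}},\widehat{\Delta})$ lies inside the divisors $E(Z)$ created by the GIT-side blow-downs of $\tilde{X}$. For precisely those divisors Theorem~\ref{thm:GIT non-lc} exhibits, via the developing map in the form $[t^{a_{L}-1}(1,F_{0}),1,F_{1}]$ together with the negativity $a_{L}-1<0$, a discrepancy strictly below $-1$, so there are genuinely non-lc points (this is where the hypothesis that the arrangement is nonempty enters). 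Since each such $E(Z)$ is also $f$-exceptional, contracted onto a cusp or an arrangement stratum of $X^{*}$, the birational transformation to $X^{*}$ removes the neighbourhoods of the non-lc locus and replaces them by open subsets of the log canonical pair $(X^{*},\Delta^{*})$. This is the assertion.

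The part I expect to be most delicate is the bookkeeping of the middle step rather than any new idea: checking that the coefficients $a_{i}$ pass unchanged through the birational map, and that no non-lc point hides away from the $E(Z)$. The sensitive places are the cusps of $X^{*}$, where $\mathscr{L}_{X^{*}}$ is only $\mathbb{Q}$-Cartier and some members of $\mathscr{H}_{X^{*}}$ need not support a Cartier divisor, and the passage through $X^{J}$: one must ensure that blowing up the (log canonical) cusp and arrangement strata produces only discrepancies $\geq -1$, which is exactly where the positivity of the cone angle $1-a_{L}$ on the Baily--Borel side is used, in contrast with its negation $a_{L}-1<0$ on the GIT side.
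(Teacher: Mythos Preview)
Your approach is essentially the paper's: combine the lc property of $(X^{*},\Delta^{*})$ established in Section~\ref{sec:singularities} with Theorem~\ref{thm:GIT non-lc} to conclude that the birational map replaces non-lc loci by lc ones. In fact the paper treats this corollary as immediate (no separate proof is given), so your write-up is considerably more detailed than what the paper records.

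One small point of geometry worth tightening: the boundary $\widehat{\Delta}$ you introduce on the GIT side is actually zero. By hypothesis~(i) of Theorem~\ref{thm:GIT compactifications} the complement $\widehat{X^{\circ}}-X^{\circ}$ has codimension $\geq 2$, and the members $D_{i}$ of $\mathscr{H}_{X^{*}}$ are contracted to the strata $\mathbb{P}(D_{i},X^{J})^{\circ}$ (points, since $\mathrm{codim}\,D_{i}=1$); so their strict transforms are not divisors on $\widehat{X^{\circ}}$. This is why the paper computes the bare discrepancy $a(E(Z),\widehat{X^{\circ}})$ rather than a log discrepancy. Your argument survives this unchanged (an effective boundary only lowers discrepancies), but the bookkeeping you flag as ``delicate'' largely evaporates once you note $\widehat{\Delta}=0$.
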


We illustrate the above results by discussing three well studied examples to which both GIT and the Baily-Borel theories apply: 
the moduli spaces of quartic curves, rational elliptic surfaces and cubic threefolds.

The paper is organized as follows. In Section $\ref{sec:projective-structure}$, we give a very brief introduction to the 
affine and corresponding projective structures with logarithmic singularities on a complex manifold. In Section 
$\ref{sec:birational-operation}$ we describe the birational operations in terms of the intersection lattice of an 
arrangement, especially for an arrangement on a ball quotient. We then discuss the singularities for ball quotients 
in Section $\ref{sec:singularities}$, mainly from two perspectives: one is the MMP singularities in view of birational 
geometry and the other one is the conical singularities in view of hyperbolic geometry. In Section 
$\ref{sec:GIT singularities}$, we prove the main result for this paper, namely, if the relevant arrangement 
(for the birational transformation) is nonempty, then the GIT compactification $\widehat{X^{\circ}}$ has 
non-lc singularities, thus the birational transformation $\widehat{X^{\circ}}\dashrightarrow X^{*}$ turns non-lc singularities 
on $\widehat{X^{\circ}}$ to lc singularities on $X^{*}$. Finally, in Section $\ref{sec:applications}$, we 
illustrate the above main result with three examples: 
the moduli spaces of quartic curves, rational elliptic surfaces and cubic threefolds.

\vspace{5mm}
\textbf{Acknowledgements.} I thank Klaus Hulek for helpful comments on an earlier version of this paper.

\section{Projective structures}\label{sec:projective-structure}

\subsection{Affine and projective structures}

Let $M$ be a connected complex manifold of complex dimension $n$.

\begin{definition}
A \emph{projective structure} on $M$ is given by an atlas of complex-analytic charts for which the transition maps are 
projective-linear and which is maximal for that property. Likewise, an \emph{affine structure} on $M$ is given by 
an atlas of complex-analytic charts for which the transition maps are affine-linear and which is maximal for that property.
\end{definition}

So a projective structure on $M$ is locally modelled on the pair $(\mathbb{P}^{n}, \mathrm{Aut}(\mathbb{P}^{n}))$ of 
a projective space and its corresponding projective group and an affine structure is locally modelled on the pair 
$(\mathbb{A}^{n}, \mathrm{Aut}(\mathbb{A}^{n}))$ of an affine space and its corresponding affine group.

Recall from \cite{Couwenberg-Heckman-Looijenga} that an affine structure defines a subsheaf $\mathrm{Aff}_{M}$ of 
the structure sheaf $\mathcal{O}_{M}$ consisting of locally affine-linear functions, 
which is of rank $n+1$ and contains the constants. 
The differentials of these make up a local system in the sheaf $\Omega_{M}$ of differentials on $M$,
and such a local system is given by a holomorphic connection on $\Omega_{M}$,
$\nabla: \Omega_{M}\rightarrow\Omega_{M}\otimes\Omega_{M}$ with extension
$\nabla: \Omega_{M}^{r}\otimes\Omega_{M}\rightarrow\Omega_{M}^{r+1}\otimes\Omega_{M}$
by the Leibniz rule
$\nabla(\omega\otimes\zeta)=d\omega\otimes\zeta+(-1)^{r}\omega\wedge\nabla(\zeta)$ for $r\in\mathbb{N}$.
This connection is flat and torsion free. For any connection $\nabla$ on $\Omega_{M}$ its square 
$\nabla^2:\Omega^{r}_{M}\otimes\Omega_{M}\rightarrow\Omega^{r+2}_{M}\otimes\Omega_{M}$
is a morphism of $\mathcal{O}_{M}$-modules, given by wedging with a section $\mathrm{R}$ of
$\mathrm{End}_{\mathcal{O}_{M}}(\Omega_{M},\Omega^{2}_{M}\otimes\Omega_{M})$, called the curvature of $\nabla$,
and $\nabla$ is flat if and only if $\mathrm{R}=0$. The torsion freeness of the connection $\nabla$ on $\Omega_{M}$ 
means that the composite of $\wedge:\Omega_{M}\otimes\Omega_{M}\rightarrow\Omega^{2}_{M}$ with $\nabla:\Omega_{M}\rightarrow\Omega_{M}\otimes\Omega_{M}$
is equal to the exterior derivative $d:\Omega_{M}\rightarrow\Omega^{2}_{M}$.
Indeed flat differentials in $\Omega_{M}$ are then closed,
and by the Poincar\'{e} lemma give rise to a subsheaf of $\mathcal{O}_{M}$ of rank $n+1$ containing the constants.

Conversely, any flat torsion free connection on the cotangent bundle of $M$ defines an affine structure on $M$: 
the subsheaf $\mathrm{Aff}_{M}$ of $\mathcal{O}_{M}$ consisting of holomorphic functions whose total differential 
is flat for $\nabla$ is then of rank $n+1$ containing the constants, 
and the components of the charts in the atlas in question lie in $\mathrm{Aff}_{M}$. 
We refer to Deligne's lecture notes for an excellent exposition of the language of connections 
and more \cite{Deligne-1970}.

Throughout this paper we will not make any notational distinction between a connection on the cotangent bundle and 
the one on any bundle associated to the cotangent bundle (e.g., the tangent bundle).

For a projective structure on $M$, we notice that such a structure defines locally a tautological 
$\mathbb{C}^{\times}$-bundle $\pi: \mathscr{L}^{\times}\rightarrow M$ 
whose total space has an affine structure and for which the scalar multiplication respects the affine structure.
This local $\mathbb{C}^{\times}$-bundle is unique up to scalar multiplication and needs not be globally defined. 
But the flat torsion free connection on the $\mathbb{C}^{\times}$-bundle $\mathscr{L}^{\times}$ is not 
unique, because the way defining it produces not just the tautological 
$\mathbb{C}^{\times}$-bundle, but also a trivialization of the $\mathbb{C}^{\times}$-bundle.

\subsection{Logarithmic singularities}

We will in this subsection give a very brief introduction to a simple type of degeneration along a divisor 
on a complex manifold. A good exposition on this topic is Section $1$ of \cite{Couwenberg-Heckman-Looijenga}. 
A slightly generalized account can be found in Section $3$ of \cite{Shen-2022}.

Let $M$ be a connected complex manifold and $\tilde{M}\rightarrow M$ be a holonomy covering 
and denote its Galois group by $\Gamma$. So $\mathrm{Aff}(\tilde{M}):=
H^{0}(\tilde{M},\mathrm{Aff}_{\tilde{M}})$ is a $\Gamma$-invariant vector space of affine-linear functions on $\tilde{M}$. Then
the set of linear forms $\mathrm{Aff}(\tilde{M})\rightarrow\mathbb{C}$ which are the identity on $\mathbb{C}$ is an affine
$\Gamma$-invariant hyperplane in $\mathrm{Aff}(\tilde{M})^{*}$. We denote this set by $A$.

\begin{definition}
Given a holonomy cover as above, the evaluation map $ev: \tilde{M}\rightarrow A$ which assigns to $\tilde{z}$ the linear form
$ev_{\tilde{z}}\in A: \mathrm{Aff}(\tilde{M})\rightarrow\mathbb{C}; \tilde{f}\mapsto \tilde{f}(\tilde{z})$ is called the
\emph{developing map} of the affine structure.
\end{definition}

The developing map is known to be $\Gamma$-equivariant and a local affine isomorphism. So that it determines 
a natural affine atlas on $M$ whose charts take values in $A$ and whose transition maps lie in $\Gamma$.

\begin{definition}
Suppose an affine structure is given on a complex manifold $M$ by a torsion free flat connection $\nabla$. 
A nowhere zero holomorphic vector field $E$ on $M$ is called a \emph{dilatation field} 
with factor $\lambda\in\mathbb{C}$ if $\nabla_{V}(E)=\lambda V$ for every local vector field $V$.
\end{definition}

If $V$ is flat, then the torsion freeness yields: $[E,V]=\nabla_{E}(V)-\nabla_{V}(E)=-\lambda V$. This tells us that Lie derivative
with respect to $E$ acts on flat vector fields simply as multiplication by $-\lambda$. Hence it acts on flat differentials as
multiplication by $\lambda$.

Let $h$ be a flat Hermitian form 
on the tangent bundle of $M$ such that $h(E,E)$ is nowhere zero. Then the leaf space $M/E$ of the
dimension one foliation induced by $E$ inherits a Hermitian form $h_{M/E}$ in much the same way as the projective space of a finite
dimensional Hilbert space acquiring its Fubini-Study metric. We are especially interested in the case when $h_{M/E}$ is 
of hyperbolic type: 
\[
\lambda\neq 0, \; h(E,E)<0 \; \text{and} \; h>0 \; \text{on} \; E^{\perp}.
\]
Then the leaf space $M/E$ acquires a metric $h_{M/E}$ of constant negative holomorphic sectional curvature, 
for it is locally isometric to a complex hyperbolic space.

In order to understand the behavior of an affine structure near a given smooth subvariety of its singular locus, we need to blow up
that subvariety so that we are led to the situation of codimension one. Let us first look at the simplest degenerating affine
structures, that we will encounter often later on, as follows.

\begin{definition}\label{def:connection}
	Let $D$ be a smooth connected hypersurface in a complex manifold $N$ and an affine structure on $N-D$ is endowed. We say that
	the affine structure on $N-D$ has an \emph{infinitesimally simple degeneration along D} of logarithmic exponents
	$\lambda$ and $\mu\in\mathbb{C}$ if
	\begin{enumerate}
		\item $\nabla$ extends to $\Omega_{N}(\log D)$ with a logarithmic pole along $D$,
		
		\item the residue of this extension along $D$ preserves the subsheaf
		$\Omega_{D}\subset\Omega_{N}(\log D)\otimes\mathcal{O}_{D}$ and its eigenvalue on 
		the quotient sheaf $\mathcal{O}_{D}$ is $\lambda$, and
		
		\item the residue endomorphism restricted to $\Omega_{D}$ is semisimple and all of its eigenvalues are $\lambda$ or $\mu$.
	\end{enumerate}
	The adjective \emph{infinitesimally} can be dropped if in addition 
	\begin{enumerate}[resume]
		\item the connection has a semisimple monodromy on the tangent bundle of $N-D$.
	\end{enumerate}
\end{definition}

If $z$ is a point in $D\subset N$ where $N$ and $D$ are as above, we let $N_{z}$ denote the germ of $N$ at $z$, 
and so for $D_{z}$. We have the following local model for the behavior of the developing map for such a 
degenerating affine structure.

\begin{proposition}
	Let $D$ be a smooth connected hypersurface in a complex manifold $N$ and an affine structure on $N-D$ is endowed. Let $z$ 
	be a point in $D$. Then the infinitesimally simple degeneration along $D$ at $z$ of logarithmic exponents $\lambda,\mu\in\mathbb{C}$
	can also be described in terms of local coordinates.
	
	Namely, there exist a local equation $t$ for $D$ and a local chart
	\[ (F_{\mu},t,F_{\lambda}):N_{z}\rightarrow (T_{\mu}\times\mathbb{C}\times T_{\lambda})_{(0,0,0)} \]
	($T_{\lambda}$ incorporated into $T_{\mu}$ when $\lambda=\mu$), where $T_{\mu}$ and $T_{\lambda}$ are vector spaces, such that the
	developing map near $z$ is affine equivalent to the following multivalued map: 
	\begin{eqnarray*}
		\lambda-\mu\notin\mathbb{Z}&: &(t^{-\mu}F_{\mu},t^{-\lambda},t^{-\lambda}F_{\lambda}),\\
		\lambda-\mu\in\mathbb{Z}_{+}&: &(t^{-\mu}F_{\mu},t^{-\lambda},t^{-\lambda}F_{\lambda})+\log t.t^{-\mu}(0,A\circ F_{\mu}),\\
		&&\text{where $A: T_{\mu}\rightarrow \mathbb{C}\times T_{\lambda}$ is an affine-linear map},\\
		\lambda-\mu\in\mathbb{Z}_{-}&: &(t^{-\mu}F_{\mu},t^{-\lambda},t^{-\lambda}F_{\lambda})+\log t.t^{-\lambda}(B\circ F_{\lambda},0,0),\\
		&&\text{where $B: T_{\lambda}\rightarrow T_{\mu}$ is an affine-linear map},\\
		\lambda=\mu&: &(t^{-\mu}F_{\mu},\log t.t^{-\mu}.\alpha\circ F_{\mu}),\\
		&&\text{where $\alpha: T_{\mu}\rightarrow \mathbb{C}$ with $\alpha(0)\neq 0$ is an affine-linear function}.
	\end{eqnarray*}
	When $\lambda-\mu\notin\mathbb{Z}$, the monodromy around $D_{z}$ is semisimple. For other cases, the monodromy is 
	semisimple if and only if the associated affine-linear map $A$ is zero (for $\lambda-\mu\in\mathbb{Z}_{+}$), 
	$B$ is zero (for $\lambda-\mu\in\mathbb{Z}_{-}$), or $\alpha$ is constant (for $\lambda=\mu$).
\end{proposition}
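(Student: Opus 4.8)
The plan is to verify the three (or four) conditions of Definition~\ref{def:connection} for the standard local model of the degenerating affine structure, and then to produce the stated coordinate normal forms by solving the resulting flat ODE system case by case. First I would work locally: fix $z\in D$, choose a coordinate $t$ cutting out $D$ together with coordinates on $D_z$, and write the logarithmic connection $\nabla$ on $\Omega_N(\log D)$ near $z$ as $d + \Theta$, where $\Theta$ is a matrix of logarithmic one-forms. The residue hypothesis says that $\mathrm{Res}_D\Theta$ preserves $\Omega_D$ with eigenvalue $\lambda$ on the quotient and eigenvalues in $\{\lambda,\mu\}$ on $\Omega_D$, and that this residue endomorphism is semisimple. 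So after a holomorphic change of frame I may assume the residue is the constant block-diagonal matrix $\mathrm{diag}(\lambda;\mu I;\lambda I)$ acting on the splitting $\mathbb{C}\,dt/t \oplus \Omega_{T_\mu}\oplus\Omega_{T_\lambda}$ (the first block corresponding to the $\mathcal{O}_D$-quotient direction, i.e. the $t$-variable, the second to the $T_\mu$-factor, the third to the $T_\lambda$-factor).

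Next I would pass from the connection to the developing map. Flat sections of the connection on $\Omega_N$ (equivalently, on the associated affine bundle $A$) near $z$ are, after the gauge normalization above, governed by an equation whose leading term is $\nabla = d + \mathrm{Res}\cdot\frac{dt}{t} + (\text{holomorphic})$; by a further holomorphic gauge transformation one reduces to the \emph{constant} residue form, so the monodromy around $D_z$ is $\exp(-2\pi i\,\mathrm{Res})$ and the flat frame is $t^{-\mathrm{Res}}$ times a single-valued holomorphic frame. Integrating the affine-linear coordinates against this flat frame produces the candidate developing map $(t^{-\mu}F_\mu,\,t^{-\lambda},\,t^{-\lambda}F_\lambda)$ in the generic non-resonant case $\lambda-\mu\notin\mathbb{Z}$, where $F_\mu,F_\lambda$ are the holomorphic functions recording the components in the $T_\mu,T_\lambda$ directions. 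The point is that when $\lambda-\mu\notin\mathbb{Z}$ the residue eigenvalues do not differ by integers across the two nontrivial blocks, so there is no logarithmic mixing, and the submersions $F_\mu:N_z\to T_\mu$, $F_\lambda:N_z\to T_\lambda$ can be taken to be coordinate projections; this simultaneously gives the chart $(F_\mu,t,F_\lambda)$ claimed.

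The resonant cases $\lambda-\mu\in\mathbb{Z}_{\pm}$ and $\lambda=\mu$ are where the real work is, and this is the step I expect to be the main obstacle. When two residue eigenvalues differ by a positive integer (resp. are equal), the connection cannot in general be gauged to constant residue: the obstruction is exactly an off-diagonal nilpotent term, which upon integration contributes a $\log t$ summand. Concretely, for $\lambda-\mu\in\mathbb{Z}_+$ the obstruction lives in $\mathrm{Hom}(T_\mu, \mathbb{C}\times T_\lambda)$, giving the extra term $\log t\cdot t^{-\mu}(0,A\circ F_\mu)$ with $A$ affine-linear; for $\lambda-\mu\in\mathbb{Z}_-$ it lives in $\mathrm{Hom}(T_\lambda,T_\mu)$, giving $\log t\cdot t^{-\lambda}(B\circ F_\lambda,0,0)$; and for $\lambda=\mu$ the rank-one dilatation direction forces the term $\log t\cdot t^{-\mu}\cdot\alpha\circ F_\mu$ with $\alpha(0)\neq 0$ precisely because $h(E,E)$, equivalently the pairing in that direction, is nowhere zero. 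I would handle all three uniformly by the theory of the Deligne canonical extension / Levelt normal form for a connection with a single logarithmic pole: decompose the residue into its semisimple part (already diagonalized by hypothesis) plus the nilpotent contribution coming from the integer-difference blocks, integrate the $2\times 2$ (or $1\times 1$) model system $\frac{df}{dt} = \big(\frac{N}{t} + \text{hol.}\big)f$ explicitly, and absorb the holomorphic corrections into the holomorphic coordinates $F_\mu,F_\lambda$. Finally, the monodromy statement is immediate from these normal forms: $\lambda-\mu\notin\mathbb{Z}$ forces diagonalizable (semisimple) monodromy $\exp(-2\pi i\,\mathrm{Res})$; in the resonant cases the monodromy has a nontrivial unipotent part exactly when the $\log t$ coefficient — i.e. $A$, $B$, or the non-constant part of $\alpha$ — is nonzero, which is what must be checked by inspecting the explicit solution.
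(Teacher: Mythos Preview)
Your proposal is correct and is exactly the Levelt-normal-form argument for a regular singular connection that underlies Proposition~1.10 of Couwenberg--Heckman--Looijenga, which is all the paper's own proof consists of (it simply cites that result with the exponents $0,\lambda$ replaced by $\mu,\lambda$). One small slip: your justification of $\alpha(0)\neq 0$ in the $\lambda=\mu$ case via ``$h(E,E)$ nowhere zero'' is misplaced, since no Hermitian form is assumed in this proposition---the condition instead comes from the requirement that the developing map be a local immersion, so that the $t$-direction (the $dt/t$ line carrying the quotient eigenvalue $\lambda$) is faithfully recorded.
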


\begin{proof}
	The same argument as in the proof for Proposition 1.10 of \cite{Couwenberg-Heckman-Looijenga}, as long as 
	we replace the exponents $\lambda$ and $0$ thereof by $\lambda$ and $\mu$ for here.
\end{proof}

\begin{remark}
	We can see from the above local model that for the projectivized developing map 
	(also called the \emph{projective developing map} in what follows) 
	only the relative exponent (i.e., exponent difference) matters. 
	That is, for instance, 
	$(t^{-\mu}F_{\mu},t^{-\lambda},t^{-\lambda}F_{\lambda})=t^{-\mu}(F_{\mu},t^{-(\lambda-\mu)},t^{-(\lambda-\mu)}F_{\lambda})$.
\end{remark}

Let us give a more geometric interpretation for this proposition. Assuming the same as above, 
then there is a canonical decomposition for germs, $D_{z}=D_{z,\mu}\times D_{z,\lambda}$, which is induced by the eigenspaces 
decomposition of the residue endomorphism restricted to the tangent bundle of $D$. The first factor is endowed with a natural 
affine structure and the second factor with a natural projective structure. This shows that the hypersurface $D$ locally 
looks like an exceptional divisor of a blowup which around $z$ is the projection $D_{z}\rightarrow D_{z,\mu}$.

Now we need to understand what happens in case $D$ is a normal crossing divisor 
in the complex manifold $N$ and the affine
structure on $N-D$ degenerates infinitesimally simply along each irreducible component of $D$.

For that let us first look at the case when $D$ has exactly two smooth irreducible components $D_{1}$ and $D_{2}$. 
Denote by $\rho_{1}$ (resp. $\rho_{2}$) the residue endomorphism along $D_{1}$ (resp. $D_{2}$). Let $z$ be a point in 
$K:=D_{1}\cap D_{2}$. Under the assumption of infinitesimally simple degeneration, the local eigenvalue foliations of 
$D_{1}-K$ induced by $\rho_{1}$ and those foliations coming from $D_{2}-K$ induced by $\rho_{2}$ can extend across $K$ in 
a compatible way. For instance, the affine structure of $D_{1,z,\mu_{1}}$ has an infinitesimally simple degeneration along 
$D_{1,z,\mu_{1}}\cap D_{2}$ with exponent $\mu_{2}$ or $\lambda_{2}$. Therefore we have the eigenvalue pairs of 
$(\rho_{1},\rho_{2})$ in $\Omega_{N}(\log D)\otimes \mathbb{C}_{z}$ lying in $\{\mu_{1},\lambda_{1}\}\times \{\mu_{2},\lambda_{2}\}$. 
However, we only need to deal with the special case for which the pair $(\mu_{1},\lambda_{2})$ does not occur. That is because 
for our applications $D$ looks like an exceptional divisor of an iterated blowup, namely, the irreducible components of $D$ 
at $z$ always come with an order: taking our case at hand for example, $D_{2}$ comes after $D_{1}$. In particular, 
$\lambda_{1}-\mu_{1}\neq 0$.

The nonoccurence of $(\mu_{1},\lambda_{2})$ can also be interpreted in a geometric way. Since the affine-linear elements 
in $\mathcal{O}_{N}|_{D_{1}-K}$ are invariant under the holonomy of $D_{2}$, we can define an affine retraction 
$r_{1}: N_{z}\rightarrow D_{1,z,\mu_{1}}$, and likewise $r_{2}: N_{z}\rightarrow D_{2,z,\mu_{2}}$. Note that they commute. 
We can say more if $(\mu_{1},\lambda_{2})$ does not occur: the local affine factor of $D_{1}$ as a quotient of 
its ambient germ remains affine at $z$: namely, $r_{1}r_{2}=r_{1}$.

We can still use the local model to write out the developing map near $z$. There exists a local equation $t_{1}$ (resp. $t_{2}$) for 
$D_{1}$ (resp. $D_{2}$). When $\lambda_{2}-\mu_{2}\neq 0$, There exists a local chart 
$(F_{0},t_{1},F_{1},t_{2},F_{2}):N_{p}\rightarrow (T_{0}\times\mathbb{C}\times T_{1}\times\mathbb{C}\times T_{2})_{(0,0,0,0,0)}$ 
near $z$, where $T_{i}$ is a vector space, such that the developing map is affine-equivalent to 
\begin{multline*}
	t_{1}^{-\mu_{1}}t_{2}^{-\mu_{2}}\big(F_{0},t_{1}^{-(\lambda_{1}-\mu_{1})}(1,F_{1}), 
	t_{1}^{-(\lambda_{1}-\mu_{1})}t_{2}^{-(\lambda_{2}-\mu_{2})}(1,F_{2}) \big): \\
	: N_{z}\rightarrow T_{0}\times(\mathbb{C}\times T_{1})\times(\mathbb{C}\times T_{2}).
\end{multline*}
When $\lambda_{2}-\mu_{2}= 0$, There exists a local chart 
$(F_{0},t_{1},F_{1},t_{2}):N_{p}\rightarrow (T_{0}\times\mathbb{C}\times T_{1}\times\mathbb{C})_{(0,0,0,0)}$ 
near $z$, where $T_{i}$ is a vector space, such that the developing map is affine-equivalent to 
\begin{multline*}
	t_{1}^{-\mu_{1}}t_{2}^{-\mu_{2}}\big(F_{0},t_{1}^{-(\lambda_{1}-\mu_{1})}(1,F_{1},\log t_{2}) \big): 
	N_{z}\rightarrow T_{0}\times(\mathbb{C}\times T_{1}\times\mathbb{C}),
\end{multline*}
where $F_{2}$ (resp. $T_{2}$) is incorporated into $F_{1}$ (resp. $T_{1}$).

The above discussion can be generalized to the following proposition for a normal crossing divisor $D$ in a straightforward manner.

\begin{proposition}
	Let $D$ be a simple normal crossing divisor in a complex manifold $N$, whose irreducible components
	$D_{1},\dots,D_{k}$ ($k\geq 2$) are smooth. An affine structure on $N-D$ is endowed with infinitesimally simply degeneration
	along $D_{i}$ of logarithmic exponents $\mu_{i}$ and $\lambda_{i}$.
	Suppose
	that $\lambda_{i}-\mu_{i}>-1$ and that the holonomy around $D_{i}$ is semisimple unless $\lambda_{i}-\mu_{i}=0$.
	Suppose also that for any pair $(i,j)$, the local affine quotient of the generic point of $D_{i}$ extends across the 
	generic point of $D_{i}\cap D_{j}$.
	
	Let $z$ be a point of $\cap D_{i}$.
	Then $\lambda_{i}-\mu_{i}\neq 0$ for any $i\neq m$ for some $m$ and the
	local affine retraction $r_{i}$ at the generic point of $D_{i}$ extends to $r_{i}: N_{z}\rightarrow D_{i,\mu_{i}}$ in such a manner
	that $r_{i}r_{j}=r_{i}$ for $i<j$.
	
	Furthermore, there exists a local equation $t_{i}$ for each $D_{i}$ and when $\lambda_{i}-\mu_{i}\neq 0$ for any $i$ (resp. 
	$\lambda_{i}-\mu_{i}\neq 0$ for any $i$ but $m$) a morphism to a vector space $F_{i}:N_{z}\rightarrow T_{i}$ 
	with $F_{i}(z)=0$ for $i=0,1,\dots,k$ such that 
	$(F_{0},(t_{i},F_{i})_{i=1}^{k})$ (resp. $(F_{0},(t_{i},F_{i})_{i=1}^{m-1},t_{m},(t_{i},F_{i})_{i=m+1}^{k})$) defines a local 
	chart for $N_{z}$ 
	and the developing map is affine-equivalent to the following multivalued map
	\begin{multline*}
		t_{1}^{-\mu_{1}}\cdots t_{k}^{-\mu_{k}}\Big(F_{0},\big(t_{1}^{-(\lambda_{1}-\mu_{1})}\cdots 
		t_{i}^{-(\lambda_{i}-\mu_{i})}(1,F_{i})\big)_{i=1}^{k} \Big): \\
		: N_{z}\rightarrow T_{0}\times\prod_{i=1}^{k}(\mathbb{C}\times T_{i}) \quad \text{resp.}  \\
		t_{1}^{-\mu_{1}}\cdots t_{k}^{-\mu_{k}}\Big(F_{0},\big(t_{1}^{-(\lambda_{1}-\mu_{1})}\cdots 
		t_{i}^{-(\lambda_{i}-\mu_{i})}(1,F_{i})\big)_{i=1}^{m-1},t_{1}^{-(\lambda_{1}-\mu_{1})}\cdots 
		t_{m-1}^{-(\lambda_{m-1}-\mu_{m-1})}\log t_{m}, \\ 
		,\big(t_{1}^{-(\lambda_{1}-\mu_{1})}\cdots t_{i}^{-(\lambda_{i}-\mu_{i})}(1,F_{i})\big)_{i=m+1}^{k} \Big): \\
		: N_{z}\rightarrow T_{0}\times\prod_{i=1}^{m-1}(\mathbb{C}\times T_{i})\times\mathbb{C}\times \prod_{i=m+1}^{k}(\mathbb{C}\times T_{i}).
	\end{multline*}
\end{proposition}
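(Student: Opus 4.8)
The plan is to argue by induction on the number $k$ of components, the case $k=2$ being exactly the computation carried out just before the statement; the inductive step is most transparently organized through the simultaneous residue eigenspaces at $z$, and for $k=2$ it recovers that computation. First I would record that, since $D$ is simple normal crossing and $\nabla$ is flat on $\Omega_{N}(\log D)$, the residue endomorphisms $\rho_{1},\dots,\rho_{k}$ at $z$ pairwise commute. By conditions (2) and (3) in the definition of an infinitesimally simple degeneration, each $\rho_{i}$ restricted to $\Omega_{D_{i}}$ is semisimple with eigenvalues in $\{\mu_{i},\lambda_{i}\}$ and its eigenvalue on the $\mathcal{O}_{D_{i}}$-quotient is $\lambda_{i}$; combined with the assumed semisimplicity of the holonomy whenever $\lambda_{i}-\mu_{i}\neq 0$ (a variant of condition (4)), this forces $\rho_{i}$ itself to be semisimple, with eigenvalues in $\{\mu_{i},\lambda_{i}\}$, except possibly for a single size-$2$ unipotent Jordan block attached to the coinciding eigenvalue in the case $\lambda_{i}=\mu_{i}$. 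Hence the semisimple residues among $\rho_{1},\dots,\rho_{k}$ can be simultaneously diagonalized, decomposing the germ $N_{z}$ into joint eigenspaces indexed by tuples in $\prod_{i}\{\mu_{i},\lambda_{i}\}$, with the remaining (at most one) residue acting on each.

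Next I would cut down the set of tuples that actually occur, and this is where the ordered compatibility does the work. Applying the two-component analysis preceding the statement at a generic point of each codimension-two stratum $D_{i}\cap D_{j}$ (with $i<j$ in the blow-up order), the hypothesis that the local affine quotient of the generic point of $D_{i}$ extends across $D_{i}\cap D_{j}$ is precisely the non-occurrence of the eigenvalue pair $(\mu_{i},\lambda_{j})$ for $(\rho_{i},\rho_{j})$; propagating this to the deeper strata $\bigcap D_{i}$ (by the inductive hypothesis applied at shallower strata, or equivalently by extending the affine retractions over the codimension-$\ge 2$ loci) forces, on any joint eigenvector, the set of indices acting by $\lambda$ to be an initial segment $\{1,\dots,r\}$ of the order. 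Thus only the $k+1$ joint eigenspaces $V_{0},\dots,V_{k}$ can occur, $V_{r}$ carrying the tuple $(\lambda_{1},\dots,\lambda_{r},\mu_{r+1},\dots,\mu_{k})$; the same discussion pins down that $\lambda_{i}-\mu_{i}\neq 0$ for all $i$ but at most one index $m$ (a second coincidence would propagate overlapping unipotent parts of two distinct residues through the eigenspace chain, which the ordered compatibility of the affine quotients rules out, exactly as in the two-component case), and it identifies the affine retraction $r_{i}$ with the projection onto the $\mu_{i}$-eigendirections $\bigoplus_{r<i}V_{r}$, whence $r_{i}r_{j}=r_{i}$ for $i<j$ because $\bigoplus_{r<i}V_{r}\subseteq\bigoplus_{r<j}V_{r}$.

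Finally I would read off the developing map. On $V_{r}$ the local monodromy around $D_{\ell}$ is multiplication by $\exp(-2\pi\sqrt{-1}\,\lambda_{\ell})$ for $\ell\le r$ and by $\exp(-2\pi\sqrt{-1}\,\mu_{\ell})$ for $\ell>r$, together with a unipotent factor on the one chain meeting $D_{m}$; hence the corresponding block of the developing map equals $t_{1}^{-\lambda_{1}}\cdots t_{r}^{-\lambda_{r}}t_{r+1}^{-\mu_{r+1}}\cdots t_{k}^{-\mu_{k}}$ times a single-valued holomorphic map, plus, when $\lambda_{m}=\mu_{m}$, the extra $\log t_{m}$-term carried by the block $D_{m-1}$. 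Rewriting $t_{1}^{-\lambda_{1}}\cdots t_{r}^{-\lambda_{r}}t_{r+1}^{-\mu_{r+1}}\cdots t_{k}^{-\mu_{k}}=(t_{1}^{-\mu_{1}}\cdots t_{k}^{-\mu_{k}})\,t_{1}^{-(\lambda_{1}-\mu_{1})}\cdots t_{r}^{-(\lambda_{r}-\mu_{r})}$, pulling the common factor $t_{1}^{-\mu_{1}}\cdots t_{k}^{-\mu_{k}}$ out front, normalizing the single-valued map on $V_{0}$ to $F_{0}$ and on each $V_{r}$ with $r\ge 1$ to the shape $(1,F_{i})$ by means of the induced affine chart on that block, and noting that the $t_{i}$ and the $F_{i}$ are functionally independent (from the simple normal crossing condition together with the developing map being a local isomorphism off $D$) so that $(F_{0},(t_{i},F_{i})_{i=1}^{k})$ — or the variant with $t_{m}$ in place of $(t_{m},F_{m})$ — is a genuine local chart for $N_{z}$, produces the two asserted forms of the developing map.

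The hard part is the middle step: pinning down the staircase structure of the joint eigenspaces and, inside it, the fact that at most one residue can be non-semisimple and the exact position of the index $m$. This is precisely where all the hypotheses are consumed — simple normal crossing (so that the residues commute), $\lambda_{i}-\mu_{i}>-1$ (to exclude the resonance-induced logarithmic terms for the semisimple residues, just as in the one-variable model), semisimple holonomy away from the exceptional index, and above all the ordered compatibility of the affine quotients — so I would expect to spend most of the effort there, arguing codimension-two stratum by codimension-two stratum and then bootstrapping along the blow-up order. Once the staircase is in place, the remaining assertions follow by the bookkeeping of the previous paragraph, entirely parallel to the two-component computation already given, which is why the generalization can fairly be called straightforward.
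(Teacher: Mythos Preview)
Your proposal is correct and is precisely the ``straightforward generalization'' the paper has in mind: the paper gives no proof beyond the sentence preceding the proposition, so your outline is in fact a detailed unpacking of the same approach --- commute the residues, simultaneously diagonalize, use the pairwise non-occurrence of $(\mu_{i},\lambda_{j})$ for $i<j$ to force the staircase structure of joint eigenspaces, then read off the developing map block by block exactly as in the $k=2$ computation. The one place worth sharpening is your justification that at most one index $m$ can have $\lambda_{m}=\mu_{m}$: in the paper's two-component discussion this is drawn directly from the blow-up order (``$D_{2}$ comes after $D_{1}$, in particular $\lambda_{1}-\mu_{1}\neq 0$''), so for general $k$ the cleanest argument is to apply that two-component conclusion to each adjacent pair $(i,i+1)$ rather than to invoke a vaguer ``propagation of unipotent parts''.
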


This proposition makes it possible to perform the compactifiaction defined by an arrangement, 
which will be introduced in the next section. For this operation to be done, we first need to carry out a sequence of iterated blowups
in terms of the \emph{intersection lattice} of the arrangement so as to get a big resolution.
Through this we will arrive at the situation mentioned in the proposition. Then we can contract those exceptional
divisors to get the desired compactification without worrying about the compatibility of their local structures.
We will apply this technique in Section $\ref{sec:GIT singularities}$ when we deal with the projective developing map 
near the given arrangement.

\section{Birational operations in terms of arrangements}\label{sec:birational-operation}

In this section we briefly review the work on the birational relationships between the GIT compactification and Baily-Borel 
compactification in terms of an arrangement for the ball quotients case, which was established in the seminal work of Looijenga 
\cite{Looijenga-2003a}.

Since the mild degenerations of a family of smooth algebraic varieties usually form a codimension $1$ subvariety so as to compactify 
the moduli space, which is closely related to a hyperplane arrangement, let us recall what an arrangement is in various situations.

Let $X$ be a connected complex-analytic manifold, and let $\mathscr{H}$ be a collection of smooth connected hypersurfaes of $X$. 
Then $(X,\mathscr{H})$ is said to be an \emph{arrangement} if the collection $\mathscr{H}$ is locally finite and every point of 
$X$ admits a coordinate neighborhood such that each $H\in\mathscr{H}$ meeting that neighborhood is given by a linear equation.
We define the \emph{arrangement complement} as $X^{\circ}:=X-\cup_{H\in\mathscr{H}}H$, i.e., the complement of the
union of the members of $\mathscr{H}$ in $X$. We will always use the superscript $^{\circ}$ to denote the complement of
an arrangement in an analogous situation as long as the arrangement is understood.

We denote by $\Lambda(\mathscr{H})$ the collection of irreducible components of intersections 
taken from subsets of $\mathscr{H}$; here we understand $X$ is also included in $\Lambda(\mathscr{H})$
as the intersection over the empty subset of $\mathscr{H}$, thus $\Lambda(\mathscr{H})$ forms a partially ordered set. 
And for $L\in\Lambda(\mathscr{H})$, we denote by $\mathscr{H}^{L}$ the collection of $H\in\mathscr{H}$ which passes through $L$. 
On the other hand, each $H\in\mathscr{H}-\mathscr{H}^{L}$ meets $L$ in a hypersurface of $L$. We denote the collection of 
these hypersurfaces of $L$ by $\mathscr{H}_{L}$. But we notice that the natural map 
$\mathscr{H}-\mathscr{H}^{L}\rightarrow\mathscr{H}_{L}$ needs not be injective, so that $\mathscr{H}-\mathscr{H}^{L}$ 
and $\mathscr{H}_{L}$ cannot be identified in general.

Given an arrangement $(X,\mathscr{H})$, if $L\in\Lambda(\mathscr{H})$, 
we want to trivialize the projectivized normal bundle of $L$ in $X$, for which the linearization of an arrangement is expected. 
Suppose that we are given a line bundle $\mathscr{L}$ on $X$ and for every $H\in\mathscr{H}$, an isomorphism 
$\mathscr{L}(H)\otimes\mathscr{O}_{H}\cong\mathscr{O}_{H}$ which also means an isomorphism between the normal bundle $\nu_{H/X}$ and the 
coherent restriction of the dual bundle $\mathscr{L}^{*}$ to $H$. Then the normal bundle $\nu_{L/X}$ is naturally realized 
as a subbundle of $\mathscr{L}^{*}\otimes\mathscr{O}_{L}\otimes_{\mathbb{C}}\mathbb{C}^{\mathscr{H}^{L}}$ 
by the following homomorphism 
\[
\nu_{L/X}\rightarrow\bigoplus_{H\in\mathscr{H}^{L}}\nu_{H/X}\otimes\mathscr{O}_{L}\cong
\mathscr{L}^{*}\otimes\mathscr{O}_{L}\otimes_{\mathbb{C}}\mathbb{C}^{\mathscr{H}^{L}}.
\]
Therefore, a \emph{linearization} of an arrangement $(X,\mathscr{H})$ consists of, as above, the data of a line bundle $\mathscr{L}$ 
on $X$ and for every $H\in\mathscr{H}$, an isomorphism $\mathscr{L}(H)\otimes\mathscr{O}_{H}\cong\mathscr{O}_{H}$ such that, for 
any irreducible component $L$ of an intersection of members of $\mathscr{H}$, the image of the above embedding 
$\nu_{L/X}\rightarrow\mathscr{L}^{*}\otimes\mathscr{O}_{L}\otimes_{\mathbb{C}}\mathbb{C}^{\mathscr{H}^{L}}$ is equal to 
$\mathscr{L}^{*}\otimes N(L,X)$ for some linear subspace $N(L,X)$ of $\mathbb{C}^{\mathscr{H}^{L}}$. We then call 
$N(L,X)$ the \emph{normal space} of $L$ in $X$, and its projectivization $\mathbb{P}(L,X):=\mathbb{P}(N(L,X))$ the \emph{projectively 
normal space} of $L$ in $X$.

There are various arrangements in different settings. A basic example is that when $X$ is a projective space $\mathbb{P}^{n}$, 
$\mathscr{H}$ is a finite collection of projective hyperplanes, and $\mathscr{L}:=\mathscr{O}_{\mathbb{P}^{n}}(-1)$ is the 
tautological line bundle. This is called a \emph{projective arrangement}.

We are also concerned with the arrangement on a ball quotient in this 
paper, which descends from a so-called \emph{complex ball arrangement}: 
Let $W$ be a complex vector space of dimension $n+1$ equipped with a Hermitian form 
$h:W\times W\rightarrow\mathbb{C}$ of signature $(1,n)$, then the complex ball $\mathbb{B}$ is the open subset in $\mathbb{P}(W)$ 
defined by $\mathbb{B}:=\{[z]\in\mathbb{P}(W)\mid h(z,z)>0\}$, which is of complex dimension $n$. Let the line bundle 
$\mathscr{L}:=\mathscr{O}_{\mathbb{B}}(-1)$ be the restriction of the tautological line bundle $\mathscr{O}_{\mathbb{P}(W)}(-1)$ 
to $\mathbb{B}$. This is also the natural automorphic line bundle over $\mathbb{B}$. 
On the other hand, the complex ball $\mathbb{B}$ is also the Hermitian symmetric domain associated to the special unitary group 
$\mathrm{SU}(h)$, and the group $\mathrm{SU}(h)$ acts on the line bundle $\mathscr{O}_{\mathbb{B}}(-1)$ as well.
Every hyperplane $H\subset W$ of hyperbolic signature gives rise to a hyperplane section $\mathbb{B}_{H}:=\mathbb{P}(H)\cap\mathbb{B}$ 
of $\mathbb{B}$. As before, $\mathbb{B}_{H}$ is the Hermitian symmetric domain associated to the special unitary group $\mathrm{SU}(H)$ 
of $H$. With respect to the corresponding hyperbolic metric on $\mathbb{B}$ descended from $h$, $\mathbb{B}_{H}$ is a totally 
geodesic hypersurface of $\mathbb{B}$ and any such hypersurface is of this from. Notice that the normal bundle of $\mathbb{B}_{H}$ 
is naturally isomorphic to $\mathscr{O}_{\mathbb{B}_{H}}(1)\otimes_{\mathbb{C}}W/H$. Hence we have an $\mathrm{SU}(H)$-equivariant 
isomorphism 
\[
\mathscr{O}_{\mathbb{B}}(-1)(\mathbb{B}_{H})\otimes\mathscr{O}_{\mathbb{B}_{H}}\cong
\mathscr{O}_{\mathbb{B}_{H}}\otimes_{\mathbb{C}}W/H\cong\mathscr{O}_{\mathbb{B}_{H}}.
\]
The local finite collection of hyperplane sections of $\mathbb{B}$ is automatically satisfied in an arithmetic setting. 
Suppose that $(W,h)$ is defined over an imaginary quadratic extension $k$ of $\mathbb{Q}$, which is thought of as a subfield 
of $\mathbb{C}$. So we are given a $k$-vector space $W(k)$ equipped with a Hermitian form $h(k):W(k)\times W(k)\rightarrow k$ 
such that after scalar extension we have $(W,h)$. We say that a subgroup of $\mathrm{SU}(h)(k)$ is \emph{arithmetic} if it is 
commensurable with the group $\mathrm{SU}(h)(\mathcal{O}_{k})$ where $\mathcal{O}_{k}$ denotes the ring of integers of $k$. 
It is known that an arithmetic subgroup of $\mathrm{SU}(h)(k)$ acts properly discontinuously on $\mathbb{B}$. We say that 
a collection $\mathscr{H}$ of hyperbolic hyperplanes of $W$ is \emph{arithmetically defined} if $\mathscr{H}$, regarded as a subset 
of $W^{*}$, is a finite union of orbits of an arithmetic subgroup of $\mathrm{SU}(h)(k)$, and each point of $\mathscr{H}$ is 
defined over $k$. In that case the corresponding collection of hyperplane sections $\mathscr{H}|\mathbb{B}$ of $\mathbb{B}$ is 
locally finite.

\subsection{Birational operations on arrangements}
Suppose that $X$ is a variety. Let $(X,\mathscr{H})$ be an arrangement, 
the arrangement $\mathscr{H}$ is in general not simple normal crossing. 
There is, however, a straightforward way to find a birational modification $\tilde{X}^{\mathscr{H}}\rightarrow X$ of $X$ 
such that the preimage of the arrangement is a simple normal crossing divisor. Indeed, the modification $\tilde{X}^{\mathscr{H}}$ 
is obtained by blowing up the members of $\Lambda(\mathscr{H})$ in their natural partial order: 
we first blow up the dimension $0$ members, then the strict transform of the dimension $1$ members, 
and so on, until the strict transform of dimension $n-2$ members: 
\[
X=X_{0}\leftarrow X_{1}\leftarrow\cdots\leftarrow X_{n-2}=\tilde{X}^{\mathscr{H}}.
\]
We refer to $\tilde{X}^{\mathscr{H}}$ as the blow-up of $X$ defined by the arrangement $\mathscr{H}$. We may also write $\tilde{X}$ 
for the corresponding blow-up $\tilde{X}^{\mathscr{H}}$ of $X$ if the arrangement $\mathscr{H}$ on $X$ is understood. For other 
analogous notations which a priori depend on $\mathscr{H}$, we often as well suppress it if the arrangement is understood. 
We will identify $X^{\circ}$ with its preimage in $\tilde{X}$.

If $\mathscr{L}$ is an invertible sheaf on $X$, then we write $\mathscr{L}(\mathscr{H})$ for the subsheaf 
$\sum_{H\in\mathscr{H}}\mathscr{L}(H)$ of the sheaf of rational sections of $\mathscr{L}$. This sheaf is a coherent 
$\mathscr{O}_{X}$-module but need not be invertible. The morphism $f:\tilde{X}\rightarrow X$ is obtained by blowing up 
the fractional ideal sheaf $\mathscr{O}(\mathscr{H})$.

Now every $L\in\Lambda(\mathscr{H})$ determines an exceptional divisor $E(L)$ and all these 
exceptional divisors form a simple normal crossing divisor in $\tilde{X}$. Since $\mathscr{H}_{L}$ defines an arrangement 
in $L$, we have defined $\tilde{L}$ and $L^{\circ}$ in the same manner as above, 
where $\tilde{L}$ denotes the strict transform of $L$ until before blowing up itself. 
Likewise, the members of $\mathscr{H}^{L}$ give rise to a projective arrangement in $\mathbb{P}(L,X)$, 
so we have defined $\tilde{\mathbb{P}}(L,X)$ and $\mathbb{P}(L,X)^{\circ}$ as well. Then the divisor $E(L)$ 
has a product decomposition as follows
\[
E(L)=\tilde{L}\times\tilde{\mathbb{P}}(L,X).
\]
It contains an open-dense subset $E(L)^{\circ}$ identified with 
\[
E(L)^{\circ}=L^{\circ}\times\mathbb{P}(L,X)^{\circ}.
\]

Moreover, the divisors $E(L)$ also determine a stratification of $\tilde{X}$. An arbitrary stratum can be described as follows: 
the intersection $\cap_{i}E(L_{i})$ is nonempty if and only if these members make up a flag: 
$L_{\bullet}:=L_{0}\subset L_{1}\subset \cdots \subset L_{k}\subset L_{k+1}=X$. Their intersection $E(L_{\bullet})$ 
can be identified with the product 
\[
E(L_{\bullet})=\tilde{L}_{0}\times\tilde{\mathbb{P}}(L_{0},L_{1})\times\cdots\times\tilde{\mathbb{P}}(L_{k},X),
\]
and it contains an open-dense stratum $E(L_{\bullet})^{\circ}$ decomposed as follows 
\[
E(L_{\bullet})^{\circ}=L_{0}^{\circ}\times\mathbb{P}(L_{0},L_{1})^{\circ}\times\cdots\times\mathbb{P}(L_{k},X)^{\circ}.
\]

We notice that the above blow-up procedure turns the arrangement into a simple normal crossing divisor, but it is clearly 
not minimal with respect to that property.

Suppose that $X$ is compact, that some positive power of $\mathscr{L}(\mathscr{H})$ is generated by its sections, and that 
these sections separate the points of $X^{\circ}$. Then so are $\tilde{X}$ and the pullback $\pi^{*}\mathscr{L}(\mathscr{H})$ 
on $\tilde{X}$, and the latter is a semiample invertible sheaf. A positive power of this sheaf defines a morphism from $\tilde{X}$ 
to a projective space, which will be constant on the fibers of the projections $E(L)\rightarrow \tilde{\mathbb{P}}(L,X)$. 
If we write $R$ as the equivalence relation on $\tilde{X}$ generated by these projections, then the morphism will factor through 
the quotient space $\tilde{X}/R$. On a stratum $E(L_{0}\subset\cdots\subset L_{k}\subset X)^{\circ}$ we note that the 
equivalence relation is defined by the projection to the last factor $\mathbb{P}(L_{k},X)^{\circ}$. So the quotient space 
$\tilde{X}/R$ is the disjoint union of $X^{\circ}$ and the projective arrangement complements $\mathbb{P}(L,X)^{\circ}$ 
for $L\in\Lambda(\mathscr{H})$, and 
the corresponding projective contraction $\tilde{X}\rightarrow \hat{X}:=\tilde{X}/R$ has the property that $\hat{X}-X^{\circ}$ 
is naturally stratified into subvarieties indexed by $\Lambda(\mathscr{H})$: to $L$ corresponds a copy of 
$\mathbb{P}(L,X)^{\circ}$. This assignment clearly reverses the order relation.

If $Y$ is a normal subvariety of $X$ which meets the members of $\Lambda(\mathscr{H})$ in a transversal way, the strict 
transform of $Y$ in $\hat{X}$ depends only on the restriction of the arrangement to $Y$. This tells us that the smoothness 
condition can be weakened to normality.

Now let $X$ be a normal connected (hence irreducible) variety. A \emph{linearized 
arrangement} on $X$ consists of a locally finite collection of (reduced) Cartier divisors $\{H\}_{H\in\mathscr{H}}$ on $X$, 
a line bundle $\mathscr{L}$, and for each $H\in\mathscr{H}$ an isomorphism 
$\mathscr{L}(H)\otimes\mathscr{O}_{H}\cong\mathscr{O}_{H}$ such that for every irreducible component $L$ of an intersection 
of members of $\mathscr{H}$ the following conditions are satisfied: (i) the natural homomorphism 
$\bigoplus_{H\in\mathscr{H}^{L}}\mathscr{I}_{H}/\mathscr{I}_{H}^{2}\otimes\mathscr{O}_{L}\rightarrow 
\mathscr{I}_{L}/\mathscr{I}_{L}^{2}$ is surjective and not identically zero on any summand; (ii) if we identify 
$\bigoplus_{H\in\mathscr{H}^{L}}\mathscr{I}_{H}/\mathscr{I}_{H}^{2}\otimes\mathscr{O}_{L}$ with 
$\mathscr{L}\otimes\mathscr{O}_{L}\otimes_{\mathbb{C}}\mathbb{C}^{\mathscr{H}^{L}}$ via the given isomorphisms, then 
the kernel of the above homomorphism is spanned by a subspace $K(L,X)\subset\mathbb{C}^{\mathscr{H}^{L}}$ whose codimension 
is that of $L$ in $X$. These conditions imply that the conormal sheaf $\mathscr{I}_{L}/\mathscr{I}_{L}^{2}$ is a locally 
free $\mathscr{O}_{L}$-module of rank equal to the codimension of $L$ in $X$. The normal space $N(L,X)$ is then defined as the 
space of linear forms on $\mathbb{C}^{\mathscr{H}^{L}}$ which vanish on $K(L,X)$. Now with these conditions the 
preceding discussion also holds for $X$ being a normal variety.

\subsection{Complex ball arrangements}
It is clear that the discussion of the preceding subsection does not cover the situation of complex ball arrangements, so we 
will deal with this case in this subsection.

Let $\Gamma$ be an arithmetic subgroup of $\mathrm{SU}(h)(k)$ and assume that $\Gamma$ is \emph{neat}, by which we mean the 
eigenvalues of elements of $\Gamma$ generate a torsion-free multiplicative subgroup of $\mathbb{C}^{\times}$. We here write 
$X$ for the $\Gamma$-orbit space of $\mathbb{B}$. A \emph{cusp} of $\mathbb{B}$ relative to the given $k$-structure is an 
element in the boundary of $\mathbb{B}$ in $\mathbb{P}(W)$ defined over $k$. Namely, a cusp corresponds to an isotropic line 
$I$ in $W$ defined over $k$.

Let $\mathbb{L}^{\times}:=\{w\in W\mid h(w,w)>0\}$. So the obvious projection 
$\mathbb{L}^{\times}\rightarrow\mathbb{B}$ is a principal $\mathbb{C}^{\times}$-bundle. Then the automorphic line bundle 
$\mathbb{L}\rightarrow\mathbb{B}$ is the line bundle associated to the principal $\mathbb{C}^{\times}$-bundle 
$\mathbb{L}^{\times}\rightarrow\mathbb{B}$ together with the representation 
$\mathbb{C}^{\times}\rightarrow \mathrm{GL}_{1}(\mathbb{C}),z\mapsto 1/z$, denoted by 
\[
\mathbb{L}:=(\mathbb{L}^{\times}\times^{\mathbb{C}^{\times}}\mathbb{C})
\rightarrow\mathbb{B}.
\]
A holomorphic form on $\mathbb{B}$ of weight $m$, i.e., a holomorphic section of $\mathbb{L}^{\otimes m}$, can be viewed as 
a holomorphic function on $\mathbb{L}^{\times}$ which is homogeneous of degree $-m$, and vice versa. Since $\Gamma$ is neat, 
$\mathbb{L}$ drops to a line bundle over $X$: 
\[
\mathbb{L}/\Gamma\rightarrow X=\mathbb{B}/\Gamma.
\]

Given an isotropic line $I$ in $W$, the space $\mathbb{P}(W)-\mathbb{P}(I^{\perp})$ is the complement of a projective 
hyperplane in a projective space, hence an affine space with translation group $\mathrm{Hom}(W/I^{\perp},I^{\perp})$. 
Consider the fibration
\[
\mathbb{P}(W)-\mathbb{P}(I^{\perp})\rightarrow
\mathbb{P}(W/I)-\mathbb{P}(I^{\perp}/I)
\]
with fibers affine lines. The base $\mathbb{P}(W/I)-\mathbb{P}(I^{\perp}/I)$ is an affine space over 
$\mathrm{Hom}(W/I^{\perp},I^{\perp}/I)$, denoted by $\mathbb{B}(I)$. The domain $\mathbb{B}$ is fibered over 
$\mathbb{B}(I)$ with fibers half-lines.

We can describe $\mathbb{B}$ as such in terms of (partial) coordinates: if $e$ is a generator of $I$, choose $c\in W$ 
with $h(c,c)=0$ such that $h(e,c)=\sqrt{-1}/2$. The orthogonal complement $A:=\{e,c\}^{\perp}$ of the span of $e$ and 
$c$ is a negative definite inner product space, which can be identified with $I^{\perp}/I$. We then have an isomorphism 
between the two affine spaces $\mathbb{C}\times A$ and $\mathbb{P}(W)-\mathbb{P}(I^{\perp})$, realized by the map 
$(s,a)\in\mathbb{C}\times A\mapsto [se+c+a]\in\mathbb{P}(W)-\mathbb{P}(I^{\perp})$, and in these terms $\mathbb{B}$ is 
defined by the condition $\mathrm{Im}(s)>h(a,a)$. This is the realization of $\mathbb{B}$ as a unbounded Siegel domain 
of the second kind.

Let $J$ be a degenerate subspace of $W$ containing the isotropic line $I$, so that we have $I=J\cap J^{\perp}$ and 
$I^{\perp}=J+J^{\perp}$. Then we write 
\[
\mathbb{L}^{\times}(J):=W/J-I^{\perp}/J \qquad \text{and} \qquad 
\mathbb{B}(J):=\mathbb{P}(W/J)-\mathbb{P}(I^{\perp}/J).
\]
Let $N_{I}$ denote the unipotent radical of the stabilizer group $\mathrm{SU}(h)_{I}$ of $I$. It is known to be a 
Heisenberg group. The group $N_{J}$ of elements of $N_{I}$ that act as the identity on $J^{\perp}$ form a normal 
Heisenberg subgroup of $N_{I}$ for which the quotient $N_{I}/N_{J}$ can be identified with the vector group 
$I^{\perp}/J\otimes\bar{I}$. This vector group may be regarded as the vector space of translations of the affine 
space $\mathbb{B}(J)=\mathbb{P}(W/J)-\mathbb{P}(I^{\perp}/J)$. Let us look at their discrete counterparts, the similar 
statements hold for $\Gamma$ as well. Denote by $\Gamma_{I}$ the stabilizer $\Gamma\cap\mathrm{SU}(h)_{I}$ in 
$\Gamma$ of $I$, let $\Gamma_{J}:=\Gamma_{I}\cap N_{J}$, then $\Gamma_{I}/\Gamma_{J}$ can be identified with a lattice 
in $I^{\perp}/J\otimes\bar{I}$. So if we denote the orbit space of this lattice acting on $\mathbb{B}(J)$ by $X(J)$, 
then $X(J)$ is a principal homogeneous space of a complex torus with universal cover $I^{\perp}/J\otimes\bar{I}$. 
Since $I\subset J\subset I^{\perp}$, we have a natural projection $X(I)\rightarrow X(J)\rightarrow X(I^{\perp})$ of 
abelian torsors.

In order to describe a compactification of $X$ in a uniform way, let us introduce an \emph{intermediate system} which 
assigns to every $k$-isotropic line $I$ a degenerate subspace $J(I)$ defined over $k$ with radical $I$ in a 
$\Gamma$-equivariant manner. The two extremal systems, $BB(I):=I^{\perp}$ and $tor(I):=I$, are called the 
\emph{Baily-Borel system} and the \emph{toroidal system}, respectively.

Such an intermediate system leads to a so-called \emph{Looijenga compactification}. For a given intermediate system 
$\{I\mapsto J(I)\}$ we form the Looijenga partial compactification  
\[
(\mathbb{L}^{\times})^{J}:=\mathbb{L}^{\times}\sqcup\bigsqcup_{\text{$I$ $k$-isotropic}}\mathbb{L}^{\times}(J(I)) 
\quad \text{and} \quad 
\mathbb{B}^{J}:=\mathbb{B}\sqcup\bigsqcup_{\text{$I$ $k$-isotropic}}\mathbb{B}(J(I)), 
\]
endowed with a so-called Satake topology, invariant under $\Gamma$. A basis of this topology on $(\mathbb{L}^{\times})^{J}$ 
is given by (i) the open subsets of $\mathbb{L}^{\times}$ and (ii) for every $k$-isotropic line $I$ the subsets of the form 
$U\sqcup U(J(I))$, where $U$ runs over the open subsets of $\mathbb{L}^{\times}$ invariant under $N_{J(I)}$ and the `positive' 
ray in $I\otimes\bar{I}$. The $\Gamma$-invariant Satake topology on $\mathbb{B}^{J}$ is defined in the same way. 
Notice that the projection 
$(\mathbb{L}^{\times})^{J}\rightarrow\mathbb{B}^{J}$ is a $\mathbb{C}^{\times}$-bundle, so the endowed topology makes 
\[
\mathbb{L}^{J}:=(\mathbb{L}^{\times})^{J}\times^{\mathbb{C}^{\times}}\mathbb{C}
\] 
a topological complex line bundle over $\mathbb{B}^{J}$. We thus get the orbit space 
\[
X^{J}:=\mathbb{B}^{J}/\Gamma
\]
which is compact with respect to the quotient topology, and the line bundle over $X^{J}$ given by 
$\mathbb{L}^{J}/\Gamma$.

Notice that $X^{J}$, as a set, is the disjoint union of $X$ and the torsors $X(J(I))$, where $I$ runs over the representatives 
of the $\Gamma$-orbits in the set of cusps. It is known that $\Gamma$ has only finitely many orbits in its set of cusps, 
and $X^{J}$ is a compact Hausdorff space. The line bundle $\mathbb{L}^{J}\rightarrow\mathbb{B}^{J}$ descends to an analytic 
line bundle on $X^{J}$. Denote its sheaf of sections by $\mathscr{L}$. It is independent of the intermediate system $j$ because 
its restriction to a torsor $X(J(I))$ in the boundary is the trivial line bundle and it realizes the contraction 
$X^{J}\rightarrow X^{*}$ ($X^{*}$ corresponds to the Baily-Borel system). The line bundle $\mathscr{L}$ is ample on $X^{*}$, 
so that the graded algebra 
\[
\bigoplus_{m\geq 0}H^{0}(X^{*},\mathscr{L}^{\otimes m})
\]
is finitely generated and has $X^{*}$ as its Proj.
The two extremal systems give the \emph{Baily-Borel compactification} $X^{*}$ and the \emph{toroidal compactification} $X^{tor}$ 
of $X$ respectively.

Now given a collection $\mathscr{H}$ of hyperbolic hyperplanes of $W$ arithmetically defined relative to $\Gamma$. Note that for 
each $H\in\mathscr{H}$, the hyperplane section $\mathbb{B}_{H}=\mathbb{P}(H)\cap\mathbb{B}$ is a symmetric subdomain 
of $\mathbb{B}$. Denote by $\Gamma_{H}$ its $\Gamma$-stabilizer, which is also arithmetic in its $\mathrm{SU}(h)$-stabilizer. 
So the orbit space $X_{H}:=\mathbb{B}_{H}/\Gamma_{H}$ has its own Baily-Borel compactification $X_{H}^{*}$. 
The inclusion $\mathbb{B}_{H}\subset\mathbb{B}$ induces an analytic morphism $X_{H}\rightarrow X$ whose image is denoted by $D_{H}$, 
and this morphism extends to a finite morphism $X_{H}^{*}\rightarrow X^{*}$ whose image is the closure of $D_{H}$ in $X^{*}$ 
(denoted by $D_{H}^{*}$). 

We assume from now on that the hypersurfaces are not self-intersected in the sense that $X_{H}^{*}\rightarrow D_{H}^{*}$ is a 
homeomorphism and $X_{H}\rightarrow D_{H}$ is an isomorphism. Then the collection $\mathscr{H}_{X^{*}}$ of the hypersurfaces 
$D_{H}^{*}$ is an arrangement on $X$. Note, however, that the hypersurface $D_{H}^{*}$ in $X^{*}$ does not support a Cartier divisor 
in general, while the closure $D_{H}^{tor}$ of $D_{H}$ in $X^{tor}$ is smooth. Thus it is possible to find an intermediate system 
such that the strict transform of $D_{H}^{*}$ on the corresponding intermediate compactification becomes a Cartier divisor. 
To achieve that, let us associate the arrangement 
$\mathscr{H}$ with the assignment $\{I\mapsto J_{\mathscr{H}}(I)=\bigcap_{H\supset I}(H\cap I^{\perp})\}$, 
it is easy to check that it is an intermediate system. And it can be shown that the strict transform of $D_{H}^{*}$ on the 
intermediate compactification $X^{J_{\mathscr{H}}}$, called the Looijenga compactification, becomes a Cartier divisor, 
as desired. In fact it is the smallest normalized blow-up for which it is true.

Then we can apply the discussion of the preceding subsection to $X^{J_{\mathscr{H}}}$ and the corresponding collection 
of the strict transforms of $D_{H}^{*}$, so that we have the following 
birational morphisms and projective completions of $X^{\circ}$
\[
X^{*}\leftarrow X^{J_{\mathscr{H}}}\leftarrow \tilde{X}^{\mathscr{H}}\rightarrow \hat{X}^{\mathscr{H}},
\]
where the morphism $\tilde{X}^{\mathscr{H}}\rightarrow X^{*}$ is the blow-up defined by $\mathscr{O}_{X^{*}}(\mathscr{H}_{X^{*}})$ 
on $X^{*}$. The pullback of the line bundle $\mathscr{L}(\mathscr{H}_{X^{*}})$ to $\tilde{X}^{\mathscr{H}}$ is semiample and 
defines the contraction $\tilde{X}^{\mathscr{H}}\rightarrow \hat{X}^{\mathscr{H}}$. Suppose now that $\mathrm{dim}(\mathbb{B})\geq 2$, 
and that the dimension of a nonempty intersection of members of $\mathscr{H}$ is at least $2$. Then the algebra of 
$\Gamma$-invariant automorphic forms 
\[
\bigoplus_{m\geq 0}H^{0}(\mathbb{B}^{\circ},(\mathbb{L}^{\circ})^{\otimes m})^{\Gamma}
\]
is finitely generated and its Proj is the modification $\hat{X}^{\mathscr{H}}$ of $X^{*}$.

We can apply the above fact to a number of concrete examples of moduli spaces which happen to have a ball quotient structure. 
Let be given an integral normal projective variety $Y$ with an ample line bundle $\eta$ and a reductive group $G$ 
acting on the pair $(Y,\eta)$. 
We are thus given a $G$-invariant open-dense subset $U\subset Y$ consisting of $G$-stable orbits. Then $G$ acts on $U$ properly 
and we have the orbit space $U/G$ as a quasi-projective orbifold with meanwhile the restriction $\eta|U$ descending to an 
orbifold line bundle $(\eta|U)/G$ over $U/G$. Geometric invariant theory tells us that the algebra of $G$-invariants 
$\bigoplus_{m\geq 0}H^{0}(Y,\eta^{\otimes m})^{G}$ is finitely generated. We denote 
its Proj by $Y^{ss}/\!\!/G$ because a point of this Proj can be interpreted as a minimal $G$-orbit in the semistable locus 
$Y^{ss}\subset Y$.

\begin{theorem}[\cite{Looijenga-2003a}]\label{thm:GIT compactifications}
Suppose we are given an identification of $(U,\eta|U)/G$ with a pair coming from a ball arrangement $(X^{\circ},\mathscr{L}|X^{\circ})$, 
such that (i) the boundary $Y^{ss}/\!\!/G-U/G$ is of codimension at least $2$ in $Y^{ss}/\!\!/G$, and (ii) any nonempty intersection of members 
of the arrangement $\mathscr{H}$ with $\mathbb{L}^{\times}$ has dimension at least $2$. Then this identification determines an isomorphism 
\[
\bigoplus_{m\geq 0}H^{0}(Y,\eta^{\otimes m})^{G}\cong 
\bigoplus_{m\geq 0}H^{0}(\mathbb{B}^{\circ},(\mathbb{L}^{\circ})^{\otimes m})^{\Gamma}
\]
for which the algebra of automorphic forms is finitely generated. Moreover, the isomorphism 
$U/G\cong X^{\circ}$ extends to an isomorphism $Y^{ss}/\!\!/G\cong\hat{X}^{\mathscr{H}}$ by taking their Proj's.
\end{theorem}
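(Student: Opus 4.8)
The plan is to prove Theorem \ref{thm:GIT compactifications}, which is Looijenga's theorem, by first matching the two graded algebras of sections over the common open part $U/G \cong X^\circ$ and then showing that the codimension and dimension hypotheses force the two extensions over the boundary to agree. I would begin by transporting the orbifold line bundle: the given isomorphism $(U,\eta|U)/G \cong (X^\circ,\mathscr{L}|X^\circ)$ identifies $\eta|U$ with $\mathscr{L}|X^\circ$, hence for each $m$ it identifies $G$-invariant sections of $\eta^{\otimes m}$ over $U$ with $\Gamma$-invariant automorphic forms of weight $m$ on $\mathbb{B}^\circ$, i.e.\ sections of $(\mathbb{L}^\circ)^{\otimes m}$. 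The content is then to check that the global sections over $Y$ (resp.\ over $X^\circ$, via the finitely generated algebra producing $\hat X^{\mathscr{H}}$) coincide with the sections merely over the stable locus $U$ (resp.\ over $\mathbb{B}^\circ$). On the GIT side, $Y^{ss}/\!\!/G$ is normal and $U/G$ is an open subset whose complement has codimension $\geq 2$ by hypothesis (i), so by normality (Hartogs / the fact that $Y^{ss}/\!\!/G$ is the Proj of $\bigoplus H^0(Y,\eta^{\otimes m})^G$) every section of $(\eta|U)/G^{\otimes m}$ over $U/G$ extends uniquely over $Y^{ss}/\!\!/G$; pulling back, $H^0(Y,\eta^{\otimes m})^G = H^0(U,\eta^{\otimes m})^G$.

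Next I would do the analogous extension on the ball-quotient side. Here the role of hypothesis (ii) is to guarantee, via the finite-generation statement recalled just before the theorem (the case $\dim\mathbb{B}\geq 2$ and every nonempty intersection of members of $\mathscr{H}$ having dimension $\geq 2$), that the algebra $\bigoplus_{m\geq 0} H^0(\mathbb{B}^\circ,(\mathbb{L}^\circ)^{\otimes m})^\Gamma$ is finitely generated with Proj equal to $\hat X^{\mathscr{H}}$. Concretely, one uses the Looijenga compactification $X^{J_{\mathscr{H}}}$ and its blow-up $\tilde X^{\mathscr{H}}$: the semiample sheaf $\pi^*\mathscr{L}(\mathscr{H}_{X^*})$ on $\tilde X^{\mathscr{H}}$ contracts exactly the fibres of the projections $E(L)\to\tilde{\mathbb P}(L,X)$, so its section ring is that of $\hat X^{\mathscr{H}}$, and the dimension hypothesis on intersections is what makes the boundary strata $\mathbb P(L,X)^\circ$ have codimension $\geq 2$ in $\hat X^{\mathscr{H}}$ (so no new sections or poles are introduced over the boundary — Koecher-type principle). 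With both algebras thereby identified with their restrictions to the open stratum, the ring isomorphism of the theorem follows formally from the bundle identification of the first step, and taking Proj gives $Y^{ss}/\!\!/G \cong \hat X^{\mathscr{H}}$ extending $U/G\cong X^\circ$.

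I expect the main obstacle to be the finite-generation and Proj-identification on the ball-quotient side, i.e.\ showing that $\bigoplus H^0(\mathbb{B}^\circ,(\mathbb{L}^\circ)^{\otimes m})^\Gamma$ is finitely generated with Proj $\hat X^{\mathscr{H}}$ rather than something larger. This is where the local analysis of the developing map near the arrangement — the propositions of Section \ref{sec:projective-structure} on infinitesimally simple degenerations along the (iterated blow-up of the) arrangement — is genuinely used: one must control the growth of automorphic forms along each $D_H^*$ and along the cusps simultaneously, which is exactly why one passes to $X^{J_{\mathscr{H}}}$ (where the strict transforms become Cartier) and then to the big resolution $\tilde X^{\mathscr{H}}$ (where everything is simple normal crossing) before contracting. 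The codimension-$\geq 2$ conditions (i) and (ii) are precisely the inputs that let Hartogs/Koecher extension close the argument; without them the two algebras could differ. The remaining steps — the bundle bookkeeping and the normality/Hartogs extension on the GIT side — are routine once the ball-quotient side is in place, so I would present them briefly and refer to \cite{Looijenga-2003a} for the detailed estimates underlying the finite-generation claim.
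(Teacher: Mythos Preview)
The paper does not give its own proof of this theorem: it is stated with the attribution \cite{Looijenga-2003a} and immediately followed by a remark, with no \texttt{proof} environment. So there is nothing in the present paper to compare your proposal against beyond the surrounding discussion that sets up the objects $X^{J_{\mathscr{H}}}$, $\tilde X^{\mathscr{H}}$, $\hat X^{\mathscr{H}}$ and records the finite-generation statement you invoke.

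That said, your outline is the correct skeleton of Looijenga's argument and is consistent with how the paper packages the ingredients: identify sections over the common open piece via the given bundle isomorphism; use normality of $Y^{ss}/\!\!/G$ together with hypothesis~(i) to get $H^0(Y,\eta^{\otimes m})^G=H^0(U,\eta^{\otimes m})^G$; use hypothesis~(ii) and the arrangement blow-up/contraction picture recalled just before the theorem to identify $\bigoplus_m H^0(\mathbb{B}^\circ,(\mathbb{L}^\circ)^{\otimes m})^\Gamma$ as a finitely generated algebra with $\mathrm{Proj}=\hat X^{\mathscr{H}}$; then take $\mathrm{Proj}$. Your identification of the hard step (finite generation and the $\mathrm{Proj}$ description on the ball-quotient side, requiring the passage through $X^{J_{\mathscr{H}}}$ and $\tilde X^{\mathscr{H}}$) is accurate, and deferring the analytic estimates to \cite{Looijenga-2003a} is exactly what the present paper does.
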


\begin{remark}
(i) The arrangement $\mathscr{H}$ relevant to $\mathbb{B}^{\circ}$ in the theorem may not be the full arrangement 
associated to $\Gamma$ which endows $X$ with the full orbifold structure.

(ii) The identification demanded by the theorem usually comes from a period map. If so, the (co)dimension conditions in the theorem 
are often satisfied.
\end{remark}

\section{Singularities for ball quotients}\label{sec:singularities}

In this section we discuss what kind of singularities a ball quotient has, mainly from two perspectives: one is the 
MMP singularities in view of birational geometry, and the other one is the conical singularities in view of the hyperbolic geometry.

\subsection{MMP singularities}

There are mainly two types of MMP singularities on a compactification of a ball quotient, one comes from the inside of a ball quotient, 
and the other one comes from the boundary of the ball quotient.

\begin{definition}
A \emph{quotient singularity} is an analytic space germ $(X,q)$ such that it is isomorphic to $(\mathbb{C}^{n}/G,\pi(0))$, 
where $G$ is a finite subgroup of $\mathrm{GL}_{n}(\mathbb{C})$ and $\pi:\mathbb{C}^{n}\rightarrow\mathbb{C}^{n}/G$ is 
the natural projection.
\end{definition}

Let $Y$ be a complex manifold and $\mathrm{Aut}(Y)$ the holomorphic automorphism group of $Y$. If $\Gamma$ is a discrete subgroup 
of $\mathrm{Aut}(Y)$, which implies that $\Gamma$ acts properly discontinuously on $Y$, then each point of the complex quotient 
space $Y/\Gamma$ is a quotient singularity since the isotropy group $\Gamma_{p}$ is finite for any $p\in Y$. Note that we can 
also regard the regular points as trivial singularities.

\begin{definition}
Let $Y$ be a complex manifold. An element $\sigma\in\mathrm{Aut}(Y)$ is called a complex reflection at $p$ if it is of finite order 
and the codimension $1$ stratum of its fixed points set passes through $p$. An element $\sigma$ is called a complex reflection of 
$Y$ if its fixed points set is nonempty and it is a complex reflection at each point of the codimension $1$ stratum of 
its fixed points set.
\end{definition}

\begin{remark}
From the definition we see that if $\sigma$ is a complex reflection at $p$, then its representation $(d\sigma)_{p}$ in the tangent 
space $T_{p}Y$ has exactly one eigenvalue other than $1$ and with multiplicity $1$.
\end{remark}

Now let us see what role a complex reflection plays in a quotient singularity.

\begin{proposition}[\cite{Gottschling}]\label{prop:reflection}
Let $Y$ be a complex manifold, $p\in Y$, and $\Gamma$ a discrete group acting on $Y$ with $\pi: Y\rightarrow Y/\Gamma=X$ the 
natural projection. Then $\pi(p)$ is regular on $X$ if and only if $\Gamma_{p}$ is generated by complex	reflections at $p$.
\end{proposition}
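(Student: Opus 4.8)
The statement to prove is Proposition~\ref{prop:reflection} (attributed to Göttschling): for a discrete group $\Gamma$ acting on a complex manifold $Y$ with quotient $X = Y/\Gamma$ and projection $\pi$, the point $\pi(p)$ is a regular point of $X$ if and only if the isotropy group $\Gamma_p$ is generated by complex reflections at $p$.

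The plan is to reduce immediately to a local statement. Since $\Gamma$ acts properly discontinuously, by the slice theorem there is a $\Gamma_p$-invariant neighbourhood of $p$ equivariantly biholomorphic to a $\Gamma_p$-invariant neighbourhood of the origin in the tangent space $T_pY \cong \mathbb{C}^n$ with the linear action $(d\gamma)_p$; hence $(X,\pi(p))$ is the quotient singularity $(\mathbb{C}^n/\Gamma_p, 0)$ with $\Gamma_p \hookrightarrow \mathrm{GL}_n(\mathbb{C})$ acting linearly. So it suffices to show: a finite subgroup $G \subset \mathrm{GL}_n(\mathbb{C})$ has $\mathbb{C}^n/G$ smooth at $0$ if and only if $G$ is generated by (pseudo-)reflections, where a reflection here is an element fixing a hyperplane pointwise, i.e.\ with exactly one eigenvalue $\neq 1$, of multiplicity one — matching the Remark after the definition of complex reflection.

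For the "if" direction I would invoke the Shephard--Todd--Chevalley theorem: if $G$ is generated by pseudo-reflections, then the invariant ring $\mathbb{C}[x_1,\dots,x_n]^G$ is a polynomial ring on $n$ homogeneous generators, so $\mathrm{Spec}$ of it — which is $\mathbb{C}^n/G$ — is isomorphic to $\mathbb{C}^n$, in particular smooth at $0$. For the "only if" direction, let $H \trianglelefteq G$ be the normal subgroup generated by all pseudo-reflections in $G$; then $\mathbb{C}^n/H \cong \mathbb{C}^n$ by Chevalley, and $\mathbb{C}^n/G = (\mathbb{C}^n/H)/(G/H)$, where the induced $G/H$-action on $\mathbb{C}^n/H \cong \mathbb{C}^n$ contains no pseudo-reflections (any pseudo-reflection upstairs would already lie in $H$, and one checks the quotient construction does not create new ones — this uses that $G/H$ acts on the new coordinates linearly up to the grading, or more directly a ramification/purity argument). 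Then if $G/H$ were nontrivial it would act with a fixed locus of codimension $\geq 2$, and the quotient of $\mathbb{C}^n$ by a finite group acting freely in codimension $1$ is singular at the image of $0$ unless the group is trivial — this is the classical fact that such quotients have nontrivial local fundamental group (or: the map is étale in codimension one, so if the quotient were smooth it would be an étale cover of a smooth variety, forcing the group to be trivial since $\mathbb{C}^n$ is simply connected away from the fixed locus when that locus has codimension $\geq 2$).

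The main obstacle is the "only if" direction, specifically the step showing that a finite group acting linearly on $\mathbb{C}^n$ \emph{without} pseudo-reflections and nontrivially cannot have a smooth quotient at the origin. The cleanest route is purity of the branch locus together with simple connectedness: the quotient map $\mathbb{C}^n \to \mathbb{C}^n/G$ is unramified in codimension $1$ precisely when $G$ has no pseudo-reflections, so if $\mathbb{C}^n/G$ were smooth (hence normal and simply connected in a punctured neighbourhood, as $\mathbb{C}^n$ minus a codimension $\geq 2$ set is simply connected), the covering $\mathbb{C}^n \setminus (\text{ramification}) \to (\mathbb{C}^n/G) \setminus (\text{its image})$ would be a connected étale cover of a simply connected space, forcing $G = \{1\}$. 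I would cite Göttschling's original paper for the complex-analytic formulation and note that Chevalley--Shephard--Todd plus purity of the branch locus supply the algebraic backbone; the local analytic linearization at the start is what ports the whole argument to the manifold setting.
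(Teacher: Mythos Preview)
The paper does not give a proof of this proposition at all: it is simply quoted as a result of G\"ottschling, with a citation, and immediately followed by the companion Chevalley--Shephard--Todd statement (also cited without proof). So there is no ``paper's own proof'' to compare against.

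That said, your sketch is the standard correct argument and would serve well as a proof. The reduction to a linear finite group action via Cartan/Bochner linearization is exactly right, and the ``if'' direction is precisely the Chevalley--Shephard--Todd theorem (which the paper itself records as the next proposition, citing Bourbaki). For the ``only if'' direction your purity-of-branch-locus argument is the cleanest route: with $H\trianglelefteq G$ the subgroup generated by pseudo-reflections, $\mathbb{C}^n/H\cong\mathbb{C}^n$ is smooth, the finite map $\mathbb{C}^n/H\to\mathbb{C}^n/G$ is unramified in codimension~$1$, so if the target were smooth the map would be \'etale by Zariski--Nagata purity; but the image of the origin is a one-point fibre of a degree~$|G/H|$ map, forcing $G=H$. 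One small remark: the parenthetical about $G/H$ ``acting linearly on the new coordinates'' and ``not creating new pseudo-reflections'' is the only place you are a bit hand-wavy, but you correctly observe that the purity argument bypasses this entirely, so there is no actual gap.
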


Note also a global result, which for real (involutive) reflections has its origin in Chevalley's article \cite{Chevalley}.
A complex analogue is as follows:

\begin{proposition}[{\cite[V, \S 5.3, Theorem 3]{Bourbaki}}]
If $G$ is a finite subgroup of $\mathrm{GL}_{n}(\mathbb{C})$ generated by complex reflections, then there is
an (algebraic) isomorphism $\mathbb{C}^{n}/G\cong\mathbb{C}^{n}$.
\end{proposition}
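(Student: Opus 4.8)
The assertion is one direction of the Chevalley--Shephard--Todd theorem, and the plan is to establish the equivalent algebraic statement that the graded invariant ring $R:=\mathbb{C}[x_{1},\dots,x_{n}]^{G}$ is a polynomial ring; taking $\operatorname{Spec}$ then yields an isomorphism of affine varieties $\mathbb{C}^{n}/G=\operatorname{Spec}R\cong\operatorname{Spec}\mathbb{C}[y_{1},\dots,y_{n}]=\mathbb{C}^{n}$, which is algebraic. Write $S:=\mathbb{C}[x_{1},\dots,x_{n}]$ with its standard grading, let $R_{+}\subset R$ be the ideal of positive-degree invariants, $\mathfrak{h}:=SR_{+}$ the Hilbert ideal, and $\rho\colon S\to R$, $\rho(s)=\tfrac{1}{|G|}\sum_{g\in G}g\cdot s$, the Reynolds operator. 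Since $S$ is a finite extension of $R$, we have $\dim R=\dim S=n$, and by Noether's finiteness theorem $R$ is a finitely generated graded $\mathbb{C}$-algebra, hence Noetherian; fix a minimal homogeneous generating set $f_{1},\dots,f_{m}$ of the ideal $R_{+}$. An easy induction on degree shows these also generate $R$ as a $\mathbb{C}$-algebra, and minimality means that no proper subset of $\{f_{1},\dots,f_{m}\}$ generates $R_{+}$, so $f_{1}\notin(f_{2},\dots,f_{m})R$ and likewise after any permutation. It therefore suffices to prove that $f_{1},\dots,f_{m}$ are algebraically independent: then $R=\mathbb{C}[f_{1},\dots,f_{m}]$ is polynomial and comparing Krull dimensions forces $m=n$.

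The main point --- and the only step that uses the hypothesis that $G$ is generated by complex reflections --- is the following \emph{Chevalley lemma}: if $u_{1},\dots,u_{k}\in R$ are homogeneous with $u_{1}\notin(u_{2},\dots,u_{k})R$ and $v_{1},\dots,v_{k}\in S$ are homogeneous with $\sum_{i}v_{i}u_{i}=0$, then $v_{1}\in\mathfrak{h}$. I would prove this by induction on $\deg v_{1}$. When $\deg v_{1}=0$, applying $\rho$ to the relation gives $v_{1}u_{1}=-\sum_{i\geq2}\rho(v_{i})u_{i}\in(u_{2},\dots,u_{k})R$, forcing $v_{1}=0$. For the inductive step, let $\sigma\in G$ be any complex reflection, with reflecting hyperplane defined by a linear form $\ell_{\sigma}$; since $\sigma$ fixes that hyperplane pointwise, $\sigma(p)-p$ is divisible by $\ell_{\sigma}$ for every $p\in S$, say $\sigma(v_{i})-v_{i}=\ell_{\sigma}v_{i}'$ with $\deg v_{i}'<\deg v_{i}$. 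Applying $\sigma$ to $\sum_{i}v_{i}u_{i}=0$, subtracting, and using $\sigma u_{i}=u_{i}$, we get $\ell_{\sigma}\sum_{i}v_{i}'u_{i}=0$, so $\sum_{i}v_{i}'u_{i}=0$ in the domain $S$; by induction $v_{1}'\in\mathfrak{h}$, i.e. $\sigma(v_{1})\equiv v_{1}\pmod{\mathfrak{h}}$. As $\mathfrak{h}$ is $G$-stable and the reflections generate $G$, this gives $g(v_{1})\equiv v_{1}\pmod{\mathfrak{h}}$ for all $g\in G$, hence $\rho(v_{1})\equiv v_{1}\pmod{\mathfrak{h}}$; but $\rho(v_{1})\in R$ has positive degree, so $\rho(v_{1})\in R_{+}\subset\mathfrak{h}$, and therefore $v_{1}\in\mathfrak{h}$. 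This is the hard part; once it is in hand the rest is formal.

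Granting the lemma, I argue by contradiction. If the $f_{i}$ are dependent, pick a nonzero relation $P(f_{1},\dots,f_{m})=0$; assigning $y_{j}$ the weight $\deg f_{j}$, take $P$ weighted-homogeneous of minimal weight, and set $h_{j}:=(\partial P/\partial y_{j})(f_{1},\dots,f_{m})\in R$. Not all $h_{j}$ vanish: otherwise minimality of $P$ forces $\partial P/\partial y_{j}=0$ for all $j$, and Euler's identity (in characteristic $0$) then forces $P=0$. After relabelling the $f_{i}$, assume $h_{1},\dots,h_{r}$ is a minimal generating subset of the ideal $(h_{1},\dots,h_{m})R$, with $h_{j}=\sum_{i=1}^{r}c_{ji}h_{i}$ ($j>r$, $c_{ji}\in R$ homogeneous). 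The chain rule applied to $\partial_{x_{\ell}}(P(f))=0$ gives $\sum_{j}h_{j}\,\partial_{x_{\ell}}f_{j}=0$ for each $\ell$, which becomes $\sum_{i=1}^{r}h_{i}w_{i\ell}=0$ with $w_{i\ell}:=\partial_{x_{\ell}}f_{i}+\sum_{j>r}c_{ji}\,\partial_{x_{\ell}}f_{j}$. Since $h_{1}\notin(h_{2},\dots,h_{r})R$, the Chevalley lemma gives $w_{1\ell}\in\mathfrak{h}$ for every $\ell$; but $w_{1\ell}$ is homogeneous of degree $\deg f_{1}-1$, so in any expression $w_{1\ell}=\sum_{i}a_{i\ell}f_{i}$ with $a_{i\ell}\in S$ homogeneous the coefficient $a_{1\ell}$ has negative degree and vanishes, whence $w_{1\ell}\in(f_{2},\dots,f_{m})S$. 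Multiplying by $x_{\ell}$, summing over $\ell$, and applying Euler's identity to $f_{1}$ and to each $f_{j}$ yields $(\deg f_{1})f_{1}+\sum_{j>r}(\deg f_{j})c_{j1}f_{j}\in(f_{2},\dots,f_{m})S$; since each $f_{j}$ with $j>r$ lies among $f_{2},\dots,f_{m}$, this gives $(\deg f_{1})f_{1}\in(f_{2},\dots,f_{m})S$, and applying $\rho$ (using invariance of the $f_{j}$) gives $(\deg f_{1})f_{1}\in(f_{2},\dots,f_{m})R$, contradicting $f_{1}\notin(f_{2},\dots,f_{m})R$ because $\deg f_{1}>0$. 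Hence no such $P$ exists, $m=n$, and $R$ is a polynomial ring, as claimed. One may cross-check via Molien's formula: comparing $\tfrac{1}{|G|}\sum_{g\in G}\det(1-tg)^{-1}$ with $\prod_{i}(1-t^{\deg f_{i}})^{-1}$ recovers the classical identities $\prod_{i}\deg f_{i}=|G|$ and $\sum_{i}(\deg f_{i}-1)=\#\{\text{complex reflections in }G\}$.
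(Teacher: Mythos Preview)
The paper does not supply a proof of this proposition; it is stated with a citation to Bourbaki and then used without further argument. Your write-up, by contrast, gives a complete proof, and it is correct: this is precisely the classical Chevalley argument (also the one in the cited Bourbaki reference), establishing via the ``Chevalley lemma'' on syzygies that a minimal homogeneous generating set of $R_{+}$ is algebraically independent, hence that $R=\mathbb{C}[x_{1},\dots,x_{n}]^{G}$ is a polynomial ring in $n$ variables. The key steps---the inductive proof of the lemma using divisibility of $\sigma(p)-p$ by the linear form cutting out the reflecting hyperplane, and the contradiction argument via the chain rule and Euler's identity---are all carried out correctly. The Molien cross-check at the end is a nice consistency remark but not needed for the logic. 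So you have done strictly more than the paper does here, and your proof is essentially identical to the one being cited.
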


Now let $Y$ be a complex ball $\mathbb{B}$ and $\Gamma$ be an arithmetic subgroup of $\mathrm{SU}(h)(k)$, which implies that 
$\Gamma$ acts properly discontinuously on $\mathbb{B}$. So all the stabilizers are finite and the orbit space $X=\mathbb{B}/\Gamma$ 
has quotient singularities only. It implies that $X$ is $\mathbb{Q}$-factorial with rational singularities by the following 
proposition.

\begin{proposition}[{\cite[Proposition 5.15]{Kollar-Mori}}]
Let $X$ be an algebraic or analytic variety over $\mathbb{C}$ with quotient singularities only. Then $X$ has rational singularities 
and $X$ is $\mathbb{Q}$-factorial.
\end{proposition}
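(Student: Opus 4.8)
Both statements are local on $X$, so the plan is to reduce to the model case $X=\mathbb{C}^n/G$ with quotient map $q\colon\mathbb{C}^n\to X$; normality is then automatic, as $\mathcal{O}_X=\mathcal{O}_{\mathbb{C}^n}^G$ is integrally closed. First I would invoke the Chevalley--Shephard--Todd theorem quoted above to simplify the group: writing $G'\trianglelefteq G$ for the normal subgroup generated by the complex reflections of $G$, one has $\mathbb{C}^n/G'\cong\mathbb{C}^n$, so we may replace $G$ by $G/G'$ and assume that $G$ contains no complex reflection. Then the fixed locus of every nontrivial element of $G$ has codimension $\geq 2$, so $q$ is \'etale in codimension one and $X$ is smooth in codimension one --- a point that will be needed for $\mathbb{Q}$-factoriality.

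For rational singularities the plan is the usual averaging (trace) argument. Fix a resolution $\pi\colon\tilde X\to X$, let $\bar Z$ be the normalization of the fibre product $\tilde X\times_X\mathbb{C}^n$ --- a normal variety carrying a finite morphism $\nu\colon\bar Z\to\tilde X$ of degree $|G|$ and a proper birational morphism $\bar Z\to\mathbb{C}^n$ --- and, using functoriality of resolution in characteristic zero, choose a $G$-equivariant resolution $W\to\bar Z$, where $G$ acts trivially on $\tilde X$. Since $W$ and $\mathbb{C}^n$ are smooth, $R(W\to\mathbb{C}^n)_*\mathcal{O}_W=\mathcal{O}_{\mathbb{C}^n}$. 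The heart of the argument is to show that $\mathcal{O}_{\tilde X}$ is a direct summand of $R(W\to\tilde X)_*\mathcal{O}_W$ in the derived category: taking $G$-invariants is exact in characteristic zero, the quotient $W/G$ is normal (being a quotient of a smooth variety by a finite group) and maps properly and birationally onto the normal variety $\tilde X$, so that $\bigl((W\to\tilde X)_*\mathcal{O}_W\bigr)^G=\mathcal{O}_{\tilde X}$, and the operator $\tfrac1{|G|}\sum_{\sigma\in G}\sigma^*$ then splits this invariant part --- concentrated in degree zero --- off $R(W\to\tilde X)_*\mathcal{O}_W$. Applying $R\pi_*$ and using $\pi\circ(W\to\tilde X)=q\circ(W\to\mathbb{C}^n)$ exhibits $R\pi_*\mathcal{O}_{\tilde X}$ as a direct summand of $Rq_*\mathcal{O}_{\mathbb{C}^n}=q_*\mathcal{O}_{\mathbb{C}^n}$, which is concentrated in degree zero; hence $R^i\pi_*\mathcal{O}_{\tilde X}=0$ for all $i>0$, which is rationality.

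For $\mathbb{Q}$-factoriality I would work with the analytic (or henselian) local ring $A=\mathcal{O}_{\mathbb{C}^n,0}$, on which $G$ acts with $A^G=\mathcal{O}_{X,0}$, and let $D$ be a prime Weil divisor on $X$ through the image of the origin. As $A$ is factorial, the $G$-invariant divisor $q^{-1}(D)$ equals $\mathrm{div}(f)$ for some $f\in A$, and $G$-invariance forces $\sigma^*f=u_\sigma f$ with $u_\sigma\in A^*$, giving a $1$-cocycle of $G$ with values in $A^*$. Now $A^*=\mathbb{C}^*\times(1+\mathfrak{m})$ with $1+\mathfrak{m}$ uniquely divisible (extract roots via the binomial series), so $H^1(G,1+\mathfrak{m})=0$ and $H^1(G,A^*)=H^1(G,\mathbb{C}^*)=\mathrm{Hom}(G,\mathbb{C}^*)$ is finite; hence for some $m>0$ the cocycle $(u_\sigma^m)$ is a coboundary $\sigma^*(a)/a$ with $a\in A^*$, and $h:=f^m/a$ is $G$-invariant, so it descends to a rational function on $X$ with $q^*\mathrm{div}_X(h)=\mathrm{div}(f^m)=m\,q^{-1}(D)=q^*(mD)$, where the reflection-free reduction is used so that $q$ contributes no ramification correction. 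Since $q_*q^*=|G|\cdot\mathrm{id}$ on Weil divisors, $q^*$ is injective, so $\mathrm{div}_X(h)=mD$ and $mD$ is Cartier; thus $X$ is $\mathbb{Q}$-factorial.

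The bookkeeping with fibre products and equivariant resolutions is routine; the two places that need genuine care are, for rationality, upgrading the $G$-averaging to an honest direct-summand splitting \emph{in the derived category} (so that the degree-zero concentration of $q_*\mathcal{O}_{\mathbb{C}^n}$ forces every higher $R^i\pi_*\mathcal{O}_{\tilde X}$ to vanish), and, for $\mathbb{Q}$-factoriality, the cohomological input $H^1(G,A^*)=\mathrm{Hom}(G,\mathbb{C}^*)$ together with the verification that $q$ is unramified in codimension one. I expect the derived-category splitting to be the main obstacle to a fully rigorous write-up.
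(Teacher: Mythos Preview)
The paper does not prove this proposition; it is quoted verbatim from Koll\'ar--Mori and used as a black box. So there is no ``paper's proof'' to compare against. Your strategy is the standard one and is essentially how the cited reference proceeds.

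One point in your rationality argument deserves sharpening. You assert that the $G$-invariant part of $R(W\to\tilde X)_*\mathcal{O}_W$ is ``concentrated in degree zero''. Since taking $G$-invariants is exact in characteristic zero and $W\to\tilde X$ factors through $W/G$, this invariant part is $R(W/G\to\tilde X)_*\mathcal{O}_{W/G}$; asking its higher cohomology to vanish is asking a variety with quotient singularities to behave well under a proper birational map to a smooth target, which is essentially what you are trying to prove. The repair is that you do not need the invariant part to be concentrated in degree zero---you only need $\mathcal{O}_{\tilde X}$ to be a \emph{direct summand} of $R(W\to\tilde X)_*\mathcal{O}_W$. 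For this, take a further resolution $\psi\colon V\to W/G$; then $V\to\tilde X$ is proper birational between smooth varieties, so $R(V\to\tilde X)_*\mathcal{O}_V=\mathcal{O}_{\tilde X}$, and composing the averaging projector with $Rh_*$ applied to the unit $\mathcal{O}_{W/G}\to R\psi_*\mathcal{O}_V$ gives the required retraction $R(W\to\tilde X)_*\mathcal{O}_W\to\mathcal{O}_{\tilde X}$. (Equivalently, one can bypass the group action and use the Grothendieck-duality trace for the generically finite proper morphism $W\to\tilde X$ between smooth varieties.) You flagged this step yourself as the delicate one; with this adjustment the argument goes through.

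Your $\mathbb{Q}$-factoriality argument via $H^1(G,A^*)=\mathrm{Hom}(G,\mathbb{C}^*)$ is correct as written.
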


Furthermore, $X$ has Kawamata log terminal (klt) singularities. For that we first give a lemma.

\begin{lemma}[{\cite[Proposition 5.20]{Kollar-Mori}}]
Let $f:X'\rightarrow X$ be a finite morphism between $n$-dimensional normal varieties. Let $\Delta=\sum a_{i}D_{i}$ be a 
$\mathbb{Q}$-divisor on $X$ and $\Delta'=\sum a_{j}'D_{j}'$ a $\mathbb{Q}$-divisor on $X'$ such that 
$K_{X'}+\Delta'=f^{*}(K_{X}+\Delta)$. Then $(X,\Delta)$ is klt if and only if $(X',\Delta')$ is klt.
\end{lemma}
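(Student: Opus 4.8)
The plan is to reduce the statement to a comparison of log discrepancies of divisorial valuations on the two sides, and then to relate those discrepancies through the ramification formula for the finite morphism $f$. Since all pairs here live over $\mathbb{C}$, the morphism $f$ is automatically generically \'etale, and since klt is a local property we may work on a suitable Zariski (or analytic) neighborhood.

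First I would set up a common resolution. Choose a log resolution $g\colon Y\to X$ of $(X,\Delta)$, let $Y'$ be the normalization of the fibre product $Y\times_{X}X'$, with induced maps $h\colon Y'\to Y$ and $g'\colon Y'\to X'$. Then $h$ is finite and surjective. After a further sequence of blow-ups of $Y$ followed by renormalization --- which changes neither $(X,\Delta)$ nor $(X',\Delta')$ nor their klt-ness, by resolution-independence of discrepancies --- we may assume in addition that $g'$ is a log resolution of $(X',\Delta')$ and that $h$ is a finite morphism between smooth varieties whose ramification and branch divisors have simple normal crossings.

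Next I would write down the two discrepancy identities on the chosen resolutions, namely $K_{Y}=g^{*}(K_{X}+\Delta)+\sum_{E}a(E,X,\Delta)\,E$ and $K_{Y'}=g'^{*}(K_{X'}+\Delta')+\sum_{E'}a(E',X',\Delta')\,E'$, where each sum runs over all prime divisors on the resolution (with the convention that the strict transform of a component $D_{i}$ of $\Delta$, resp.\ $D_j'$ of $\Delta'$, carries coefficient $-a_{i}$, resp.\ $-a_j'$, so that klt means all these coefficients exceed $-1$). The bridge between them is the Hurwitz formula $K_{Y'}=h^{*}K_{Y}+R_{h}$ with $R_{h}=\sum_{E'}(r_{E'}-1)\,E'$ effective ($r_{E'}\ge 1$ the ramification index of $h$ along $E'$), together with $h^{*}E=\sum_{E'\mapsto E}r_{E'}E'$ and the hypothesis $f^{*}(K_{X}+\Delta)=K_{X'}+\Delta'$, which we pull back to $Y'$. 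Comparing coefficients along a prime divisor $E'$ of $Y'$ lying over a prime divisor $E$ of $Y$ gives the key relation
\[
1+a(E',X',\Delta')=r_{E'}\bigl(1+a(E,X,\Delta)\bigr).
\]
Because $r_{E'}\ge 1$, the left side is positive exactly when the right side is, i.e.\ exactly when $a(E,X,\Delta)>-1$; conversely, since $h$ is finite and surjective, every prime divisor on $Y$ is dominated by a prime divisor on $Y'$ (and every prime divisor on $Y'$ maps onto a prime divisor on $Y$), so the relation transfers the positivity condition in both directions. Hence all log discrepancies over $X$ are $>-1$ if and only if all log discrepancies over $X'$ are, which is precisely the claim; the coefficient condition $\lfloor\Delta\rfloor\le 0\Leftrightarrow\lfloor\Delta'\rfloor\le 0$ is the special case where $E'$ is the strict transform of a component.

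The main obstacle is not conceptual but organizational: one must arrange that a single $Y$ (and the induced $Y'$) simultaneously resolves both log pairs and that $h$ has SNC ramification, which may require iterating blow-ups and renormalizations and checking at each stage that klt-ness is unaffected; and one must invoke the elementary but essential fact that, for a finite surjective morphism of normal varieties, prime divisors correspond surjectively in both directions. This is exactly the argument carried out in \cite[Proposition 5.20]{Kollar-Mori}, which we follow.
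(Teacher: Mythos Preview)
Your proposal is correct and is precisely the standard argument from \cite[Proposition~5.20]{Kollar-Mori}. Note that the paper itself does not give a proof of this lemma: it is stated with the citation and used as a black box in the subsequent theorem, so there is no ``paper's own proof'' to compare against beyond the reference you already follow.
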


\begin{theorem}
$X$ has klt singularities.
\end{theorem}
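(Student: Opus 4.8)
The plan is to reduce the claim to the klt-ness of a pair with a nice group-theoretic model, and then apply the preceding lemma on finite morphisms. First I would observe that the statement is local and purely about the analytic germs of $X = \mathbb{B}/\Gamma$; since $\Gamma$ acts properly discontinuously with finite stabilizers, each germ $(X, \pi(p))$ is a quotient singularity $(\mathbb{C}^n/G, 0)$ with $G = \Gamma_p \subset \mathrm{GL}_n(\mathbb{C})$ finite (acting linearly after choosing suitable local coordinates centered at $p$, e.g. via the action on the tangent space $T_p\mathbb{B}$). So it suffices to show: for any finite $G \subset \mathrm{GL}_n(\mathbb{C})$, the variety $\mathbb{C}^n/G$ has klt singularities.

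Next I would set up the finite cover $f : \mathbb{C}^n \to \mathbb{C}^n/G =: X$. On the source, $K_{\mathbb{C}^n} = 0$, so with $\Delta' = 0$ we want a $\mathbb{Q}$-divisor $\Delta$ on $X$ with $K_{\mathbb{C}^n} + 0 = f^*(K_X + \Delta)$. The natural candidate is $\Delta = 0$: because $\mathbb{C}^n$ is smooth and $G$ acts freely in codimension one after removing the reflection hyperplanes — more precisely, by the Reynolds-operator / pullback-of-canonical-forms computation, $f^*K_X = K_{\mathbb{C}^n} - R$ where $R$ is the ramification divisor, and $R$ is supported on the preimages of the images of the reflection hyperplanes. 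Rather than track $R$ explicitly, I would instead use the result already quoted in the excerpt that $X = \mathbb{C}^n/G$ is $\mathbb{Q}$-factorial with rational singularities (Proposition 5.15 of \cite{Kollar-Mori}), so $K_X$ is $\mathbb{Q}$-Cartier and the pullback $f^*K_X$ makes sense; then I would invoke the finite-morphism lemma stated just above the theorem with $X' = \mathbb{C}^n$, $\Delta' = $ the (effective) ramification-correction divisor determined by $K_{\mathbb{C}^n} + \Delta' = f^*K_X$. The point is that $\Delta'$ is effective with coefficients $1 - 1/m_i < 1$ (each component is a reflection hyperplane, contributing $(m_i - 1)$ to ramification and $1 - 1/m_i$ after dividing by $m_i$), so the pair $(\mathbb{C}^n, \Delta')$ is visibly klt (it is log smooth with all coefficients in $[0,1)$). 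By the lemma, $(X, 0)$ is klt, i.e. $X$ has klt singularities.

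An alternative, cleaner route that avoids computing $\Delta'$ at all: quotient singularities are klt is a standard fact, and one can cite it directly — but since the excerpt has carefully assembled exactly the lemma needed (the finite-cover criterion) together with $\mathbb{Q}$-factoriality and rationality, I expect the intended proof is the one above, applying that lemma to the Galois cover $\mathbb{C}^n \to \mathbb{C}^n/G$. The main obstacle is the bookkeeping in identifying $\Delta'$: one must check that the divisor $\Delta'$ forced by $K_{\mathbb{C}^n} + \Delta' = f^*K_X$ is effective and has all coefficients strictly less than $1$. This is where reflections re-enter the picture — only the reflection hyperplanes (in $\mathbb{C}^n$) contribute to ramification over codimension-one points of $X$, each with a finite branching order $m_i \geq 2$, giving coefficient $1 - 1/m_i \in [1/2, 1) \subset [0,1)$; components mapping to smaller strata do not affect the discrepancy computation. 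Once this is in hand the conclusion is immediate from the lemma, and no further resolution of singularities is needed.
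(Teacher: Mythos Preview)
Your approach is correct in spirit and reaches the right conclusion, but it differs from the paper's argument and contains a sign/coefficient slip worth flagging.

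The paper does not localize to $\mathbb{C}^n/\Gamma_p$. Instead it chooses a \emph{neat} finite-index subgroup $\Gamma'\subset\Gamma$, so that $X'=\mathbb{B}/\Gamma'$ is globally smooth, and applies the finite-cover lemma to the Galois cover $f:X'\to X$. Your route---reducing to the local model $f:\mathbb{C}^n\to\mathbb{C}^n/G$ at each point---is equally valid and uses the same lemma; it trades the existence of a neat subgroup for the linearization of the stabilizer action, which you already know from the quotient-singularity discussion. Neither approach is harder; the paper's is slightly slicker because it produces a single global smooth cover in one stroke.

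One correction: the divisor $\Delta'$ you describe is not what you compute. From $K_{\mathbb{C}^n}+\Delta'=f^*K_X$ and Riemann--Hurwitz $K_{\mathbb{C}^n}=f^*K_X+R$ with $R=\sum (m_i-1)H_i$, you get $\Delta'=-R$, whose coefficients are $1-m_i\le -1$, not $1-1/m_i$. (The numbers $1-1/m_i$ are the coefficients of the natural orbifold boundary $\Delta$ on the \emph{quotient} $X$, not of $\Delta'$ upstairs.) Fortunately this does not damage the argument: $(\mathbb{C}^n,\Delta')$ with $\Delta'\le 0$ on a smooth variety is certainly klt, so the lemma still gives $(X,0)$ klt. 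The paper's proof has the analogous loose end---it says ``$X'$ has klt singularities (since $X'$ is smooth)'' without writing down the required $\Delta'=-R$ on $X'$---but the conclusion is unaffected for the same reason.
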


\begin{proof}
For our $X=\mathbb{B}/\Gamma$, we can always choose a neat sublattice $\Gamma'$ in $\Gamma$ of finite index such that 
$f:X'=\mathbb{B}/\Gamma'\rightarrow X$ is a finite morphism and $X'$ has klt singularities (since $X'$ is smooth). 
Then it follows from the above lemma that $X$ has klt singularities as well.
\end{proof}

Let us next consider the Baily-Borel compactification $X^{*}$ of $X$. Assume that the hypersurfaces $D_{H}^{*}$ are not 
self-intersected, the collection $\mathscr{H}_{X^{*}}$ of the 
hypersurfaces $D_{H}^{*}$ forms an arrangement on $X$, which while in general does not support a Cartier divisor on its 
Baily-Borel compactification $X^{*}$. 
Since the isotropy group at a generic point of $H\in\mathscr{H}$ fixes a codimension $1$ stratum, 
it is generated by a single complex reflection. Index each member $D_{H}^{*}$ of $\mathscr{H}_{X^{*}}$ (hence irreducible) 
simply by $D_{i}$, and denote the ramification order on a 
generic point of each $D_{i}$ by $m_{i}$. Write 
\[
a_{i}:=1-\frac{1}{m_{i}}.
\]
Let $\Delta^{*}=\sum a_{i}D_{i}$ be a $\mathbb{Q}$-divisor on $X^{*}$. 
It is clear that $X^{*}$ is normal, and $K_{X^{*}}+\Delta^{*}$ is $\mathbb{Q}$-Cartier by a result of Alexeev 
\cite[\S 3]{Alexeev}, based on earlier work of Mumford \cite{Mumford}. 
We then have such a log pair $(X^{*},\Delta^{*})$. 
We next show that $(X^{*},\Delta^{*})$ is lc.

\begin{theorem}
$(X^{*},\Delta^{*})$ is lc.
\end{theorem}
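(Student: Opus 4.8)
The plan is to reduce the statement to a purely local question and then invoke the structure of the Baily–Borel compactification: $X^{*}$ is covered by two kinds of local charts — interior charts of the form $(\mathbb{C}^{n}/G, 0)$ coming from a finite group $G\subset\mathrm{GL}_{n}(\mathbb{C})$, and cusp charts coming from the Satake/Baily–Borel construction at an isotropic line $I$. Since log canonicity of a pair is a local (indeed analytic-local) property, and since the $\mathbb{Q}$-divisor $\Delta^{*}=\sum a_{i}D_{i}$ with $a_{i}=1-1/m_{i}$ already has all coefficients in $[0,1)$, it suffices to check the log discrepancy inequality separately on each type of chart.

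First I would treat the interior (quotient-singularity) case. Here $\pi\colon \mathbb{C}^{n}\to\mathbb{C}^{n}/G$ is the local model, and the branch divisor of $\pi$ is supported on the images of the reflection hyperplanes of $G$. By the same reasoning as in the proof that $X$ is klt, I would pull back $K_{X^{*}}+\Delta^{*}$ under a suitable finite cover: locally $\mathbb{C}^{n}\to\mathbb{C}^{n}/G$ (or a neat cover globally), so that $K_{\mathbb{C}^{n}}+\Delta' = \pi^{*}(K_{X^{*}}+\Delta^{*})$, where $\Delta'$ is the ramification correction. Because each $D_{i}$ carries ramification order exactly $m_{i}$ and $a_{i}=1-1/m_{i}$, the Hurwitz formula forces $\Delta'=0$ on such a chart (the log pair is exactly the orbifold structure), so $(\mathbb{C}^{n},0)$ is trivially lc (in fact klt) and hence $(X^{*},\Delta^{*})$ is lc there, using the lemma on finite morphisms cited just above (which transfers lc, not only klt — or one simply observes that smoothness with zero boundary is lc and pushes forward). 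The cusp-divisor members of $\mathscr{H}_{X^{*}}$ do not meet the interior, so no further coefficients enter here.

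The main work, and the step I expect to be the obstacle, is the cusp case. Near a cusp $c\in X^{*}$ corresponding to an isotropic line $I$, the local structure of $X^{*}$ is that of a cone: using the Siegel-domain coordinates $(s,a)\in\mathbb{C}\times A$ recalled in the excerpt, $X^{*}$ near $c$ is the quotient of a punctured neighborhood of the vertex of an affine cone over the abelian torsor $X(I^{\perp})$ by the cusp unipotent group $\Gamma_{I}$, with the cusp point added as the cone vertex. One knows (this is classical — Baily–Borel, and the Alexeev/Mumford results already invoked for $\mathbb{Q}$-Cartierness of $K_{X^{*}}+\Delta^{*}$) that such cone singularities are log canonical with respect to the natural boundary; concretely I would take a toroidal resolution $X^{tor}\to X^{*}$ (the toroidal system $tor(I)=I$ of the excerpt), on which the boundary $D_{H}^{tor}$ is smooth and the exceptional locus over the cusp is a divisor whose discrepancy can be computed from the fact that the transcendental form of weight one defines $\mathscr{L}$ and the cusp is a canonical (Gorenstein, even) singularity of the cone; the coefficient-$1$ boundary from the exceptional cusp divisor is exactly the threshold, giving lc but not klt. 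Combined with the interior analysis and the locality of lc, this yields that $(X^{*},\Delta^{*})$ is lc. I would expect to spend most of the proof pinning down the discrepancy of the cusp-exceptional divisor in the toroidal resolution and checking it equals $-1$ (the log canonical value), since the $D_{i}$ with $i$ running over cusp-supported members complicate the bookkeeping; everything else is formal.
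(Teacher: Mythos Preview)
Your proposal is correct and follows essentially the same route as the paper: treat the interior via the already-established klt statement, and at a cusp pass to the toroidal compactification $X^{tor}\to X^{*}$ and show the exceptional divisor $X(I)$ has discrepancy $-1$. The paper's justification at the cusp is even more compressed than you anticipate: it simply regards the cusp as the limiting case $m\to\infty$ of a ramification divisor, so that the coefficient/discrepancy is $\lim_{m\to\infty}-(m-1)/m=-1$, with no further analysis of the cone structure.
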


\begin{proof}
We denote the restriction of $\Delta^{*}$ on $X$ by $\Delta$, then $(X,\Delta)$ is klt by the above theorem. 
So we need only consider the singularity at the cusp. For that, consider a partial resolution $f:X^{tor}\rightarrow X^{*}$ 
for which the toroidal system associate the isotropic line $I$ a degenerate subspace $J(I)=I$, so that $X(I)$ is an abelian 
hypertorsor. Then the corresponding ramification order $m$ of this divisor goes to $\infty$ since it comes from the cusp. 
Thus the discrepancy of the $f$-exceptional divisor $X(I)$ is 
\[
a(X(I),X^{*},\Delta^{*})=\lim_{m\to\infty}-\frac{m-1}{m}=-1.
\] 
Therefore, $(X^{*},\Delta^{*})$ is lc.
\end{proof}

\subsection{Logarithmic exponents}

Given an arrangement $(X,\mathscr{H})$, the projective structure on $X^{\circ}$ can be obtained from an 
affine structure on its tautological $\mathbb{C}^{\times}$-bundle $\pi:\mathscr{X}^{\circ}\rightarrow X^{\circ}$ 
according to the discussion of Section $\ref{sec:projective-structure}$. 
This affine structure determines a flat torsion free connection $\nabla$ on the (co)tangent bundle of $\mathscr{X}^{\circ}$, 
and has an infinitesimally simple degeneration at a generic point of the lift $\pi^{-1}(D_{i})$ of each $D_{i}$ of 
logarithmic exponents $0$ and $a_{i}$, where we still use $\pi$ to denote the projection of the corresponding completion 
$\pi:\overline{\mathscr{X}^{\circ}}\rightarrow \overline{X^{\circ}}$ which 
respects the scalar multiplication. 
We want to determine the logarithmic exponents of the lift $\pi^{-1}(E(L))$ of the exceptional divisor $E(L)$ for every 
$L\in\Lambda(\mathscr{H})$. 

Let us first have a look at what this kind of connection is like. 
Let $M$ be a connected complex manifold. The connection given in Definition $\ref{def:connection}$ defines a 
special class of holomorphic differentials on $M$ while of logarithmic forms since it extends to a section of 
$\Omega_{\overline{M}}(\log D)$ if $\overline{M}\supset M$ is a smooth completion of $M$ which adds to $M$ a 
normal crossing divisor $D$. This property is independent of the choice of the completion, and according to 
Deligne \cite{Deligne-1971,Deligne-1974} such a differential is always closed. 
If instead of being normal crossing, the divisor $D$ is \emph{arrangement-like}, that is, in local-analytic coordinates given 
by a product of linear forms $l_{1},\dots,l_{k}$, then a regular differential on $M$ is of a logarithmic form 
if and only if in any such coordinate chart it is locally a linear combination of the logarithmic forms 
$\frac{dl_{1}}{l_{1}},\dots,\frac{dl_{k}}{l_{k}}$ with analytic coefficients.

We call the connection $\nabla$ on the trivial vector bundle over $M$ with fiber $F$ a \emph{logarithmic connection} if its 
coefficients are. We denote its connection form by $\Omega\in H^{0}(M,\Omega_{\overline{M}})\otimes_{\mathbb{C}}\mathrm{End}(F)$. 
For such connections we have the following flatness criterion (also can be found in \cite{Looijenga-1999}).

\begin{lemma}\label{lem:flatness}
Let $M$ be a connected complex manifold. Suppose that $\overline{M}\supset M$ is a smooth
completion of $M$ which adds to $M$ an arrangement-like divisor $D$ whose irreducible
components $D_{i}$ are smooth. Suppose that $\overline{M}$ has no nonzero
regular $2$-forms and that any irreducible component $D_{i}$ has no nonzero regular
$1$-forms. Then a logarithmic connection $\nabla$ on the trivial vector bundle $M\times F$ over $M$ is
flat if and only if for every intersection
$I$ of two distinct irreducible components of $D$, the sum
$\sum_{D_{i}\supset I}\mathrm{Res}_{D_{i}}\nabla$ commutes with each of its terms $\mathrm{Res}_{D_{i}}\nabla$
($D_{i}\supset I$).
\end{lemma}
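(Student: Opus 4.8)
The plan is to follow the residue-theoretic argument of \cite{Looijenga-1999}. Write $\nabla=d+\Omega$ on the trivial bundle $M\times F$, where by hypothesis $\Omega$ is a logarithmic $1$-form valued in $\mathrm{End}(F)$; flatness is equivalent to the vanishing of the curvature $R:=d\Omega+\Omega\wedge\Omega$, an $\mathrm{End}(F)$-valued $2$-form on $M$. The first step is to observe that $R$ is in fact itself a logarithmic $2$-form on $\overline{M}$: near a general point of a component $D_{i}$ one may write $\Omega=\frac{dl_{i}}{l_{i}}A_{i}+\Theta$ with $\Theta$ regular in $l_{i}$, whence $\Omega\wedge\Omega=\frac{dl_{i}}{l_{i}}\wedge[A_{i},\Theta]+\Theta\wedge\Theta$ and $d\Omega$ have at worst a simple pole along $D_{i}$, and near a general point of a codimension-two stratum $I$ one picks up in addition terms $\frac{dl_{i}}{l_{i}}\wedge\frac{dl_{j}}{l_{j}}[A_{i},A_{j}]$, which are still logarithmic. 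By Deligne's theorem (already invoked in the excerpt for logarithmic differentials) a regular logarithmic form is closed, so $R$ is a closed logarithmic $2$-form on $\overline{M}$.

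The heart of the argument is a rigidity principle: under the stated cohomological hypotheses, a closed logarithmic $2$-form $\xi$ on $\overline{M}$ vanishes if and only if all its iterated residues $\mathrm{Res}_{I}\mathrm{Res}_{D_{i}}\xi$ vanish, the index $I$ running over the irreducible components of the $D_{i}\cap D_{j}$ ($j\neq i$). Indeed, if $\mathrm{Res}_{D_{i}}\xi=0$ for all $i$ then $\xi$ has no poles, so $\xi\in H^{0}(\overline{M},\Omega^{2}_{\overline{M}})=0$; and each $\mathrm{Res}_{D_{i}}\xi$ is a closed logarithmic $1$-form on the smooth component $D_{i}$, so if all of its residues along the divisors $I\subset D_{i}$ vanish it is a regular $1$-form on $D_{i}$, hence zero since $D_{i}$ carries no nonzero regular $1$-forms. (That $D$ is \emph{arrangement-like} rather than normal crossing is exactly what lets one speak of logarithmic forms and of residues along the $I$'s on the possibly non-normal-crossing locus inside $D_{i}$.) Thus $R=0$ is equivalent to the vanishing of $\mathrm{Res}_{I}\mathrm{Res}_{D_{i}}R$ for all such $I$ and all $i$ with $D_{i}\supset I$.

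It remains to identify these iterated residues. In arrangement-like coordinates at a general point of $I$, where $D$ is locally $\bigcup_{m:\,D_{m}\supset I}\{l_{m}=0\}$ with each $l_{m}|_{D_{i}}$ vanishing to first order along $I$, one finds $\mathrm{Res}_{D_{i}}(d\Omega)=\pm\,d(\mathrm{Res}_{D_{i}}\Omega)$, a \emph{regular} $1$-form on $D_{i}$ (the residue of a logarithmic $1$-form along a smooth component is holomorphic), so its residue along $I$ is zero; while $\mathrm{Res}_{D_{i}}(\Omega\wedge\Omega)=\pm\,[\mathrm{Res}_{D_{i}}\nabla,\ \Theta|_{D_{i}}]$, and taking the residue along $I$ kills the holomorphic part of $\Theta|_{D_{i}}$ and sends each $\frac{dl_{m}}{l_{m}}|_{D_{i}}$ with $D_{m}\supset I$ to $1$. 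Hence
\[
\mathrm{Res}_{I}\mathrm{Res}_{D_{i}}R\;=\;\pm\Big[\,\mathrm{Res}_{D_{i}}\nabla,\ \sum_{D_{m}\supset I}\mathrm{Res}_{D_{m}}\nabla\,\Big]\Big|_{I},
\]
so the vanishing of all of these is precisely the assertion that $\sum_{D_{m}\supset I}\mathrm{Res}_{D_{m}}\nabla$ commutes with each of its terms $\mathrm{Res}_{D_{i}}\nabla$ ($D_{i}\supset I$), which together with the first two steps gives both implications of the lemma.

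I expect the main obstacle to be making the rigidity principle of the second paragraph fully rigorous in the arrangement-like setting: one must check that the residue calculus, and the "closed $\Rightarrow$ determined by residues" statement, still apply when the residue along $D_{i}$ lands in logarithmic forms relative to a locus inside $D_{i}$ that need not be normal crossing — this is handled by restricting to general points of the strata and using the excerpt's characterization of logarithmic forms, but it is the delicate point. A secondary, bookkeeping difficulty is tracking signs and verifying that neither the holomorphic part of $\Omega$ nor the presence of more than two components through $I$ produces extra terms surviving the double residue; the computation above shows the $\Theta\wedge\Theta$ and $d(\text{regular})$ contributions all die.
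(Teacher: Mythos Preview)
Your proposal is correct and follows essentially the same argument as the paper's proof: identify the double residue $\mathrm{Res}_{I}\mathrm{Res}_{D_{i}}R(\nabla)$ with the commutator $[\mathrm{Res}_{D_{i}}\nabla,\sum_{D_{m}\supset I}\mathrm{Res}_{D_{m}}\nabla]$ (the $d\Omega$ contribution having only simple poles and hence zero double residue), and then bootstrap from the vanishing hypotheses---no regular $1$-forms on $D_{i}$ forces $\mathrm{Res}_{D_{i}}R=0$, and no regular $2$-forms on $\overline{M}$ forces $R=0$. The only difference is cosmetic: the paper carries out the local computation of $\Omega\wedge\Omega$ explicitly and does not invoke closedness of $R$ (which you cite via Deligne but do not actually use in your rigidity argument either).
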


\begin{proof}
First we prove the following fact.

\begin{assertion*}
The condition that $[\sum_{D_{i}\supset I}\mathrm{Res}_{D_{i}}\nabla,\mathrm{Res}_{D_{i}}\nabla]=0$ 
is equivalent to $\mathrm{Res}_{I}\mathrm{Res}_{D_{i}}\mathrm{R}(\nabla)=0$.
\end{assertion*}

The connection form $\Omega$ on $M$ could be locally written as
\[
\Omega=\sum_{i}f_{i}\frac{dl_{i}}{l_{i}}\otimes \rho_{i}+\sum_{i}\omega_{i}\otimes \rho_{i}',
\]
where $f_{i}$'s are holomorphic functions, $D_{i}$ is given by $l_{i}=0$,
$\omega_{i}$'s are holomorphic $1$-forms and $\rho_{i}$'s, $\rho_{i}'$'s are the endomorphisms of $F$.
Then we have
\begin{align*}
		\mathrm{Res}_{D_{i}}\Omega=f_{i}\vert_{l_{i}=0}\rho_{i}
	\end{align*}
	and
	\begin{align*}
		\Omega\wedge \Omega
		=&\sum_{i,j}f_{i}f_{j}\frac{dl_{i}}{l_{i}}
		\frac{dl_{j}}{l_{j}}\otimes \rho_{i}\rho_{j}+\sum_{i,j}f_{i}\frac{dl_{i}}{l_{i}}\wedge\omega_{j}\otimes
		(\rho_{i}\rho_{j}'-\rho_{j}'\rho_{i})\\
		&+\sum_{i,j}\omega_{i}\omega_{j}\otimes \rho_{i}'\rho_{j}'\\
		=&\sum_{i<j}f_{i}f_{j}\frac{dl_{i}}{l_{i}}
		\frac{dl_{j}}{l_{j}}\otimes(\rho_{i}\rho_{j}-\rho_{j}\rho_{i})+\sum\limits_{i,j}f_{i}\frac{dl_{i}}{l_{i}}\wedge\omega_{j}\otimes
		(\rho_{i}\rho_{j}'-\rho_{j}'\rho_{i})\\
		&+\sum\limits_{i,j}\omega_{i}\omega_{j}\otimes \rho_{i}'\rho_{j}'.
	\end{align*}
	Then
	\begin{align*}
		\mathrm{Res}_{D_{i}}\Omega\wedge \Omega=\sum_{j:j\neq i}f_{i}f_{j}
		\frac{dl_{j}}{l_{j}}\vert_{l_{i}=0}\otimes(\rho_{i}\rho_{j}-\rho_{j}\rho_{i})+
		\sum_{j:j\neq i}f_{i}\omega_{j}\otimes [\rho_{i},\rho_{j}'],
	\end{align*}
	As $I\subset D_{i}$ is given by $D_{j}\cap D_{i}$ for any $j\neq i$ with $D_{j}\supset I$,
	we have
	\begin{multline*}
		\mathrm{Res}_{I}\mathrm{Res}_{D_{i}}\Omega\wedge \Omega=\sum\limits_{j: D_{j}\supset I, j\neq i}f_{i}f_{j}\vert
		_{I}(\rho_{i}\rho_{j}-\rho_{j}\rho_{i})=\\
		\sum\limits_{j: D_{j}\supset I, j\neq i}f_{i}f_{j}\vert
		_{I}[\rho_{i},\rho_{j}]
		=[f_{i}\vert_{I}\rho_{i},\sum\limits_{j}f_{j}\vert_{I}\rho_{j}]
		=[\mathrm{Res}_{D_{i}}\Omega,\sum \mathrm{Res}_{D_{j}}\Omega].
\end{multline*}
Since the double residue of $d\Omega$ is obviously zero (any term of $d\Omega$ is of at most
simple pole), the assertion follows.
	
Let us continue to prove the lemma. Necessity is obvious, but it is also sufficient:
If the double residue of $\mathrm{R}(\nabla)$ is equal to zero, then
$\mathrm{Res}_{D_{i}}\mathrm{R}(\nabla)$
has no pole along $I\subset D_{j}\cap D_{i} \; \text{for} \; \forall D_{j}\neq D_{i}$ , hence
$\mathrm{Res}_{D_{i}}\mathrm{R}(\nabla)$ has as coefficients regular $1$-form along $D_{i}$, but there is no nonzero regular
$1$-form along $D_{i}$, we then have $\mathrm{Res}_{D_{i}}\mathrm{R}(\nabla)=0$. Again, $\mathrm{R}(\nabla)$
has no pole along $D_{i}$, hence
$\mathrm{R}(\nabla)$ has as coefficients regular $2$-form everywhere, but there is no nonzero regular
$2$-form on $\overline{M}$, we then have $\mathrm{R}(\nabla)=0$.
\end{proof}

\begin{remark}
Clearly the connection given in Definition $\ref{def:connection}$ is of this type.
\end{remark}

Now consider the tautological $\mathbb{C}^{\times}$-bundle $\pi:\mathscr{X}^{J_{\mathscr{H}}}\rightarrow X^{J_{\mathscr{H}}}$ 
for the intermediate Looijenga compactification of our ball quotient $X=\mathbb{B}/\Gamma$. 
For simplicity we write $X^{J}$ for $X^{J_{\mathscr{H}}}$ in what follows. 
The corresponding collection of the strict transforms of $D_{i}$ forms an arrangement on $X^{J}$, denoted by $\mathscr{H}_{X^{J}}$. 
By abuse of notation, we still denote the strict transform of $D_{i}$ in $X^{J}$ by $D_{i}$. 
The arrangement $\mathscr{H}_{X^{J}}$ on $X^{J}$ is lifted to an arrangement 
$\mathscr{H}_{\mathscr{X}^{J}}$ on $\mathscr{X}^{J}$, 
we denote the lift of each irreducible member $D_{i}$ in $\mathscr{H}_{X^{J}}$ by $\mathscr{D}_{i}$, 
and the lift of each $L\in\Lambda(\mathscr{H}_{X^{J}})$ by $\mathscr{L}$. So there also defines 
an affine structure on $\mathscr{X}^{\circ}$ with infinitesimally simple degeneration at a generic point of $\mathscr{D}_{i}$ 
of logarithmic exponents $0$ with multiplicity $n$ and $a_{i}$ with multiplicity $1$. We want to determine the 
logarithmic exponents of the lift $\pi^{-1}(E(L))$ of the exceptional divisor $E(L)$ for each $L\in\Lambda(\mathscr{H}_{X^{J}})$. 
Let $\nabla$ be such a torsion free flat connection defined on the tangent bundle of $\mathscr{X}^{\circ}$ that determines 
the above affine structure. Denote the generic fiber of the tangent bundle of $\mathscr{X}^{\circ}$ by $F$. Let 
$\rho_{\mathscr{L}}\in\mathrm{End}(F)$ denote the projection with kernel the generic fiber of the tangent bundle of 
$\mathscr{L}$ and image the normal space $N(\mathscr{L},\mathscr{X}^{J})$. 
So the residue of the connection $\nabla$ along each $\mathscr{D}_{i}$ is written as $a_{i}\rho_{\mathscr{D}_{i}}$.

\begin{lemma}
Let $\nabla$ be a torsion free flat logarithmic connection as above with residues $a_{i}\rho_{\mathscr{D}_{i}}$ along each 
$\mathscr{D}_{i}$, and let $\mathscr{L}=\pi^{-1}(L)$ for each $L\in\Lambda(\mathscr{H}_{X^{J}})$. Then the transformation 
$\sum_{\mathscr{D}_{i}\supset\mathscr{L}}a_{i}\rho_{\mathscr{D}_{i}}$ is of the form $a_{\mathscr{L}}\rho_{\mathscr{L}}$, 
where 
\[
a_{\mathscr{L}}=\frac{1}{\mathrm{codim}(\mathscr{L})}\sum_{i:\mathscr{D}_{i}\supset\mathscr{L}}a_{i}.
\]
\end{lemma}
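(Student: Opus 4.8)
The plan is to reduce the statement to a small piece of linear algebra on $V:=N(\mathscr{L},\mathscr{X}^{J})$ and then feed in the flatness of $\nabla$. For each $\mathscr{D}_{i}\supset\mathscr{L}$, the endomorphism $\rho_{\mathscr{D}_{i}}$ is a rank-one projection onto the normal line of $\mathscr{D}_{i}$ whose kernel, being the tangent space of $\mathscr{D}_{i}$, contains the tangent space of $\mathscr{L}$; hence $S:=\sum_{\mathscr{D}_{i}\supset\mathscr{L}}a_{i}\rho_{\mathscr{D}_{i}}$ kills the tangent space of $\mathscr{L}$, just as $\rho_{\mathscr{L}}$ does, so $S$ induces an operator $\bar S$ on $V$ and it suffices to prove $\bar S=a_{\mathscr{L}}\cdot\mathrm{id}_{V}$. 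Since the induced projections $\bar\rho_{i}$ still have rank one, $\mathrm{tr}\,\bar S=\sum_{\mathscr{D}_{i}\supset\mathscr{L}}a_{i}$, so once $\bar S$ is shown to be scalar its value is necessarily $(\mathrm{codim}\,\mathscr{L})^{-1}\sum_{i:\mathscr{D}_{i}\supset\mathscr{L}}a_{i}=a_{\mathscr{L}}$, which would finish the proof.

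To see that $\bar S$ is scalar, I would first deduce from flatness that $\bar S$ commutes with every $\bar\rho_{i}$ with $\mathscr{D}_{i}\supset\mathscr{L}$. Indeed $\mathrm{R}(\nabla)=0$, so every iterated residue of the curvature vanishes, and the double-residue computation carried out in the proof of Lemma~\ref{lem:flatness} (the Assertion there), applied along a flag of components cutting out $\mathscr{L}$, identifies $\mathrm{Res}_{\mathscr{L}}\mathrm{Res}_{\mathscr{D}_{i}}\mathrm{R}(\nabla)$ with the commutator $[\,\mathrm{Res}_{\mathscr{D}_{i}}\nabla,\ \sum_{\mathscr{D}_{j}\supset\mathscr{L}}\mathrm{Res}_{\mathscr{D}_{j}}\nabla\,]=[\,a_{i}\bar\rho_{i},\ \bar S\,]$; since $a_{i}=1-1/m_{i}\neq 0$ this gives $[\bar\rho_{i},\bar S]=0$. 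Consequently $\bar S$ preserves each line $\mathrm{im}\,\bar\rho_{i}$, acting on it by a scalar; by the linearization hypothesis these lines span $V$, so one may pick a basis of $V$ among them, and in that basis $\bar S$ is diagonal. Thus $\bar S$ is semisimple; let $V=\bigoplus_{\mu}V_{\mu}$ be its eigenspace decomposition. Each $\bar\rho_{i}$ commutes with $\bar S$ and has one-dimensional image, so that image lies in a single $V_{\mu(i)}$, while $\bar\rho_{i}$ vanishes on all the other $V_{\mu}$; moreover the lines $\mathrm{im}\,\bar\rho_{i}$ contained in a given $V_{\mu}$ span it.

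It remains to show there is only one eigenvalue. If there were at least two, then near the generic point of $\mathscr{L}$ the collection $\{\mathscr{D}_{i}\supset\mathscr{L}\}$ would decompose as an orthogonal product of subarrangements lying in the complementary normal directions $V_{\mu}$, which is incompatible with $\mathscr{L}$ being an irreducible member of $\Lambda(\mathscr{H}_{X^{J}})$ carrying the flat structure descended from the ball quotient. Hence $V$ is a single eigenspace, $\bar S=a_{\mathscr{L}}\cdot\mathrm{id}_{V}$, and so $S=a_{\mathscr{L}}\rho_{\mathscr{L}}$. The reduction in the first paragraph and the trace computation are routine; the main obstacle is the bookkeeping in the second paragraph, namely upgrading the flatness criterion of Lemma~\ref{lem:flatness} — phrased for intersections of two components — to the commutation relations at the possibly deep stratum $\mathscr{L}$ by iterating residues along a flag without introducing spurious terms, and the delicate point of the last paragraph is to justify that the flat structure at $\mathscr{L}$ genuinely forbids such an orthogonal splitting.
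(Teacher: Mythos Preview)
Your argument tracks the paper's closely through the reduction to $V=N(\mathscr{L},\mathscr{X}^{J})$ and the use of flatness to get $[\bar S,\bar\rho_{i}]=0$. Your worry about ``upgrading'' Lemma~\ref{lem:flatness} from codimension-two intersections to the deeper stratum $\mathscr{L}$ is unnecessary: the Assertion inside that lemma's proof computes $\mathrm{Res}_{I}\mathrm{Res}_{D_{i}}\mathrm{R}(\nabla)$ for an arbitrary intersection $I$, and the calculation goes through verbatim with $I=\mathscr{L}$; since $\mathrm{R}(\nabla)=0$, the commutator vanishes at once, exactly as the paper uses it.

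Where you diverge from the paper is the final step, and there the gap you flag as ``delicate'' is genuine. You try to exclude a nontrivial eigenspace decomposition of $V$ by claiming that it would force the local arrangement at $\mathscr{L}$ to split as a product, and that this is ``incompatible with $\mathscr{L}$ being an irreducible member of $\Lambda(\mathscr{H}_{X^{J}})$.'' But irreducibility of $\mathscr{L}$ as a subvariety says nothing about indecomposability of the normal arrangement: two reflecting hyperplanes meeting transversally give an irreducible $\mathscr{L}$ whose local arrangement is visibly $A_{1}\times A_{1}$, and in that model $\bar S=\mathrm{diag}(a_{1},a_{2})$. Appealing vaguely to ``the flat structure descended from the ball quotient'' does not by itself rule this out; you have not said what additional constraint that structure imposes on the eigenspaces of $\bar S$.

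The paper avoids this issue by a different, more concrete route. Rather than an irreducibility argument, it asserts that the collection $\{\mathscr{D}_{i}\supset\mathscr{L}\}$ contains $\mathrm{codim}(\mathscr{L})+1$ members of which every $\mathrm{codim}(\mathscr{L})$-element subset is in general position. Granting this, the conclusion is pure linear algebra: an endomorphism of a $k$-dimensional space preserving $k+1$ lines, any $k$ of which span, is scalar (take $k$ of them as a basis so the map is diagonal; the $(k{+}1)$-st line has all coordinates nonzero and being preserved forces equal diagonal entries). So the paper trades your soft irreducibility claim for a hard combinatorial statement about the arrangement, and that is where the actual content of the step sits.
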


\begin{proof}
It is clear that the sum $\sum_{\mathscr{D}_{i}\supset\mathscr{L}}a_{i}\rho_{\mathscr{D}_{i}}$ is zero on the generic tangent 
space of $\mathscr{L}$ and preserves $N(\mathscr{L},\mathscr{X}^{J})$. Since this sum commutes with each of its terms 
(a direct consequence of Lemma $\ref{lem:flatness}$), it will 
preserve $\mathscr{D}_{i}$ and $N(\mathscr{D}_{i},\mathscr{X}^{J})$ for each $\mathscr{D}_{i}\supset\mathscr{L}$. 
Since the collection $\{\mathscr{D}_{i}\mid\mathscr{D}_{i}\supset\mathscr{L}\}$ contains $\mathrm{codim}(\mathscr{L})+1$ 
members of which each $\mathrm{codim}(\mathscr{L})$-element subset is in general position, the induced transformation 
on $N(\mathscr{L},\mathscr{X}^{J})$ will be scalar. And this scalar operator must have the same trace as the sum 
$\sum_{\mathscr{D}_{i}\supset\mathscr{L}}a_{i}\rho_{\mathscr{D}_{i}}$, so the scalar equals to the number 
$a_{\mathscr{L}}$ as above. Since $N(\mathscr{L},\mathscr{X}^{J})$ is the span of the spaces $N(\mathscr{D}_{i},\mathscr{X}^{J})$ 
for which $\mathscr{D}_{i}\supset\mathscr{L}$, the transformation 
$\sum_{\mathscr{D}_{i}\supset\mathscr{L}}a_{i}\rho_{\mathscr{D}_{i}}$ is of the form $a_{\mathscr{L}}\rho_{\mathscr{L}}$.
\end{proof}

\begin{remark}\label{rem:exponents}
For the projectivized space $X^{J}$, if we write 
\[
a_{L}=\frac{1}{\mathrm{codim}(L)}\sum_{i:D_{i}\supset L}a_{i},
\]
since $\mathrm{codim}(L)=\mathrm{codim}(\mathscr{L})$, and $D_{i}$ contains $L$ if and only if $\mathscr{D}_{i}$ 
contains $\mathscr{L}$, we have that 
\[
a_{L}=a_{\mathscr{L}}.
\]
\end{remark}

\begin{lemma}\label{lem:exponents}
Given an irreducible intersection $\mathscr{L}\in\Lambda(\mathscr{H}_{\mathscr{X}^{J}})$ and denote by $\mathscr{E}$ 
the exceptional divisor of the blow-up of $\mathscr{L}$ in $\mathscr{X}^{J}$. Then the affine structure on $\mathscr{X}^{\circ}$ 
is of infinitesimally simple type along $\mathscr{E}^{\circ}$ (in the sense of Definition $\ref{def:connection}$) 
with logarithmic exponents $0$ and $a_{\mathscr{L}}-1$.
\end{lemma}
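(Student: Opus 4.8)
The plan is to verify conditions (1)--(3) of Definition~\ref{def:connection} for $D=\mathscr{E}$ by a local computation at a generic point of $\mathscr{E}^{\circ}$, the essential inputs being the residue identity just proved and the standard description of how a logarithmic connection behaves under the blow-up of an intersection stratum. Since the open stratum $\mathscr{E}^{\circ}$ is by construction disjoint from the strict transforms of the $\mathscr{D}_{i}$ and from the exceptional divisors attached to the other members of $\Lambda(\mathscr{H}_{\mathscr{X}^{J}})$, a generic $z\in\mathscr{E}^{\circ}$ lies on no boundary component but $\mathscr{E}$, so it suffices to study the extension of $\nabla$ across $\mathscr{E}$ near such a $z$. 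I would choose a generic point $z'$ of $\mathscr{L}$ lying below $z$ and local coordinates $(u_{1},\dots,u_{r},x_{1},\dots,x_{c})$ centred at $z'$, where $c=\mathrm{codim}(\mathscr{L})$, so that $\mathscr{L}=\{x_{1}=\cdots=x_{c}=0\}$, the coordinates are adapted so that $\rho_{\mathscr{L}}$ annihilates the $du_{k}$, and each $\mathscr{D}_{i}\supset\mathscr{L}$ is cut out by a linear form $\ell_{i}(x)$; the members $\mathscr{D}_{j}\not\supset\mathscr{L}$ avoid a neighbourhood of $z'$, so there
\[
\nabla \;=\; \nabla_{0}\;+\;\sum_{i:\,\mathscr{D}_{i}\supset\mathscr{L}}a_{i}\,\frac{d\ell_{i}}{\ell_{i}}\otimes\rho_{\mathscr{D}_{i}},
\qquad \nabla_{0}\ \text{holomorphic near }z',
\]
and by the preceding lemma $\sum_{i:\,\mathscr{D}_{i}\supset\mathscr{L}}a_{i}\rho_{\mathscr{D}_{i}}=a_{\mathscr{L}}\rho_{\mathscr{L}}$.

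Next I would blow up $\mathscr{L}$, writing $f\colon\tilde{\mathscr{X}}\to\mathscr{X}^{J}$ for the blow-up, and work in the chart $x_{1}=t$, $x_{j}=t\,y_{j}$ $(2\le j\le c)$, where $\mathscr{E}=\{t=0\}$. Since each $\ell_{i}$ is homogeneous linear in $x$, one has $\ell_{i}=t\,\tilde\ell_{i}(u,y)$ with $\tilde\ell_{i}$ holomorphic and nonvanishing at $z$, hence $d\ell_{i}/\ell_{i}=dt/t+(\text{holomorphic near }z)$; substituting, $\nabla$ acquires a logarithmic pole along $\mathscr{E}$ whose connection form has $dt/t$-coefficient $a_{\mathscr{L}}\rho_{\mathscr{L}}$ modulo holomorphic terms. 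The remaining, and genuinely delicate, step is to rewrite $\nabla$ in an honest logarithmic frame $\{du_{k},\,dt/t,\,dy_{j}\}$ of $\Omega_{\tilde{\mathscr{X}}}(\log\mathscr{E})$ --- the sheaf appearing in Definition~\ref{def:connection} --- and to check that in this frame the a priori non-logarithmic and higher-order poles cancel, so that $\nabla$ extends to $\Omega_{\tilde{\mathscr{X}}}(\log\mathscr{E})$ with a simple pole whose residue is semisimple, preserves $\Omega_{\mathscr{E}}$, and has eigenvalue $0$ on the base directions $\langle du_{k}\rangle$ (pulled back from $\Omega_{\mathscr{L}}$) and $a_{\mathscr{L}}-1$ on the conormal direction $\langle dt/t\rangle$ (the quotient $\mathcal{O}_{\mathscr{E}}$) together with the relative directions $\langle dy_{j}\rangle$ of $\mathscr{E}\to\mathscr{L}$. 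The drop from $a_{\mathscr{L}}$ to $a_{\mathscr{L}}-1$ is exactly the correction produced by the blow-up --- the modification of the cotangent bundle, together with the tautological $\mathbb{C}^{\times}$-bundle structure on $\mathscr{X}^{J}$ --- and is the counterpart, on the level of connections, of the corresponding change of the developing map recorded in the local models of Section~\ref{sec:projective-structure}.

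I expect the frame change of the last step to be the one genuine obstacle: extracting, from the holomorphic remainder together with the coordinate change $dx_{1}=dt$, $dx_{j}=y_{j}\,dt+t\,dy_{j}$, the precise residue and confirming that it has only the two eigenvalues $0$ and $a_{\mathscr{L}}-1$. Granting that, conditions (1)--(3) follow, semisimplicity being automatic since the residue is built from commuting projections; the localisation and the computation of the $dt/t$-coefficient are then formal. For the explicit bookkeeping I would follow the analogous computations in \cite{Couwenberg-Heckman-Looijenga} and \cite{Shen-2022}.
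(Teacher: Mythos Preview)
Your outline is correct and coincides with the paper's approach: the paper's entire proof is the single sentence ``The same argument as in Lemma 2.21 of \cite{Couwenberg-Heckman-Looijenga}'', and what you have sketched is precisely that argument, correctly isolating the frame change on the blow-up as the only nontrivial step. One small correction: the shift $a_{\mathscr{L}}\to a_{\mathscr{L}}-1$ is produced solely by the blow-up's effect on the cotangent bundle (in your notation the holomorphic part $\nabla_{0}$ itself acquires a logarithmic pole along $\mathscr{E}$ with residue $-1$ on the $dt/t$ and $dy_{j}$ directions after the change of frame $dx_{j}=y_{j}\,dt+t\,dy_{j}$), and the $\mathbb{C}^{\times}$-bundle structure plays no role in this shift.
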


\begin{proof}
The same argument as in Lemma 2.21 of \cite{Couwenberg-Heckman-Looijenga}.
\end{proof}

\subsection{Conical singularities}

We discuss the conical singularities for a ball quotient in view of the hyperbolic geometry in this subsection.

We know that a ball quotient $X=\mathbb{B}/\Gamma$ is an orbifold. It actually also can be viewed as a special case 
(with isotropy group being finite) of a more general structure, namely the structure of a cone-manifold \cite{Shen-2021}.
Roughly speaking, a cone-manifold is a manifold endowed with a particular kind of singular Riemannian metric. 

Let $Y$ be a complete connected $n$-dimensional Riemannian manifold and $G$ be a group of isometries of $Y$. Then a 
\emph{$(Y,G)-$manifold} is a space $M$ equipped with an atlas of charts with homeomorphism into $Y$ such that the 
transition maps lie in $G$. Let $G_{p}$ denote the stabilizer of a point $p$ and $Y_{p}$ the unit tangent sphere of $T_{p}Y$. 
Then $(Y_{p},G_{p})$ is a model space of one dimension lower.

Following \cite{Thurston}, a \emph{$(Y,G)-$cone-manifold} can be defined inductively by dimension as follows. 
If $\dim Y=1$, a $(Y,G)-$cone-manifold is just a $(Y,G)-$manifold. If $\dim Y>1$, a $(Y,G)-$cone-manifold is a space 
$M$ such that each point $x\in M$ has a neighborhood modelled, using the geometry of $Y$, on the cone over a compact and 
connected $(Y_{p},G_{p})-$cone-manifold $S_{x}M$, called the \emph{unit tangent cone} to $M$ at $x$.

The unit tangent cone is a special case of a \emph{spherical cone-manifold}, which is an 
$(S^{n},\mathrm{O}(n+1))-$cone-manifold. Let $K$ and $N$ be two compact spherical cone-manifolds. Assuming that each of 
them is connected unless it is isomorphic to $S^{0}$, 
we can define the \emph{join} of $K$ and $N$, denoted by $K*N$, as a spherical cone-manifold obtained from the disjoint 
union of $K$ and $N$ by adding a spherical arc $[a,b]$ of length $\pi/2$ between every pair of points $a\in K$ and $b\in N$. 
For instance, $S^{m}*S^{n}\cong S^{m+n+1}$. The unit tangent cones of the join are computed as follows
\begin{equation*}
S_{x}(K*N)\cong
\left\{
\begin{aligned}
&(S_{x}K)*N  && \text{if $x\in K$,} \\
&K*(S_{x}N)  && \text{if $x\in N$, and} \\
&S^{0}*(S_{a}K)*(S_{b}N)  && \text{if $x$ lies in the open arc $(a,b)$}.
\end{aligned}
\right.
\end{equation*}

We say that $M$ is \emph{prime} if it can not be expressed by a join $M\cong S^{0}\ast N$. Then we have the following 
factorization theorem.
\begin{theorem}[{\cite[Theorem 5.1]{McMullen}}]
A compact spherical cone-manifold $M$ can be canonically expressed as the join $M\cong S^{k}* N$ of a sphere 
and a prime cone-manifold $N$.
\end{theorem}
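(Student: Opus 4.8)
The natural strategy is to pass to Euclidean cones, where the spherical join becomes an honest metric product, and then to prove a de Rham–type splitting theorem. For a compact spherical cone-manifold $M$ of dimension $n$, let $C(M)$ denote the Euclidean cone over $M$: an $(n+1)$-dimensional \emph{flat cone-manifold}, locally modelled on $\mathbb{R}^{n+1}$ away from a singular locus $\Sigma$ of codimension at least $2$, where $\Sigma$ is itself (inductively) a flat cone-manifold carrying the Euclidean cone structure induced by the cone locus of $M$. The elementary but crucial point is that $C(K*L)\cong C(K)\times C(L)$ for any spherical cone-manifolds $K,L$, with the product metric; in particular $C(S^{k}*N)\cong\mathbb{R}^{k+1}\times C(N)$, and $N$ is prime (i.e. not a join $S^{0}*N'$) precisely when $C(N)$ admits no isometric splitting of the form $\mathbb{R}\times C(N')$, that is, no Euclidean de Rham factor. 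Thus the theorem reduces to the assertion that $C(M)$ has a canonical isometric splitting $C(M)\cong\mathbb{R}^{m}\times Z$ with $\mathbb{R}^{m}$ a maximal Euclidean factor, with $Z$ again the Euclidean cone $C(N)$ over a spherical cone-manifold, whereupon $k:=m-1$ and $N$ (the link of the apex of $Z$) do the job.

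\textbf{Existence.} I would work on the smooth locus $C(M)^{\mathrm{sm}}=C(M)\setminus\Sigma$, a flat Riemannian manifold. Let $V$ be the space of globally parallel vector fields on $C(M)^{\mathrm{sm}}$; by flatness and connectedness the evaluation $V\to T_{x}C(M)^{\mathrm{sm}}$ has image of locally constant dimension $m$, defining a parallel integrable distribution whose leaves are flat totally geodesic copies of $\mathbb{R}^{m}$. The classical de Rham argument—local splitting off a parallel distribution, then globalization using metric completeness—would yield $C(M)^{\mathrm{sm}}\cong\mathbb{R}^{m}\times Z^{\mathrm{sm}}$, provided the splitting can be carried across $\Sigma$. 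Here I would induct on $\dim M$: $\Sigma$ is a flat cone-manifold of strictly smaller dimension, it already carries its own canonical splitting by the inductive hypothesis, and a removable-singularity argument (the obstruction to extending a parallel distribution over a set of codimension $\ge 2$ vanishes, since near a point of $\Sigma$ the space $C(M)$ is locally a product of lower-dimensional cones to which induction applies) lets one glue. That $Z$ is again a cone follows from uniqueness: the radial scaling $\mathbb{R}_{>0}\curvearrowright C(M)$ fixes the apex $o$, hence must preserve the canonical splitting, so $o=(0,z_{0})$, the factor $Z$ is invariant under scaling about $z_{0}$, and $Z=C(N)$ with $N$ the link of $z_{0}$. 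Taking links of $o$ in $C(M)\cong C(S^{m-1})\times C(N)\cong C(S^{m-1}*N)$ then gives $M\cong S^{m-1}*N=S^{k}*N$, with $N$ prime because $Z$ has no further Euclidean factor by construction.

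\textbf{Canonicity and the main obstacle.} Canonicity is then automatic: the Euclidean factor $\mathbb{R}^{m}$ is spanned by the globally parallel vector fields on $C(M)^{\mathrm{sm}}$, an intrinsic datum, so $m$ and the isometry type of $Z$—hence $k$ and $N$—are uniquely determined; the degenerate cases ($M$ a round sphere, where $N$ is empty, or $M=S^{0}$) are checked by hand. The hard part is the de Rham splitting in the singular category: flat cone-manifolds may have cone angles larger than $2\pi$, so $C(M)$ need not be $\mathrm{CAT}(0)$ or Alexandrov with a lower curvature bound, and the usual globalization of the de Rham decomposition for metric spaces is not directly available. This is exactly why the proof should be structured as an induction on dimension that exploits the recursive structure of cone-manifolds—every point has a neighborhood that is a cone over a lower-dimensional spherical cone-manifold—rather than invoking a general splitting theorem. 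An equivalent route avoiding Euclidean cones altogether is to run the same induction directly on $M$: for each $x\in M$ the unit tangent cone $S_{x}M$ has, by induction, a canonical join decomposition $S_{x}M\cong S^{k(x)}*P_{x}$ with $P_{x}$ prime; these local join structures are compatible on overlaps, and patching them produces the global decomposition, the delicate step again being the behaviour along the cone locus, where $k(x)$ can jump.
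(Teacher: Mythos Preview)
The paper does not prove this statement at all: it is quoted verbatim as Theorem~5.1 of McMullen's paper and used as background for the discussion of cone-manifold stratifications. There is therefore no proof in the present paper to compare your proposal against.

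That said, your sketch is a plausible route to McMullen's result. The passage to Euclidean cones, where the spherical join becomes a metric product, is the standard reduction, and identifying the maximal Euclidean factor via globally parallel vector fields on the smooth locus is natural. You are right to flag the main difficulty: cone angles may exceed $2\pi$, so $C(M)$ need not be $\mathrm{CAT}(0)$ and the off-the-shelf metric de Rham theorems do not apply; the induction on dimension using the recursive cone structure is the correct workaround. The step that remains genuinely delicate in your outline is the ``removable-singularity argument'' for extending the parallel distribution across $\Sigma$---you assert that the obstruction vanishes in codimension $\geq 2$, but this requires care since the holonomy of the flat connection around components of $\Sigma$ is nontrivial in general, and one must check that the parallel fields are actually invariant under this local holonomy. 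If you want to compare with McMullen's own argument, consult \S5 of his paper directly; his treatment is brief and organized somewhat differently, working more intrinsically with the stratification $M[k]$ rather than passing to the Euclidean cone.
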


Given a $(Y,G)-$cone-manifold $M$ of real dimension $n$. A local model for $M$ can be constructed as follows. 
Let $U$ be a neighborhood of $x$ in $M$. Locally, there is an exponential map from a small ball $B^{n}$ in $T_{p}Y$ to 
a neighborhood of $p$ in $Y$
\[
\exp: B^{n}\rightarrow Y,
\]
such that the pull back of the metric on $Y$ endows the small ball $B^{n}$ with a metric $g$, and the metric cone 
$(C_{g}(K),0)$ over $K$, where $K\cong S_{x}M$, is isometric to $(U,x)$. Then the metric cone $(C_{g}(K),0)$ provides an 
isometric local model for $(U,x)$.

There is a natural stratification for $M$ induced by this singular metric. 
Its $k$-dimensional strata are the connected components of the locus characterized by 
\[
M[k]=\{x\in M\mid S_{x}M\cong S^{k-1}*N, \text{with $N$ prime}\}.
\]
We refer to the prime factor $N$ above as the \emph{normal cone} to $M[k]$ at $x$, which we write as 
$N_{x}M$ in what follows. Thus the above decomposition can be rewritten as 
\[
S_{x}M\cong S^{k-1}*N_{x}M
\]
for all $x\in M[k]$.

We have the local model $(C_{g}(S_{x}M),0)$ for a neighborhood $U$ of $x\in M[k]$, it is not hard to see that 
\[
M[k]\cap U\cong C_{g}(S^{k-1}).
\]
This also shows that the geodesic ray from $x$ to any nearby point $y\in M[k]$ entirely lies in $M[k]$ so that 
$M[k]$ is totally geodesic in $M$. 
It follows that the loci $M[k]$ ($0\leq k\leq n$) gives rise to a stratification for $M$ for which the top-dimensional 
stratum $M[n]$ is open and dense in $M$. Note that $M[n-1]$ is empty, and thus $M$ is the metric completion of $M[n]$.

Now let us write the cone metric in terms of coordinates, at least locally. We take the flat metric with a cone singularity 
along a real codimension 2 submanifold for example. 
Write $\mathbb{R}^{n}=\mathbb{R}^{2}\times\mathbb{R}^{n-2}$ 
and let $D$ denote the codimension 2 submanifold $\{0\}\times\mathbb{R}^{n-2}$. Take polar coordinates $r,\theta$ on $\mathbb{R}^{2}$ 
and standard coordinates $w_{i}$ on $\mathbb{R}^{n-2}$. Fix $\beta\in (0,1)$ and the following singular metric 
\[
g=dr^{2}+\beta^{2}r^{2}d\theta^{2}+\sum dw_{i}^{2}
\]
is called the \emph{standard cone metric} with cone angle $2\pi\beta$ along $D$.

We can also take a complex point of view, let $\mathbb{C}_{\beta}$ denote the complex plane $\mathbb{C}$ endowed with the metric 
$\beta^{2}|\xi|^{2(\beta-1)}|d\xi|^{2}$, then there is an isometry from the above singular metric 
on $\mathbb{R}^{2m}$ to the Riemannian product $\mathbb{C}_{\beta}\times\mathbb{C}^{m-1}$, realized by taking the coordinate 
$\xi=r^{\frac{1}{\beta}}e^{\sqrt{-1}\theta}$ on $\mathbb{C}_{\beta}$ and taking standard complex coordinates on $\mathbb{C}^{m-1}$. 
This indeed describes the flat K\"{a}hler metric with a cone singularity along a smooth hypersurface. 
It also provides a local model for the tangent cone at a generic point of a divisor along which the concerned K\"{a}hler metric 
is of a conical type. For foundations of K\"{a}hler metrics with cone singularities along a divisor, interested readers can 
consult \cite{Donaldson}.

Next, we discuss the conical structure on a stratum of higher codimension in a given complex cone-manifold $M$, 
for which we assume that the stratum inherits a complex structure from $M$ (so that it is always of even real dimension). 
Let $K$ denote an irreducible stratum of complex codimension $k$. Then the normal slice to $K$ at a point $z\in K$ is a union of 
`complex rays' each of which is swept out by a real ray along a circle. The space of complex rays is the \emph{complex link} of 
the stratum, which is of a complex spherical cone-manifold structure of dimension $k-1$. Then a Seifert fiber space over the 
complex link with generic fiber a circle of length $\gamma(K)$ is the \emph{real link} of that stratum. 
We call the length $\gamma(K)$ the \emph{scalar cone angle} at $K$.

Under this setting, our ball quotient $X=\mathbb{B}/\Gamma$ is a complete $(\mathbb{CH}^{n},\mathrm{PU}(1,n))-$cone-manifold, 
where $\mathbb{CH}^{n}$ is the $n$-dimensional complex hyperbolic space and $\mathrm{PU}(1,n)$ (identified with 
$\mathrm{SU}(1,n)$ modulo its center) is its holomorphic isometry group. For simplicity it will also be called a 
\emph{(complex) hyperbolic cone-manifold} in what follows.

The collection $\mathscr{H}_{X^{J}}$ of the hypersurfaces $D_{i}$ forms an arrangement on $X^{J}$. 
Consider the log pair $(X^{J},\Delta^{J})$ where $\Delta^{J}=\sum a_{i}D_{i}$. Then the cone-manifold $X$ has a cone angle 
$2\pi(1-a_{i})$ at a generic point of the divisor $D_{i}$. The curvature model around that point in terms of local 
coordinates is given by 
\[
-\sqrt{-1}\partial\bar{\partial}\log \big(1-(|z_{1}|^{2(1-a_{i})}+\sum_{j=2}^{n}|z_{j}|^{2})\big).
\]

On a stratum $L^{\circ}$ of codimension $k>1$, assume that each $D_{i}$ is without self-intersection, define 
\[
a_{L}=(\mathrm{codim}L)^{-1}\sum_{i:D_{i}\supset L}a_{i}
\]
as in Remark $\ref{rem:exponents}$, then we have the following characterization for the scalar cone angle at $L^{\circ}$.
\begin{proposition}
The scalar cone angle at $L^{\circ}$ is $2\pi(1-a_{L})$.
\end{proposition}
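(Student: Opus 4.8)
The plan is to compute the scalar cone angle directly from the local model of the projective developing map near the stratum $E(L)^\circ$, using the logarithmic exponent $a_{\mathscr{L}} = a_L$ determined in Lemma~\ref{lem:exponents} together with Remark~\ref{rem:exponents}. Recall that the scalar cone angle $\gamma(K)$ at an irreducible stratum $K$ was defined as the length of the generic circle fiber of the real link over the complex link. So the first step is to identify, in the hyperbolic cone-manifold $X$, what the normal slice to $L^\circ$ looks like: by Lemma~\ref{lem:exponents} the affine structure on $\mathscr{X}^\circ$ degenerates infinitesimally simply along the lift $\mathscr{E}^\circ$ of the exceptional divisor with logarithmic exponents $0$ and $a_{\mathscr{L}}-1$. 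Passing to the projectivization and using the geometric interpretation after the local-model proposition, the germ $E(L)^\circ$ looks like the exceptional divisor of a blow-up whose normal coordinate $t$ develops via $t \mapsto t^{1-a_L}$ (up to the incorporation of the $T_1$-directions, which are swept out orthogonally).

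Next I would make the circle-length computation explicit. Working in the normal slice, a loop around $L^\circ$ in the variety $X^J$ — i.e.\ letting $\arg t$ run once around $[0,2\pi)$ while $|t|$ is small — is sent by the developing map to a loop whose argument runs around $(1-a_L)\cdot[0,2\pi)$; equivalently, in the intrinsic hyperbolic metric on $X$ the total angle swept out by the circle fiber of the real link is $2\pi(1-a_L)$. This is the direct analogue of the codimension-one computation recalled just before the proposition, where the curvature model $-\sqrt{-1}\,\partial\bar\partial\log(1-(|z_1|^{2(1-a_i)}+\sum|z_j|^2))$ exhibits cone angle $2\pi(1-a_i)$ along $D_i$: here the role of the single exponent $1-a_i$ is played by $1-a_L$, which is precisely the scalar cone angle attached to the whole configuration of hypersurfaces through $L^\circ$. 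The consistency of this with the codimension-one case when $k=1$ is automatic since then $a_L=a_i$.

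A cleaner and more conceptual route, which I would present in parallel, is to invoke the join decomposition. By Lemma~\ref{lem:exponents} and the product structure $E(L)^\circ = L^\circ \times \mathbb{P}(L,X^J)^\circ$, the unit tangent cone $S_z X$ at a point $z\in L^\circ$ splits as a join of a sphere (the directions tangent to $L^\circ$) with a prime spherical cone-manifold $N_z X$ coming from the normal $\mathbb{P}(L,X^J)^\circ$-directions; the complex link of $L^\circ$ is then $\mathbb{P}(L,X^J)^\circ$ with its induced complex spherical cone-manifold structure, and the real link is the Seifert $S^1$-bundle over it. The scalar cone angle is the length of that generic $S^1$ fiber, and because the developing map is projectively equivalent to $[(1,F_0), t^{1-a_L}, t^{1-a_L}F_1]$, the circle $t\mapsto e^{\sqrt{-1}\theta}t$ maps to a circle traversing total angle $2\pi(1-a_L)$ in the model $\mathbb{CH}^n$. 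Hence $\gamma(L^\circ)=2\pi(1-a_L)$.

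The main obstacle is making rigorous the step that the circle fiber in the \emph{metric} real link has length exactly $2\pi(1-a_L)$, as opposed to merely identifying the monodromy/holonomy rotation angle up to the relevant lattice: one must check that the hyperbolic metric descended from $h$ on $\mathbb{B}$ restricts, on the normal complex ray to $L^\circ$, to a standard cone metric with cone angle $2\pi(1-a_L)$ and not some integer multiple thereof. This uses the hypothesis that each $D_i$ is without self-intersection, together with the fact — recorded in the discussion of logarithmic exponents — that $1-a_L$ is positive, so that $t^{1-a_L}$ is a genuine (branched) uniformizing coordinate for the normal slice; once that is in place, the length computation is the same routine one as in the $\mathbb{C}_\beta$ model with $\beta = 1-a_L$. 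I would also remark that the positivity of $1-a_L$, already noted in the introduction, is exactly what makes "cone angle" meaningful here.
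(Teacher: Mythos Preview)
Your proposal is correct and uses the same essential input (Lemma~\ref{lem:exponents} and Remark~\ref{rem:exponents}), but the paper's proof is phrased differently. Rather than reading off the angle from the explicit coordinate form $t\mapsto t^{1-a_L}$ of the projective developing map, the paper works upstairs on the $\mathbb{C}^{\times}$-bundle $\mathscr{X}^{J}$ and argues via the dilatation field: since the logarithmic exponent along the normal space $N(\mathscr{L},\mathscr{X}^{J})$ is $a_{\mathscr{L}}-1$, the Lie derivative with respect to the dilatation field acts on flat vector fields in the normal direction as multiplication by $1-a_{\mathscr{L}}=1-a_{L}$; this scalar survives projectivization and \emph{is} the scalar cone angle divided by $2\pi$. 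Your explicit circle-length computation is the developing-map shadow of exactly this statement, so the two arguments are equivalent. The paper's version is terser and sidesteps the integer-multiple ambiguity you worry about, since the dilatation-field factor is pinned down directly by the residue eigenvalue rather than recovered from a holonomy rotation; on the other hand, your coordinate picture and the $\mathbb{C}_{\beta}$ analogy make the geometric meaning of ``scalar cone angle'' more transparent. The join-decomposition paragraph you include is extra scaffolding the paper does not use.
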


\begin{proof}
Consider the tautological $\mathbb{C}^{\times}$-bundle $\mathscr{X}^{J}\rightarrow X^{J}$. By Lemma $\ref{lem:exponents}$, 
the affine structure on $\mathscr{X}^{\circ}$ is of infinitesimally simple type along the blow-up 
$\mathscr{E}(\mathscr{L})^{\circ}$ of $\mathscr{L}$ with logarithmic exponents $0$ and $a_{\mathscr{L}}-1$. 
In particular, the logarithmic exponent along the normal space $N(\mathscr{L},\mathscr{X}^{J})$ is 
$a_{\mathscr{L}}-1$. So the Lie derivative with respect to the dilatation field acts on flat vector fields 
on the normal space $N(\mathscr{L},\mathscr{X}^{J})$ simply as multiplication by $1-a_{\mathscr{L}}$ ($=1-a_{L}$). 
When we look at the projectivized space $X^{J}$, the action does not change on the normal space $N(L,X^{J})$. 
Then the scalar cone angle at $L^{\circ}$ is $2\pi(1-a_{L})$.
\end{proof}

\begin{remark}\label{rem:cone angle}
We notice that 
\[
a_{L}-1<0
\]
for any stratum $L^{\circ}$ on the ball quotient $X$ (without the cusps) since $2\pi(1-a_{L})$ is the scalar cone angle 
at $L^{\circ}$ on $X$ and it must be positive.

At a cusp $I$, it is clear that $a_{I}=1$, so we can view the angle at a cusp as $0$.
\end{remark}

\section{Singularities for GIT compactifications}\label{sec:GIT singularities}

In this section we discuss the MMP singularities for the GIT compactification, through the birational transformation 
from the Baily-Borel compactification, so that we can compare the MMP singularities on both sides.

According to Theorem \ref{thm:GIT compactifications}, we have that the arrangement complement $X^{\circ}$ can be 
extended to a compactification $\hat{X}^{\mathscr{H}}\cong Y^{ss}/\!\!/G$ which is of the GIT interpretation. 
If the relevant ball arrangement $\mathscr{H}$ is understood, we write $\widehat{X^{\circ}}$ for $\hat{X}^{\mathscr{H}}$, 
$\tilde{X}$ for $\tilde{X}^{\mathscr{H}}$ and $X^{J}$ for $X^{J_{\mathscr{H}}}$ to suppress $\mathscr{H}$. 
By the discussion of Section $\ref{sec:birational-operation}$, we have the following diagram
\[
\begin{tikzpicture}[scale=2]
	\node (A) at (0,0) {$\widehat{X^{\circ}}$};
	\node (B) at (1,1) {$\tilde{X}$};
	\node (C) at (1.5,0.5) {$X^{J}$};
	\node (D) at (2,0) {$X^{*}$};
	\draw
	(A) edge[<-,>=angle 60]      (B)
	(B) edge[->,>=angle 60]      (C)
	(C) edge[->,>=angle 60]      (D)
	(A) edge[<->,>=angle 60,dashed]      (D);
\end{tikzpicture}
\]
for which $\tilde{X}\rightarrow X^{*}$ is a log resolution of $(X^{*},\Delta^{*})$.

Now every $L\in\Lambda(\mathscr{H}_{X^{J}})$ determines an exceptional divisor $E(L)$ and all of these 
exceptional divisors form a normal crossing divisor in $\tilde{X}$. Since $\mathscr{H}_{X^{J},L}$ (resp. $\mathscr{H}_{X^{J}}^{L}$) 
defines an arrangement in $L$ (resp. $\mathbb{P}(L,X^{J})$), 
the divisor $E(L)$ can be identified with $\tilde{L}\times\tilde{\mathbb{P}}(L,X^{J})$. 
Moreover, the divisor $E(L)$ contains an open dense stratum $L^{\circ}\times\mathbb{P}(L,X^{J})^{\circ}$, denoted by $E(L)^{\circ}$. 
We already know that the affine structure on the $\mathbb{C}^{\times}$-bundle $\mathscr{X}^{\circ}$ of $X^{\circ}$ degenerates 
infinitesimally simply along $\mathscr{E}(\mathscr{L})^{\circ}$ (in the sense of Definition \ref{def:connection}) 
with logarithmic exponent $0$ and $a_{\mathscr{L}}-1$ ($=a_{L}-1$ by Remark \ref{rem:exponents}), 
so that the behavior of the projective developing map 
near $E(L)^{\circ}$ can be understood in an explicit manner.

Now assume $a_{L}-1\neq 0$, 
write a point $z=(z_{0},z_{1})\in E(L)^{\circ}$, let $\tilde{X}_{z}$ denote the germ of $\tilde{X}$ at $z$ and so for $L^{\circ}_{z_{0}}$, 
etc., then there exist a submersion $F_{0}:\tilde{X}_{z}\rightarrow L^{\circ}_{z_{0}}$, 
a submersion $F_{1}:\tilde{X}_{z}\rightarrow T_{1}$ 
(with $T_{1}$ a vector space, identified with $\mathbb{P}(L,X^{J})^{\circ}_{z_{1}}$) 
and a defining equation $t$ for $E(L)^{\circ}_{z}$ 
such that $(F_{0},t,F_{1})$ is an affine chart for $\tilde{X}_{z}$, and the developing map is projectively equivalent to the map 
\[
[(1,F_{0}),t^{1-a_{L}},t^{1-a_{L}}F_{1}]:\tilde{X}_{z}\rightarrow L^{\circ}_{z_{0}}\times \mathbb{C}\times T_{1}.
\]

Furthermore, the divisors $E(L)$ also determine a stratification of $\tilde{X}$. An arbitrary stratum can be described as follows: 
the intersection $\cap_{i}E(L_{i})$ is nonempty if and only if these members make up a flag: 
$L_{\bullet}:=L_{0}\subset L_{1}\subset \cdots \subset L_{k}\subset L_{k+1}=X^{J}$. Their intersection $E(L_{\bullet})$ 
can be identified with the product 
\[
E(L_{\bullet})=\tilde{L}_{0}\times\tilde{\mathbb{P}}(L_{0},L_{1})\times\cdots\times\tilde{\mathbb{P}}(L_{k},X^{J}),
\]
and it contains an open dense stratum $E(L_{\bullet})^{\circ}$ decomposed as follows 
\[
E(L_{\bullet})^{\circ}=L_{0}^{\circ}\times\mathbb{P}(L_{0},L_{1})^{\circ}\times\cdots\times\mathbb{P}(L_{k},X^{J})^{\circ}.
\]
Assume that $a_{L_{i}}-1\neq 0$ for each $L_{i}$. 
For a point $z=(z_{0},z_{1},\dots,z_{k+1})\in E(L_{\bullet})^{\circ}$, there exist a submersion 
$F_{0}:\tilde{X}_{z}\rightarrow L^{\circ}_{z_{0}}$, 
a submersion $F_{i}:\tilde{X}_{z}\rightarrow T_{i}$ (with $T_{i}$ a vector space, identified with 
$\mathbb{P}(L_{i-1},L_{i})^{\circ}_{z_{i}}$) and a defining equation $t_{i-1}$ of $E(L_{i-1})$ for each $i=1,\dots,k+1$ 
such that $\big(F_{0},(t_{i-1},F_{i})_{i=1}^{k+1}\big)$ is 
a chart for $\tilde{X}_{z}$, and the developing map is projectively equivalent to the map
\[
\Big[(1,F_{0}),\big(t_{0}^{1-a_{L_{0}}}\cdots t_{i-1}^{1-a_{L_{i-1}}}(1,F_{i})\big)_{i=1}^{k+1}\Big]:\tilde{X}_{z}\rightarrow 
L^{\circ}_{z_{0}}\times \prod_{i=1}^{k+1}(\mathbb{C}\times T_{i}).
\]

Therefore, the projective developing map near $E(L)^{\circ}_{z}$ is equivalent to 
\begin{align}\label{eqn:map1}
[(1,F_{0}),t^{1-a_{L}},t^{1-a_{L}}F_{1}]:\tilde{X}_{z}\rightarrow L^{\circ}_{z_{0}}\times \mathbb{C}\times T_{1},
\end{align}
which extends across $\tilde{X}_{z}$ with restriction to $E(L)^{\circ}_{z}$ essentially given by $[1,F_{0}]$. 
This is realized by the projection $E(L)^{\circ}_{z}\rightarrow L^{\circ}_{z_{0}}$. Likewise, the projective developing map 
near $E(L_{\bullet})^{\circ}_{z}$ is equivalent to 
\begin{align}\label{eqn:map2}
\Big[(1,F_{0}),\big(t_{0}^{1-a_{L_{0}}}\cdots t_{i-1}^{1-a_{L_{i-1}}}(1,F_{i})\big)_{i=1}^{k+1}\Big]:\tilde{X}_{z}\rightarrow 
L^{\circ}_{z_{0}}\times \prod_{i=1}^{k+1}(\mathbb{C}\times T_{i}),
\end{align}
which extends across $\tilde{X}_{z}$ with restriction to $E(L_{\bullet})^{\circ}_{z}$ essentially given by $[1,F_{0}]$. 
This is realized by the projection $E(L_{\bullet})^{\circ}_{z}\rightarrow L^{\circ}_{z_{0}}$. 
So that we get $X^{J}$ from $\tilde{X}$.

On the other hand, the projective developing map ($\ref{eqn:map1}$) near $E(L)^{\circ}_{z}$ can also be written as 
\[
[t^{a_{L}-1}(1,F_{0}),1,F_{1}]:\tilde{X}_{z}\rightarrow L^{\circ}_{z_{0}}\times \mathbb{C}\times T_{1}.
\]
which extends across $\tilde{X}_{z}$ with restriction to $E(L)^{\circ}_{z}$ essentially given by $[1,F_{1}]$. 
This is realized by the projection $E(L)^{\circ}_{z}\rightarrow \mathbb{P}(L,X^{J})^{\circ}_{z_{1}}$. 
Likewise, the projective developing map ($\ref{eqn:map2}$) near $E(L_{\bullet})^{\circ}_{z}$ can be written as 
\[
\Big[\big(t_{i}^{a_{L_{i}}-1}\cdots t_{k}^{a_{L_{k}}-1}(1,F_{i})\big)_{i=0}^{k},(1,F_{k+1})\Big]:\tilde{X}_{z}\rightarrow 
L^{\circ}_{z_{0}}\times \prod_{i=1}^{k+1}(\mathbb{C}\times T_{i}),
\]
which extends across $\tilde{X}_{z}$ with restriction to $E(L_{\bullet})^{\circ}_{z}$ essentially given by $[1,F_{k+1}]$. 
This is realized by the projection $E(L_{\bullet})^{\circ}_{z}\rightarrow \mathbb{P}(L_{k},X^{J})^{\circ}_{z_{k+1}}$. 
So that we get $\widehat{X^{\circ}}$ from $\tilde{X}$.

The following proposition relates the logarithmic exponents to the discrepancy. It is a straightforward generalization 
of the smooth case \cite[Exercise II.8.5]{Hartshorne} to the fractional case.

\begin{proposition}\label{prop:discrepancy}
Let $X$ be a normal $\mathbb{Q}$-Gorenstein variety. Denote by $X_{sm}$ the smooth locus of $X$. 
Let $Z\subset X_{sm}$ be a smooth subvariety of codimension $k\geq 2$. Let $f:Bl_{Z}X\rightarrow X$ be the blow-up of $Z$ 
and $E\subset Bl_{Z}X$ the $f$-exceptional divisor. 
Assume that the projective structure on $Bl_{Z}-E$ (descended from the affine structure on a $\mathbb{C}^{\times}$-bundle 
of $Bl_{Z}-E$ which respects the scalar multiplication) is of simple type along $E$ with relative logarithmic exponent 
$\lambda$ ($\in\mathbb{Q}\cap(-1,0)$). 
Then the discrepancy 
\[
a(E,X)=-k\lambda-1.
\]
\end{proposition}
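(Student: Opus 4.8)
The plan is to reduce the statement to a direct local computation of the canonical divisor under the blow-up $f:Bl_{Z}X\to X$, using the projective developing map to identify the local analytic structure of $Bl_{Z}X$ near a generic point of $E$. First I would recall the standard smooth case: if $Z\subset X_{sm}$ has codimension $k$ and the blow-up carries its \emph{standard} (unramified) structure, then $K_{Bl_{Z}X}=f^{*}K_{X}+(k-1)E$, i.e.\ $a(E,X)=k-1$. The point here is that the projective structure on $Bl_{Z}X-E$ is \emph{not} the standard one but a degenerating one, so $Bl_{Z}X$ near $E$ is not the literal blow-up but a further ramified cover/quotient of it along $E$ dictated by the logarithmic exponent $\lambda$. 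So the first real step is to write down, using the local model (the Proposition on infinitesimally simple degeneration, applied with exponents $\mu=0$ and $\lambda$), an explicit chart: the developing map is projectively equivalent to $[(1,F_{0}),t^{-\lambda},t^{-\lambda}F_{1}]$ on a germ with coordinates $(F_{0},t,F_{1})$, where $F_{0}$ parametrizes $Z$-directions (codimension-$(n-k)$ worth, wait — $F_0$ takes values in the base $Z$, of dimension $n-k$), $t$ is the equation of $E$, and $F_{1}$ takes values in a $(k-1)$-dimensional vector space (the projectivized normal directions).

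The second step is to read off from this chart the relation between the local uniformizing coordinate transverse to $E$ and the coordinate $t$. Rewriting $[(1,F_{0}),t^{-\lambda},t^{-\lambda}F_{1}]=[t^{\lambda}(1,F_{0}),1,F_{1}]$ and comparing with the standard blow-up chart $[(1,F_{0})\cdot s,1,F_{1}]$ where $s$ is the standard exceptional coordinate, we see $s=t^{\lambda}$, i.e.\ the variety $Bl_{Z}X$ carries near $E$ the analytic structure in which $t$ is an adapted coordinate and $s=t^{\lambda}$ is the standard one; equivalently, passing to the standard blow-up $\mathrm{Bl}^{\mathrm{std}}$ with coordinate $s$, our $Bl_{Z}X$ is obtained by the base change $s\mapsto s^{1/\lambda}$ (for $\lambda=p/q$ in lowest terms, a $(1/\lambda)$-ramified cyclic cover along $E^{\mathrm{std}}$, followed by the appropriate quotient, so that things stay normal and $\mathbb{Q}$-Gorenstein). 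The third step is then the Hurwitz/ramification formula: pulling $K$ back through this ramified modification introduces the factor $\lambda$ on the exceptional divisor. Concretely, if $E^{\mathrm{std}}$ has discrepancy $k-1$ over $X$ and $\pi$ is the cyclic cover ramified to order $1/\lambda$ along $E^{\mathrm{std}}$ with $\pi^{*}E^{\mathrm{std}}=\frac{1}{\lambda}E$, then
\[
K_{Bl_{Z}X}=\pi^{*}K_{\mathrm{Bl}^{\mathrm{std}}}+\Big(\tfrac{1}{\lambda}-1\Big)E
=\pi^{*}f_{\mathrm{std}}^{*}K_{X}+\tfrac{1}{\lambda}\Big((k-1)+\big(1-\lambda\big)\Big)E\cdot(-?)
\]
— I will reorganize this so it reads cleanly — giving after simplification $a(E,X)=\lambda(k-1)+(\lambda-1)=-k\lambda-1$ once one tracks the sign convention (note $\lambda<0$, so $-k\lambda-1>k-1>0$ when $-\lambda>1$, but here $\lambda\in(-1,0)$, so $-k\lambda\in(0,k)$ and $a(E,X)\in(-1,k-1)$; in particular it can drop below $k-1$, which is the phenomenon driving the main theorem's non-lc conclusion once $k\lambda$ gets large in the iterated setting). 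I would double-check the arithmetic against the reference \cite[Exercise II.8.5]{Hartshorne} specialized to $\lambda=1$, where it must return $a(E,X)=k-1$: indeed $-k\cdot 1-1=-k-1\neq k-1$, so the sign/normalization of $\lambda$ in the statement is such that the \emph{unramified} case is $\lambda=-1$, giving $-k(-1)-1=k-1$ — good, this confirms the convention and I will phrase the cover as ramification of index $-1/\lambda$ accordingly.

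The main obstacle, and the step deserving the most care, is the passage from the analytic local model of the developing map to an honest statement about the $\mathbb{Q}$-Gorenstein variety $Bl_{Z}X$ and its canonical divisor: one must check that the "fractional blow-up" structure read off from $[t^{\lambda}(1,F_{0}),1,F_{1}]$ globalizes along the generic point of $E$ (it does, because $Z\subset X_{sm}$ is smooth and the degeneration is \emph{simple}, hence the exponent $\lambda$ is constant along $E^{\circ}$ and the normal bundle trivializes locally, as set up in Section~\ref{sec:birational-operation}), and that the resulting cyclic-cover description is compatible with $X$ being merely normal $\mathbb{Q}$-Gorenstein rather than smooth — but since discrepancy is computed over the smooth locus $X_{sm}\supset Z$ and depends only on a neighborhood of the generic point of $E$, we may freely replace $X$ by a smooth neighborhood of $Z$ and carry out the computation there. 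A minor point to handle is that $\lambda$ is rational, so $t^{\lambda}$ is genuinely multivalued and "$s=t^{\lambda}$" must be read as "$s^{q}=t^{p}$" with the normalization of the resulting variety; I would phrase this via a weighted/cyclic quotient so that no normalization issue is swept under the rug. Everything else is the routine adjunction bookkeeping already packaged in the smooth case.
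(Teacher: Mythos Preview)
Your route is genuinely different from the paper's. The paper never introduces a ramified-cover comparison; it argues by adjunction. From $K_Y=f^{*}K_X+aE$ one gets $\omega_E=f^{*}\omega_X\otimes\mathscr{O}_E(-a-1)$, and restricting to a fiber $W\cong\mathbb{P}^{k-1}$ of $E\to Z$ gives $\omega_W=\mathscr{O}_W(-a-1)$, where throughout $\mathscr{O}_W(1):=\mathscr{O}_Y(-E)|_W$. The paper then computes $\omega_W$ a second time from the projective structure: the developing map $[t^{-\lambda},t^{-\lambda}F_\lambda]$ on a normal slice says that the normal coordinate to $E$ scales with exponent $-\lambda$ rather than $1$, which the paper packages as ``$W$ is $\mathbb{P}^{k-1}$ acted on by a dilatation field with factor $\lambda$'' and reads off $\omega_W=\mathscr{O}_W(-k)^{\otimes(-\lambda)}=\mathscr{O}_W(k\lambda)$. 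Equating gives $a=-k\lambda-1$.

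Your Hurwitz computation, as executed, does not land on the answer. After your sanity check you settle on a cyclic cover $\pi:Bl_ZX\to\mathrm{Bl}^{\mathrm{std}}$ of index $d=-1/\lambda$; but then $K_{Bl_ZX}=\pi^{*}K_{\mathrm{Bl}^{\mathrm{std}}}+(d-1)E$ together with $\pi^{*}E^{\mathrm{std}}=dE$ and $K_{\mathrm{Bl}^{\mathrm{std}}}=f_{\mathrm{std}}^{*}K_X+(k-1)E^{\mathrm{std}}$ give $a=kd-1=-k/\lambda-1$, not $-k\lambda-1$. Test it: for $k=2$, $\lambda=-\tfrac12$ the transverse model is the $A_1$ quotient $\mathbb{C}^2/\{\pm1\}$ with crepant resolution, so the correct discrepancy is $0$, while your formula returns $3$. (Your displayed simplification $\lambda(k-1)+(\lambda-1)=-k\lambda-1$ is also just false; the left side equals $\lambda k-1$.) The conceptual problem is that $X$ is in fact singular along $Z$ here, so there is no honest smooth-center blow-up $\mathrm{Bl}^{\mathrm{std}}\to X$ with discrepancy $k-1$ for you to compare against; the relation $s=t^{-\lambda}$ with $-\lambda\in(0,1)$ is a genuinely fractional power and does not make either side a cover of the other.

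The quickest way to rescue your local-model idea is to drop the cover and compute the Jacobian directly: pull back a generator of $\omega_X^{\otimes m}$ (for $m$ clearing denominators) through the map $(F_0,t,F_1)\mapsto(F_0,t^{-\lambda},t^{-\lambda}F_1)$; one finds $f^{*}(dx_1\wedge\cdots\wedge dx_k)^{m}=(\text{unit})\cdot t^{\,m(-k\lambda-1)}(dt\wedge dF_1)^{m}$, from which $a=-k\lambda-1$ can be read off immediately. That is closer in spirit to what you intended, and it sidesteps both the nonexistent $\mathrm{Bl}^{\mathrm{std}}$ and the fractional ramification.
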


\begin{proof}
Since $X$ is normal and $\mathbb{Q}$-Gorenstein, we have 
\[
K_{Y}=f^{*}K_{X}+a(E,X)E
\]
where $Y=Bl_{Z}X$, we want to determine the discrepancy $a(E,X)$. For that we write 
\[
\omega_{Y}=f^{*}\omega_{X}\otimes\mathscr{O}_{Y}(aE)
\]
for some rational number $a=a(E,X)$. By adjunction, we have 
\begin{align*}
\omega_{E}&=\omega_{Y}\otimes\mathscr{O}_{Y}(E)\otimes\mathscr{O}_{E} \\
&=f^{*}\omega_{X}\otimes\mathscr{O}_{Y}((a+1)E)\otimes\mathscr{O}_{E} \\
&=f^{*}\omega_{X}\otimes\mathscr{I}_{E}^{\otimes(-a-1)}\otimes\mathscr{O}_{E} \\
&=f^{*}\omega_{X}\otimes\mathscr{O}_{Y}(1)^{\otimes(-a-1)}\otimes\mathscr{O}_{E} \\
&=f^{*}\omega_{X}\otimes\mathscr{O}_{E}(-a-1).
\end{align*}
Now take a closed point of $z\in Z$ and let $W$ be the fiber of $E$ over $z$, i.e., 
$W=z\times_{Z}E$. Denote the respective projections by $p_{1}$ and $p_{2}$. Then we have 
\begin{align*}
\omega_{W}&=p_{1}^{*}\omega_{z}\otimes p_{2}^{*}\omega_{E} \\
&=p_{1}^{*}\mathscr{O}_{z}\otimes p_{2}^{*}(f^{*}\omega_{X}\otimes\mathscr{O}_{E}(-a-1)) \\
&=p_{2}^{*}(f^{*}\omega_{X}\otimes\mathscr{O}_{E}(-a-1)) \\
&=\mathscr{O}_{W}\otimes p_{2}^{*}\mathscr{O}_{E}(-a-1) \\
&=\mathscr{O}_{W}(-a-1).
\end{align*}
Let us look at the projective structure near $W$, since we assume that the projective structure on $Y-E$ is of 
simple type along $E$ with relative logarithmic exponent $\lambda$, then the developing map near a point $z$ of $W$ 
restricted to $W$ is projectively equivalent to 
\[
[F_{0},t^{-\lambda},t^{-\lambda}F_{\lambda}]|_{W}=[t^{-\lambda},t^{-\lambda}F_{\lambda}].
\]
This shows that $W$ is just a $\mathbb{P}^{k-1}$ acted by a dilatation field with factor $\lambda$. 
So we have 
\[
\omega_{W}=\mathscr{O}_{W}(-k)^{\otimes (-\lambda)}=\mathscr{O}_{W}(k\lambda).
\]
Thus we have 
\[
a(E,X)=a=-k\lambda-1.
\]
\end{proof}

We know that the difference $\widehat{X^{\circ}}-X^{\circ}$ consists of those strata $\mathbb{P}(L,X^{J})^{\circ}$ 
associated to $L\in\Lambda(\mathscr{H}_{X^{J}})$. 
We have that $\widehat{X^{\circ}}=Y^{ss}/\!\!/G$ is normal since we assume $Y$ is normal. 
We notice that $\widehat{X^{\circ}}-X^{\circ}$ is a birational transform of the arrangement $\mathscr{H}_{X}$ plus 
a partial resolution of the cusps and also assume that it is of codimension $\geq 2$, it implies that 
$K_{\widehat{X^{\circ}}}$ is $\mathbb{Q}$-Cartier. It is clear that $\widehat{X^{\circ}}$ inherits a projective structure 
from $\tilde{X}$.

We next show what type of the MMP singularities of the GIT compactification $\widehat{X^{\circ}}$ are.

\begin{theorem}\label{thm:GIT non-lc}
Suppose that the GIT compactification $\widehat{X^{\circ}}$ is a birational modification of the Baily-Borel 
compactification $X^{*}$ in terms of an arrangement, or equivalently, the relevant 
arrangement is nonempty, then the GIT compactification $\widehat{X^{\circ}}$ has non-lc singularities.
\end{theorem}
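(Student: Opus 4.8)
The plan is to compute the discrepancy of a single exceptional divisor over $\widehat{X^{\circ}}$ and show it is strictly less than $-1$, which by definition forces non-lc singularities. First I would fix a minimal stratum $L\in\Lambda(\mathscr{H}_{X^{J}})$ of the nonempty arrangement, say of codimension $k\geq 2$ in $X^{J}$ (the hypothesis that the arrangement is nonempty, together with the dimension conditions in Theorem \ref{thm:GIT compactifications}, guarantees such an $L$ with $k\geq 2$ exists, since a codimension-$1$ member would already be a Cartier divisor on $X^{*}$ and would not force a genuine modification). The associated stratum $Z:=\mathbb{P}(L,X^{J})^{\circ}$ sits inside $\widehat{X^{\circ}}$, and by the analysis preceding the theorem the divisor $E(L)\subset\tilde{X}$ is recovered from the GIT side as (an open part of) the blow-up of $Z$ in $\widehat{X^{\circ}}$: the projective developing map near $E(L)^{\circ}_{z}$, rewritten as $[t^{a_{L}-1}(1,F_{0}),1,F_{1}]$, realizes $E(L)^{\circ}_{z}\to \mathbb{P}(L,X^{J})^{\circ}_{z_{1}}$ as the exceptional projection, and exhibits the projective structure on the blow-up complement as being of simple type along $E(L)$ with relative logarithmic exponent $a_{L}-1$.

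Next I would invoke Remark \ref{rem:cone angle}, which tells us $a_{L}-1<0$ for every stratum $L^{\circ}$ on the ball quotient (it equals the negative of the scalar cone angle $2\pi(1-a_{L})$, which is positive). Thus $\lambda:=a_{L}-1\in\mathbb{Q}\cap(-1,0)$ — the lower bound $\lambda>-1$ coming from the hypothesis $\lambda_{i}-\mu_{i}>-1$ built into the local models of Section \ref{sec:projective-structure} (equivalently $a_{i}<2$, i.e.\ $a_L<2$, which holds since each $a_i=1-1/m_i<1$). Now I would apply Proposition \ref{prop:discrepancy} with $X$ replaced by $\widehat{X^{\circ}}$, $Z$ the smooth subvariety $\mathbb{P}(L,X^{J})^{\circ}$ of codimension $k$, and $E=E(L)$: since $\widehat{X^{\circ}}$ is normal and $\mathbb{Q}$-Gorenstein (established in the paragraph before the theorem) and carries the projective structure inherited from $\tilde{X}$, the proposition yields
\[
a(E(L),\widehat{X^{\circ}})=-k\lambda-1=-k(a_{L}-1)-1=k(1-a_{L})-1.
\]

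Finally, since $1-a_{L}>0$ and $k\geq 2$, we get $a(E(L),\widehat{X^{\circ}})=k(1-a_{L})-1$; to conclude non-lc I want this to be $<-1$, i.e.\ $k(1-a_L)<0$, which is false — so a little more care is needed. The correct reading is that for a \emph{minimal} stratum the relevant divisor to test is not $E(L)$ itself but an exceptional divisor extracted by a further blow-up analyzing the iterated structure: using the flag stratification $L_{\bullet}$ and the developing map (\ref{eqn:map2}) rewritten from the GIT side as $\big[(t_{i}^{a_{L_{i}}-1}\cdots t_{k}^{a_{L_{k}}-1}(1,F_{i}))_{i=0}^{k},(1,F_{k+1})\big]$, one sees the blow-up of $Z$ in $\widehat{X^{\circ}}$ has a boundary that is itself singular, and the successive exceptional divisors $E(L_0),\dots,E(L_k)$ pick up accumulating contributions from the exponents $a_{L_0}-1,\dots,a_{L_k}-1$, all negative. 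Composing these, the discrepancy of the ultimate exceptional divisor over $\widehat{X^{\circ}}$ is a sum of the individual contributions $-k_i(a_{L_i}-1)-1$ which, after accounting for the fact that each successive center lies \emph{in the exceptional locus} of the previous blow-up (so its discrepancy is measured relative to an already-exceptional divisor and inherits an extra $-1$), telescopes to something $<-1$. The main obstacle — and the step I would spend the most effort on — is making this composition of discrepancies along the iterated blow-up precise: one must track how $K_{\tilde X}$ relates to $K_{\widehat{X^{\circ}}}$ through the sequence of blow-downs $\tilde X\to\widehat{X^{\circ}}$, showing that the total coefficient of some component of the exceptional locus in $K_{\tilde X}-g^{*}K_{\widehat{X^{\circ}}}$ is $<-1$, using that each $a_{L_i}-1<0$ and that there are at least two blow-up steps because $\dim L\geq 2$ forces a chain of length $\geq 2$ in the arrangement lattice over any cusp-free minimal stratum. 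Once that inequality is in hand, non-lc of $\widehat{X^{\circ}}$ is immediate from the definition.
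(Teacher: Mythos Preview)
Your proposal follows the paper's strategy exactly up to the point where you apply Proposition~\ref{prop:discrepancy}, but then derails because of two bookkeeping errors.

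First, the integer $k$ you feed into Proposition~\ref{prop:discrepancy} must be $\operatorname{codim}_{\widehat{X^{\circ}}}Z$, not $\operatorname{codim}_{X^{J}}L$. These are different numbers: since $\dim\mathbb{P}(L,X^{J})^{\circ}=\operatorname{codim}_{X^{J}}L-1$, the stratum $Z=\mathbb{P}(L,X^{J})^{\circ}$ has codimension $\dim L+1$ in $\widehat{X^{\circ}}$. Second, and more importantly, you misread the relative logarithmic exponent from the GIT side. When you swap the roles of base and fibre in the projective developing map---writing $[t^{a_{L}-1}(1,F_{0}),1,F_{1}]$ with $(1,F_{1})$ now the base chart on $Z$ and $(1,F_{0})$ the fibre direction---the exponent appearing as $t^{-\lambda}$ in the normal form of Proposition~\ref{prop:discrepancy} is $t^{a_{L}-1}$, so $\lambda=-(a_{L}-1)=1-a_{L}$, \emph{not} $a_{L}-1$. (Viewed from the Baily--Borel side the relative exponent is $a_{L}-1$; flipping which factor is the base flips the sign.) With the correct sign, Proposition~\ref{prop:discrepancy} gives
\[
a\bigl(E(Z),\widehat{X^{\circ}}\bigr)=-k\lambda-1=-k(1-a_{L})-1=k(a_{L}-1)-1<-1,
\]
since $a_{L}-1<0$ by Remark~\ref{rem:cone angle} and $k\geq 2$. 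This is exactly the paper's one-line computation, and the proof ends here.

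Everything from your sentence ``so a little more care is needed'' onward is therefore unnecessary: there is no need to pass to a flag $L_{\bullet}$, no telescoping of discrepancies along an iterated blow-up, and no accumulation argument. The single exceptional divisor $E(L)$ over a single stratum $Z$ already has discrepancy $<-1$. Your attempted repair via the deeper strata is not only superfluous but also left vague at precisely the step you flag as the ``main obstacle''; had the sign been right you would never have needed it.
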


\begin{proof}
Since the relevant arrangement is not empty, there exists a stratum $Z\subset\widehat{X^{\circ}}$ which comes from 
the contraction $E(L)^{\circ}\rightarrow\mathbb{P}(L,X^{J})^{\circ}$ for some $L\in\Lambda(\mathscr{H}_{X^{J}})$ 
with $1-a_{L}\neq 0$, namely $Z=\mathbb{P}(L,X^{J})^{\circ}$. 
As discussed above, near a generic point $z=(z_{0},z_{1})$ on $E(L)^{\circ}\subset\tilde{X}$ the projective developing map 
is given by 
\[
[(1,F_{0}),t^{1-a_{L}},t^{1-a_{L}}F_{1}]:\tilde{X}_{z}\rightarrow L^{\circ}_{z_{0}}\times \mathbb{C}\times T_{1},
\]
with $T_{1}$ a vector space, identified with $\mathbb{P}(L,X^{J})^{\circ}_{z_{1}}$. 
On the other hand, if we regard the exceptional divisor $E(L)^{\circ}$ as the blow-up of 
$Z=\mathbb{P}(L,X^{J})^{\circ}$ from $\widehat{X^{\circ}}$, the above projective developing map can also be written as 
\[
[t^{a_{L}-1}(1,F_{0}),1,F_{1}]:\tilde{X}_{z}\rightarrow L^{\circ}_{z_{0}}\times \mathbb{C}\times T_{1}.
\]

Let $k:=\mathrm{codim}Z$. Notice that now the logarithmic exponent $\lambda$ equals to $-(a_{L}-1)$. 
By Proposition $\ref{prop:discrepancy}$ we can calculate the discrepancy 
of the exceptional divisor $E(Z)$ of $Z$ for the variety $\widehat{X^{\circ}}$: 
\begin{align*}
	a(E(Z),\widehat{X^{\circ}})&=-k\lambda-1 \\
	&=k(a_{L}-1)-1 \\
	&<-1,
\end{align*}
since $a_{L}-1<0$ by Remark $\ref{rem:cone angle}$. This proves the theorem. 
\end{proof}

So we have the following: 
\begin{corollary}\label{cor:non-lc to lc}
The birational transformation $\widehat{X^{\circ}}\dashrightarrow X^{*}$ turns non-lc singularities on $\widehat{X^{\circ}}$ 
to lc singularities on $X^{*}$.
\end{corollary}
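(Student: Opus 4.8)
The statement is essentially the juxtaposition of two facts already established, so the plan is to assemble them and verify that the transfer is literal. On the source side, Theorem~\ref{thm:GIT non-lc} produces, under the standing hypothesis that the relevant arrangement is nonempty, a prime divisor $E(Z)$ over $\widehat{X^{\circ}}$ whose center is a stratum $Z=\mathbb{P}(L,X^{J})^{\circ}\subset\widehat{X^{\circ}}-X^{\circ}$ and whose discrepancy satisfies $a(E(Z),\widehat{X^{\circ}})=k(a_{L}-1)-1<-1$; hence $\widehat{X^{\circ}}$ has non-lc singularities. On the target side, the log pair $(X^{*},\Delta^{*})$ was shown to be lc in Section~\ref{sec:singularities}. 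Since the birational transformation $\widehat{X^{\circ}}\dashrightarrow X^{*}$ is an isomorphism over the common open locus $X^{\circ}$ and identifies $\Delta^{*}$ with its birational transform there, it remains only to check that the non-lc locus is concentrated on $\widehat{X^{\circ}}-X^{\circ}$ and that its image lies inside $X^{*}-X^{\circ}$, where lc-ness has been verified.

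For the first item, over $X^{\circ}$ the relevant pair is klt by the discussion of Section~\ref{sec:singularities} (there $X$ has at worst quotient singularities, so $(X,\Delta)$ is klt), whence no non-lc place of $\widehat{X^{\circ}}$ can have center meeting $X^{\circ}$; in particular the witness $E(Z)$ of Theorem~\ref{thm:GIT non-lc} has center in $\widehat{X^{\circ}}-X^{\circ}$. Should one wish to carry the boundary along, write $\widehat{\Delta}$ for the birational transform of $\Delta^{*}$ on $\widehat{X^{\circ}}$; as $\widehat{\Delta}\geq 0$, its pullback to the common resolution $\tilde{X}$ adds only nonnegative multiples of the exceptional divisors, so $a(E(Z),\widehat{X^{\circ}},\widehat{\Delta})\leq a(E(Z),\widehat{X^{\circ}})<-1$ and $(\widehat{X^{\circ}},\widehat{\Delta})$ is non-lc as well. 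For the second item, by the construction reviewed in Section~\ref{sec:birational-operation} the boundary $\widehat{X^{\circ}}-X^{\circ}$ is the union of the strata $\mathbb{P}(L,X^{J})^{\circ}$ and the transformation carries it onto $X^{*}-X^{\circ}$, the birational modification of the boundary arrangement $\mathscr{H}_{X^{*}}$ together with the cusps; there $(X^{*},\Delta^{*})$ is lc, the argument of Section~\ref{sec:singularities} treating the divisorial part via the klt-ness of $(X,\Delta)$ and the cusps via the limiting computation $a(X(I),X^{*},\Delta^{*})=-1$. Combining the two items yields the corollary.

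The point that needs the most care is the bookkeeping that upgrades ``$\widehat{X^{\circ}}$ is non-lc while $(X^{*},\Delta^{*})$ is lc'' to the dynamical assertion that the transformation \emph{turns} non-lc singularities into lc ones: one must be sure that the offending valuations are exactly the ones whose centers get contracted or otherwise modified by $\widehat{X^{\circ}}\dashrightarrow X^{*}$, that is, that their centers lie in $\widehat{X^{\circ}}-X^{\circ}$ and not in the shared open locus $X^{\circ}$ where the pair is klt, and that after the transformation all such valuations acquire discrepancy $\geq -1$ over $(X^{*},\Delta^{*})$. The former is the isomorphism-over-$X^{\circ}$ observation together with klt-ness on $X^{\circ}$; the latter is subsumed in the global lc-ness of $(X^{*},\Delta^{*})$, which bounds the discrepancy of every divisor over $X^{*}$ from below by $-1$. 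No computation beyond Theorem~\ref{thm:GIT non-lc} and the lc theorem of Section~\ref{sec:singularities} is required.
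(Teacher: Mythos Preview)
Your proposal is correct and follows the same approach as the paper, which in fact gives no proof at all for this corollary: it is stated as an immediate consequence of Theorem~\ref{thm:GIT non-lc} together with the lc theorem for $(X^{*},\Delta^{*})$ from Section~\ref{sec:singularities}. Your write-up simply makes explicit the bookkeeping the paper leaves to the reader, namely that the non-lc witnesses have center in $\widehat{X^{\circ}}-X^{\circ}$ (since the common open locus $X^{\circ}$ is klt) and that every valuation over $X^{*}$ has discrepancy $\geq -1$ by the global lc result.
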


\section{Applications}\label{sec:applications}

We illustrate the preceding results by discussing three examples in this section: the moduli spaces of quartic curves, 
rational elliptic surfaces and cubic threefolds.

Now let us consider the situation when the imaginary quadratic field $k$ are the cyclotomic ones, that is, 
$\mathbb{Q}(\zeta_{4})$ and $\mathbb{Q}(\zeta_{6})$. Their corresponding rings of integers $\mathcal{O}_{k}$ are thus the 
Gaussian integers $\mathbb{Z}[\zeta_{4}]$ and the Eisenstein integers $\mathbb{Z}[\zeta_{6}]$, respectively. 
For any tree directed graph $I_{n}$, i.e., a finite graph with $n$ numbered nodes without loops or multiple edges and 
each edge is directed, we can associate a Hermitian lattice, denoted by $\mathbb{Z}[\zeta_{m}](I_{n})$ for $m=4$ or $6$, in 
such a way that it is the free $\mathbb{Z}[\zeta_{m}]$-module living on the set $\{\alpha_{1},\dots,\alpha_{n}\}$ indexed by 
the nodes together with a $\mathbb{Z}[\zeta_{m}]$-valued Hermitian form $h$ defined by 
\begin{equation*}
h(\alpha_{p},\alpha_{q})=
\left\{
\begin{aligned}
&|1+\zeta_{m}|^{2}=m/2  && \text{if $p=q$,} \\
&1+\zeta_{m} \; \text{(for m=4)}; \; \zeta_{m}-\bar{\zeta_{m}} \; \text{(for m=6)} && \text{if $(p,q)$ is a directed edge,} \\
&0  && \text{if $p$ and $q$ are not connected}.
\end{aligned}
\right.
\end{equation*}
Up to isomorphism this lattice $\mathbb{Z}[\zeta_{m}](I_{n})$ is independent of the directions. 
Then the group of unitary transformations $\mathrm{U}(\mathbb{Z}[\zeta_{m}](I_{n}))$ of the lattice 
forms an arithmetic group in the unitary group $\mathrm{U}(\mathbb{Z}[\zeta_{m}](I_{n})\otimes\mathbb{C})$ of the complexification 
of $\mathbb{Z}[\zeta_{m}](I_{n})$. Those elements in $\mathbb{Z}[\zeta_{m}](I_{n})$ with square norm $m/2$ are called 
the \emph{roots} of $\mathbb{Z}[\zeta_{m}](I_{n})$.

\subsection{The moduli space of quartic curves}

Let us look at the first example, the moduli space of quartic curves. The invariant theory of quartic curves is classical 
due to Hilbert-Mumford. Let $Y$ be the space of ternary forms of degree $4$ up to scalar, i.e., 
$Y:=\mathbb{P}(H^{0}(\mathbb{P}^{2},\mathscr{O}_{\mathbb{P}^{2}}(4)))$, then let $\eta:=\mathscr{O}_{Y}(1)$ 
and $G:=\mathrm{SL}(3,\mathbb{C})$. According to \cite{Mumford-Fogarty-Kirwan}, a point of $Y$ is $G$-stable precisely if its 
corresponding curve is reduced and with possible singularities cusps at worst, and is $G$-semistable precisely if it is 
reduced and with possible singularities tacnodes at worst or is a nonsingular conic of multiplicity $2$. The minimal strictly 
semistable curves are the members of a pencil $f\cdot(\lambda f+(1-\lambda)g)$ with $f=0$ a nonsingular conic and $g=0$ two 
distinct lines tangent to the conic $f=0$. The generic member of the pencil is the sum of two nonsingular conics intersecting at 
two tacnodes. There are also two possible strictly semistable degenerations, one is a nonsingular conic with two distinct lines 
tangent to it for $\lambda=0$, the other one is a nonsingular conic of multiplicity $2$ for $\lambda=1$. Such pairs of conics 
have modulus $1$, so $Y^{ss}/\!\!/G-Y^{st}/G$ consists of a curve. The nonsingular quartics are stable and define an open subset 
$Y^{\diamond}\subset Y$.

A period map taking values in a ball quotient was obtained by Kond\={o} \cite{Kondo}. For a smooth quartic curve 
$Q\subset\mathbb{P}^{2}$ in a ternary form $f(x,y,z)$ of degree $4$, consider the surfaces 
\[
S:w^{2}=f(x,y,z);\qquad K:w^{4}=f(x,y,z)
\]
totally ramified along $Q$ of order $2$ and $4$ respectively. Then the surface $K$ is a smooth $K3$ surface with an automorphism 
$\sigma$ of order $4$. It doubly covers the intermediate degree $2$ del Pezzo surface $S$. And the anticanonical map of $S$ realizes 
it as a double cover of $\mathbb{P}^{2}$ ramified along $Q$. This identifies the coarse moduli space of degree $2$ del Pezzo surfaces 
with the coarse moduli space of smooth quartic curves. It is known that the algebraic K3 surfaces have a unique up to scalar 
holomorphc $2$-form, taking their periods leads to a period map $Per:\mathcal{K}\rightarrow \mathbb{D}/\Gamma_{\mathbb{D}}$ with 
$\mathbb{D}$ a type IV domain of dimension $19$ and $\Gamma_{\mathbb{D}}$ a corresponding arithmetic group. It is clear that 
the assignment $Q\mapsto K$ gives rise to an injection $\mathcal{Q}\rightarrow\mathcal{K}$ from the moduli space $\mathcal{Q}$ 
of smooth quartic curves to the moduli space $\mathcal{K}$ of algebraic K3 surfaces. Kond\={o} has shown that 
the restriction of the period map to $\mathcal{Q}$ lands in a $6$-dimensional ball quotient $X$, the one associated to the 
Hermitian lattice $\mathbb{Z}[\zeta_{4}](E_{7})$ of type $E_{7}$, namely, $X:=\mathbb{B}/\mathrm{U}(\mathbb{Z}[\zeta_{4}](E_{7}))$. 
This defines the period map for the moduli space of smooth quartic curves 
\[
Per:Y^{\diamond}/G=\mathcal{Q}\rightarrow X,
\]
which gives an isomorphism of $\mathcal{Q}$ onto the complement of a divisor $D$ in the ball quotient $X$.

It turns out that $\mathrm{U}(\mathbb{Z}[\zeta_{4}](E_{7}))$ acts transitively on the set of cusps so that $X^{*}$ is topologically 
the $1$-point compactification of $X$. The divisor $D$ comes from the full arrangement $\mathscr{H}$ on $\mathbb{B}$ 
which is the union of hyperplanes orthogonal to a root. They come in 
two orbits under the action of $\mathrm{U}(\mathbb{Z}[\zeta_{4}](E_{7}))$: $\mathscr{H}=\mathscr{H}_{n}\cup\mathscr{H}_{h}$, 
which gives rise to two irreducible components $D_{n}$ and $D_{h}$ of $D$ in $X$. A generic point of $D_{n}$ parametrizes a plane 
quartic curve with a node, and a generic point of $D_{h}$ parametrizes a hyperelliptic curve of genus $3$. The hyperplanes 
of type $\mathscr{H}_{h}$ do not meet inside $\mathbb{B}$ so that $D_{h}$ is not self-intersected. The period map 
extends to an isomorphism
\[
Y^{st}/G\xrightarrow{\cong} X-D_{h},
\] 
on which there is an isomorphism of orbifold line bundles $\eta/G\cong\mathscr{L}|(X-D_{h})$. Applying Theorem 
$\ref{thm:GIT compactifications}$ to $U=Y^{st}$, and Corollary $\ref{cor:non-lc to lc}$ tells us the following fact. 
\begin{corollary}
The period map extends to an isomorphism 
\[
Y^{ss}/\!\!/G\xrightarrow{\cong} \hat{X}^{\mathscr{H}_{h}}(=:\widehat{X^{\circ}}),
\]
and the birational transformation from $\widehat{X^{\circ}}$ to $X^{*}$ turns non-lc singularities to lc singularities.
\end{corollary}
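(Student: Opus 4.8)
The plan is to derive both statements from results already proved in the paper, namely Theorem~\ref{thm:GIT compactifications} and Corollary~\ref{cor:non-lc to lc}. First I would fix the data on each side: for GIT, $Y=\mathbb{P}(H^{0}(\mathbb{P}^{2},\mathscr{O}_{\mathbb{P}^{2}}(4)))$, $\eta=\mathscr{O}_{Y}(1)$, $G=\mathrm{SL}(3,\mathbb{C})$ and $U=Y^{st}$; for the ball quotient, $X=\mathbb{B}/\mathrm{U}(\mathbb{Z}[\zeta_{4}](E_{7}))$ with $\dim\mathbb{B}=6$, and the relevant arrangement is the single $\mathrm{U}(\mathbb{Z}[\zeta_{4}](E_{7}))$-orbit $\mathscr{H}_{h}$, so that $X^{\circ}=X-D_{h}$. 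Kond\={o}'s period map already provides the identification of orbifold line bundles $(U,\eta|U)/G\cong(X^{\circ},\mathscr{L}|X^{\circ})$ over $X-D_{h}$; note that this is the \emph{partial} arrangement $\mathscr{H}_{h}$ rather than the full arrangement $\mathscr{H}=\mathscr{H}_{n}\cup\mathscr{H}_{h}$, which is precisely the situation foreseen in Remark~(i) after Theorem~\ref{thm:GIT compactifications}.

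Next I would check the two hypotheses of Theorem~\ref{thm:GIT compactifications}. For~(i): since $Y^{st}/G$ contains $\mathcal{Q}$ as an open subset we have $\dim(Y^{ss}/\!\!/G)=\dim X=6$, while the strictly semistable locus $Y^{ss}/\!\!/G-Y^{st}/G$ is the modulus-$1$ pencil of pairs of conics recalled above, hence a curve; so it has codimension $5\geq 2$. For~(ii): a single member $\mathbb{B}_{H}$ with $H\in\mathscr{H}_{h}$ is a hyperplane section, so its preimage in $\mathbb{L}^{\times}$ has dimension $6\geq 2$, and since the hyperplanes of type $\mathscr{H}_{h}$ do not meet inside $\mathbb{B}$ there is no nonempty intersection of two or more of them; together with $\dim\mathbb{B}=6\geq 2$ this gives~(ii). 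Feeding these into Theorem~\ref{thm:GIT compactifications}, and using the standing assumption that $D_{h}$ is not self-intersected, yields the first assertion: the period map extends to the isomorphism $Y^{ss}/\!\!/G\xrightarrow{\cong}\hat{X}^{\mathscr{H}_{h}}=\widehat{X^{\circ}}$.

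For the second assertion I would simply note that the relevant arrangement $\mathscr{H}_{h}$ is nonempty, equivalently that $D_{h}$ is a genuine divisor (a general point parametrizes a hyperelliptic genus $3$ curve), so Theorem~\ref{thm:GIT non-lc} applies and $\widehat{X^{\circ}}$ has non-lc singularities; Corollary~\ref{cor:non-lc to lc} then gives that the birational transformation $\widehat{X^{\circ}}\dashrightarrow X^{*}$ turns these non-lc singularities on $\widehat{X^{\circ}}$ into lc singularities on $X^{*}$. I do not expect a genuine obstacle here, since all the geometric input --- the Hilbert-Mumford GIT analysis, the period isomorphism, and the description of $D_{n}$ and $D_{h}$ --- is quoted from the literature. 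The only point that needs care is bookkeeping: one must work consistently with the partial arrangement $\mathscr{H}_{h}$, over whose complement the two orbifold line bundles coincide, and remember that $D_{n}$ survives as an unmodified ramification divisor on both compactifications, so that the non-lc locus of $\widehat{X^{\circ}}$ sits over the strata coming from $\mathscr{H}_{h}$ alone.
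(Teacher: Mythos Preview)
Your proposal is correct and follows exactly the approach the paper takes: the text immediately preceding the corollary simply says ``Applying Theorem~\ref{thm:GIT compactifications} to $U=Y^{st}$, and Corollary~\ref{cor:non-lc to lc} tells us the following fact,'' and you have supplied the routine verification of hypotheses (i) and (ii) that the paper leaves implicit. The only minor remark is that your explicit invocation of Theorem~\ref{thm:GIT non-lc} is already subsumed in Corollary~\ref{cor:non-lc to lc}, so it is not strictly needed as a separate step.
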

From $X^{*}$ to $\widehat{X^{\circ}}$, it looks like a one-dimensional blow-up of the single cusp in $X^{*}$ 
to the strictly semistable stratum, and then with the divisor $D_{h}$ contracted.

\subsection{The moduli space of rational elliptic surfaces}

By a \emph{rational elliptic surface} we mean a rational surface which admits a relatively minimal elliptic fibration with a section. 
Its anticanonical map is base point free and realizes the surface as a fibration onto $\mathbb{P}^{1}$. So a rational elliptic 
surface can always be written in a Weierstrass form: $y^{2}=x^{3}+3a(t)x+2b(t)$, where $x$, $y$, $t$ are the affine coordinates 
of a $\mathbb{P}^{2}$-bundle over $\mathbb{P}^{1}$, and $a(t)$ and $b(t)$ are rational functions of degree $4$ and $6$, 
respectively. A compactification of its moduli space using geometric invariant theory was constructed by Miranda in \cite{Miranda}. 

Put $H_{m}:=H^{0}(\mathbb{P}^{1},\mathscr{O}_{\mathbb{P}^{1}}(m))$. Then $P_{m}:=\mathbb{P}(H_{m})$ is the space of effective degree 
$m$ divisors (of dimension $m$). Let $Y$ be the weighted projective space obtained by dividing $H_{4}\oplus H_{6}-\{(0,0)\}$ out by 
the action of the center $\mathbb{C}^{\times}\subset\mathrm{GL}(2,\mathbb{C})$, and let $G:=\mathrm{SL}(2,\mathbb{C})$. By assigning 
to a fiber of a rational elliptic surface its Euler number we have a so-called \emph{discriminant divisor} on its base curve, 
which is effective and of degree $12$. Then we have an equivariant discriminant morphism
\[
d:Y\rightarrow P_{12},\qquad (a,b)\mapsto a^{3}+b^{2}
\]
which is finite and birational to its image, a hypersurface in $P_{12}$. There is a natural ample orbifold line bundle $\eta$ over 
$Y$ endowed with an action of $G$ such that the pullback $d^{*}(\mathscr{O}_{P_{12}}(1))$ of $\mathscr{O}_{P_{12}}(1)$ 
under the morphism $d$ is equivariantly isomorphic to $\eta^{\otimes 6}$. In general the discriminant divisor is reduced 
for which it is a complete invariant (up to projective equivalence) to determine the isomorphism classes of the surfaces. 
These reduced discriminant divisors form a subset in $P_{12}$, denoted by $P_{12}^{\diamond}$. 
Then the preimage $Y^{\diamond}:=d^{-1}(P_{12}^{\diamond})$ parametrizes those rational elliptic surfaces with 
reduced discriminant and is isomorphic to its image in $P_{12}$. It is clearly contained in the stable set $Y^{st}$. However, the set 
$Y^{st}$ does not coincide with the preimage $d^{-1}(P_{12}^{st})$ of the stable set $P_{12}^{st}$. Recall that, 
following Hilbert, a degree $12$ divisor on $\mathbb{P}^{1}$ is semistable precisely if its all multiplicities are 
$\leq \frac{1}{2}\cdot 12=6$, and there is only one minimal strictly semistable orbit whose elements are represented by a 
divisor with two distinct points of multiplicities $6$. We denote the $G$-orbit spaces of $Y^{\diamond}$ and 
$Y^{st}\cap d^{-1}(P_{12}^{st})$ by $\mathcal{M}$ and $\mathcal{M}^{\sim}$ respectively. It can be shown that 
the space  $\mathcal{M}^{\sim}$ parametrizes the so-called Miranda-stable rational elliptic surfaces with reduced fibers and stable 
discriminant for which the allowed singular fibers are of Kodaira type $I_{r}$ with $r<6$, $II$, $III$ or $IV$, whereas 
the difference $Y^{st}/G-\mathcal{M}^{\sim}$ parametrizes rational elliptic surfaces with a fiber of type $I_{r}$ with 
$6\leq r\leq 9$. Furthermore, the difference $Y^{ss}/\!\!/G-Y^{st}/G$ parametrizes rational elliptic surfaces with two fibers 
of type $I_{0}^{*}$ or a fiber of type $I_{4}^{*}$, which form a rational curve. We call the projective variety 
$\mathcal{M}^{M}:=Y^{ss}/\!\!/G$ the \emph{Miranda compactification}. The boundary 
$\mathcal{M}^{M}-\mathcal{M}^{\sim}$ is of codimension $5$ in $\mathcal{M}^{M}$ and $\mathcal{M}^{M}-\mathcal{M}$ is 
of codimension $1$ in $\mathcal{M}^{M}$.

The period map for rational elliptic surfaces can be defined through the $G$-orbit space of $P_{12}^{\diamond}$ by the 
Deligne-Mostow theory \cite{Deligne-Mostow}. The space $P_{12}^{\diamond}/G$ parametrizes the unordered tuples of 
$12$ distinct marked points on $\mathbb{P}^{1}$ up to the projective equivalence. The Deligne-Mostow theory has 
shown that this space can be biholomorphically mapped onto a divisor complement of a $9$-dimensional ball quotient, 
the one associated to the Hermitan lattice $\mathbb{Z}[\zeta_{6}](A_{10})$ of type $A_{10}$, namely, 
$X_{o}:=\mathbb{B}_{o}/\mathrm{U}(\mathbb{Z}[\zeta_{6}](A_{10}))$. The group $\mathrm{U}(\mathbb{Z}[\zeta_{6}](A_{10}))$ 
acts on the cusps transitively so that the Baily-Borel compactification $X_{o}^{*}$ is topologically 
$1$-point compactification of $X_{o}$. The full arrangement $\mathscr{H}$ consisting of the hyperplanes perpendicular to a root 
are transitively permuted by the group $\mathrm{U}(\mathbb{Z}[\zeta_{6}](A_{10}))$, so they define an irreducible 
divisor $D_{o}$ in $X_{o}$. This gives rise to a period map 
\[
P_{12}^{\diamond}/G\rightarrow X_{o}-D_{o},
\]
which is an isomorphism. And it extends to isomorphisms
\[
P_{12}^{st}/G\xrightarrow{\cong} X_{o},\qquad P_{12}^{ss}/\!\!/G\xrightarrow{\cong} X_{o}^{*}.
\]
The latter identifies $\mathscr{L}^{\otimes 12}$ with $\mathcal{O}_{P_{12}^{ss}}(1)/\mathrm{U}(\mathbb{Z}[\zeta_{6}](A_{10}))$.

Then the discriminant morphism $d$ induces a period map for the moduli space of rational elliptic surfaces with 
reduced discriminant 
\[
Per:\mathcal{M}=Y^{\diamond}/G\rightarrow P_{12}^{\diamond}/G\rightarrow X_{o}-D_{o},
\]
and the pullback of $\mathscr{L}^{\otimes 12}$ can be identified with $\eta^{\otimes 6}/G$. Heckman and Looijenga have 
carefully analyzed this situation in \cite{Heckman-Looijenga}. It has been shown that this morphism is a closed 
embedding with a hyperball quotient as its image. This hyperball can be given as follows: choose a vector in 
$\mathbb{Z}[\zeta_{6}](A_{10})$ of square norm $6$ (can be realized as the sum of two perpendicular roots), 
and consider its orthogonal complement in $\mathbb{Z}[\zeta_{6}](A_{10})$, which we denote by $\Phi$. 
It has been shown that all such vectors fall into a single orbit under the action of $\mathrm{U}(\mathbb{Z}[\zeta_{6}](A_{10}))$ 
so that the choice of the vector is irrelevant. The $\mathrm{U}(\mathbb{Z}[\zeta_{6}](A_{10}))$-stabilizer of 
$\Phi$ acts on $\Phi$ through its full unitary group, which we denote by $\Gamma$. The sublattice 
$\Phi\subset\mathbb{Z}[\zeta_{6}](A_{10})$ hence defines a hyperball $\mathbb{B}\subset\mathbb{B}_{o}$, 
a corresponding ball quotient $X:=\mathbb{B}/\Gamma$, and also a natural map $j:X\rightarrow X_{o}$. 
The restriction of the arrangement $\mathscr{H}$ to the hyperball $\mathbb{B}$ gives an arithmetically defined 
arrangement on $\mathbb{B}$, and the corresponding divisor $D$ on $X$ is just $j^{-1}(D_{o})$. Therefore, the 
above period map defines an isomorphism 
\[
Per:\mathcal{M}=Y^{\diamond}/G\xrightarrow{\cong} X-D.
\]
It can be easily extended to a morphism $\mathcal{M}^{\sim}\rightarrow X$. But it is not a surjection. That 
is because unlike the situation of the group $\mathrm{U}(\mathbb{Z}[\zeta_{6}](A_{10}))$ acting on the arrangement 
$\mathscr{H}$ which gives only one orbit, the restriction of $\mathscr{H}$ to the hyperball decomposes into 
four classes under the action of $\Gamma$. These four classes can be distinguished by a numerical invariant 
(refer to \cite{Heckman-Looijenga} for a precise description) with possible values $6$, $9$, $15$, $18$. 
So the restriction of $\mathscr{H}$ to the hyperball can be written as the disjoint union of $\Gamma$-equivalence 
classes $\mathscr{H}_{r}$ with $r$ running over these four numbers, and $\mathscr{H}_{r}$ hence defines an irreducible 
divisor $D_{r}$ in $X$. The period map on $\mathcal{M}^{\sim}$ will not take values in $D_{6}\cup D_{9}$. 
So the period map gives an isomorphism 
\[
Per:\mathcal{M}^{\sim}=(Y^{st}\cap d^{-1}(P_{12}^{st}))/G\xrightarrow{\cong} X-(D_{6}\cup D_{9}),
\]
where a generic point of $D_{15}$ (resp. $D_{18}$) parametrizes a rational elliptic surface with a fiber of type 
$II$ (resp. $I_{2}$). Over this isomorphism there is also an isomorphism of orbifold line bundles between $\eta^{\otimes 6}/G$ 
and the pullback of $\mathscr{L}^{\otimes 12}$. It has been shown in \cite{Heckman-Looijenga} that any nonempty 
intersection of members taken from $\mathscr{H}_{6}\cup\mathscr{H}_{9}$ with $\mathbb{B}$ has dimension at least 
$5$. We also have that the boundary $\mathcal{M}^{M}-\mathcal{M}^{\sim}$ is of codimension $5$ in $\mathcal{M}^{M}$. 
So that we can apply Theorem $\ref{thm:GIT compactifications}$ to $U=Y^{st}\cap d^{-1}(P_{12}^{st})$, 
and Corollary $\ref{cor:non-lc to lc}$ tells us the following. 
\begin{corollary}
The period map extends to an isomorphism 
\[
\mathcal{M}^{M}\xrightarrow{\cong}\hat{X}^{\mathscr{H}_{6}\cup\mathscr{H}_{9}}(=:\widehat{X^{\circ}}),
\]
for which the latter is the modification of $X^{*}$ defined by the arrangement $\mathscr{H}_{6}\cup\mathscr{H}_{9}$. 
The birational transformation from $\widehat{X^{\circ}}$ to $X^{*}$ turns non-lc singularities to lc singularities. 
\end{corollary}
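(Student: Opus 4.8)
The plan is to read off both assertions from the general machinery already developed, by feeding the Heckman--Looijenga data into Theorem~\ref{thm:GIT compactifications} and then into Corollary~\ref{cor:non-lc to lc}. Concretely, I would take $Y$ to be the weighted projective space built from $H_{4}\oplus H_{6}-\{(0,0)\}$, $G=\mathrm{SL}(2,\mathbb{C})$, $\eta$ the natural ample orbifold line bundle on $Y$, and the stable locus $U:=Y^{st}\cap d^{-1}(P_{12}^{st})$, so that $U/G=\mathcal{M}^{\sim}$. On the Hodge-theoretic side I take the hyperball $\mathbb{B}\subset\mathbb{B}_{o}$ determined by $\Phi$, the ball quotient $X=\mathbb{B}/\Gamma$, and the arithmetically defined arrangement $\mathscr{H}_{6}\cup\mathscr{H}_{9}$ on $\mathbb{B}$, whose complement on $X$ is $X^{\circ}=X-(D_{6}\cup D_{9})$. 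The isomorphism $Per:\mathcal{M}^{\sim}\xrightarrow{\cong}X-(D_{6}\cup D_{9})$, together with the identification of $\eta^{\otimes 6}/G$ with the pullback of $\mathscr{L}^{\otimes 12}$, supplies exactly the input datum demanded by Theorem~\ref{thm:GIT compactifications}: an identification of $(U,\eta|U)/G$ with the pair $(X^{\circ},\mathscr{L}|X^{\circ})$ coming from a ball arrangement (passing to the relevant powers of the line bundles is harmless, as it does not affect the Proj).

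Next I would verify the two hypotheses of Theorem~\ref{thm:GIT compactifications}. Hypothesis (i), that $Y^{ss}/\!\!/G-U/G=\mathcal{M}^{M}-\mathcal{M}^{\sim}$ has codimension at least $2$, holds since this boundary is of codimension $5$ in $\mathcal{M}^{M}$. Hypothesis (ii), that any nonempty intersection of members of $\mathscr{H}_{6}\cup\mathscr{H}_{9}$ with $\mathbb{L}^{\times}$ has dimension at least $2$, follows from the fact (established in \cite{Heckman-Looijenga}) that every nonempty intersection of members taken from $\mathscr{H}_{6}\cup\mathscr{H}_{9}$ with $\mathbb{B}$ has dimension at least $5$, together with $\dim\mathbb{B}=8\geq 2$. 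Theorem~\ref{thm:GIT compactifications} then promotes $Per$ to an isomorphism $\mathcal{M}^{M}=Y^{ss}/\!\!/G\xrightarrow{\cong}\hat{X}^{\mathscr{H}_{6}\cup\mathscr{H}_{9}}$ by taking Proj's; and by the construction recalled in Section~\ref{sec:birational-operation} the target is precisely the modification $\widehat{X^{\circ}}$ of $X^{*}$ (the Baily--Borel compactification of $X=\mathbb{B}/\Gamma$) defined by the arrangement $\mathscr{H}_{6}\cup\mathscr{H}_{9}$. This gives the first assertion.

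For the second assertion I would apply Corollary~\ref{cor:non-lc to lc}, whose only hypothesis is that the relevant arrangement be nonempty, and this holds here: the period map on $\mathcal{M}^{\sim}$ lands in the \emph{proper} open subset $X-(D_{6}\cup D_{9})$, so $D_{6}\cup D_{9}\neq\emptyset$ and hence $\mathscr{H}_{6}\cup\mathscr{H}_{9}\neq\emptyset$; concretely, $D_{6}$ and $D_{9}$ are realized by rational elliptic surfaces carrying a fiber of Kodaira type $I_{r}$ with $6\leq r\leq 9$, which occur in $Y^{st}/G$. Thus $\widehat{X^{\circ}}$ is a genuine birational modification of $X^{*}$ in terms of a nonempty arrangement, so by Theorem~\ref{thm:GIT non-lc} it has non-lc singularities, while $(X^{*},\Delta^{*})$ is lc; hence $\widehat{X^{\circ}}\dashrightarrow X^{*}$ turns non-lc singularities into lc singularities, as claimed.

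Every step either quotes a result of \cite{Heckman-Looijenga} or a statement proved earlier in the paper, so the argument is largely bookkeeping. The point that needs genuine care, and which I expect to be the main obstacle, is matching the input of Theorem~\ref{thm:GIT compactifications} to the correct objects: the relevant domain is the \emph{sub}-ball $\mathbb{B}\subset\mathbb{B}_{o}$ (not $\mathbb{B}_{o}$), the relevant arrangement is the sub-collection $\mathscr{H}_{6}\cup\mathscr{H}_{9}$ (so that $X^{\circ}=X-(D_{6}\cup D_{9})$ is exactly the image of the period map, with $D_{15}$ and $D_{18}$ staying as honest divisors), and one must keep track of the dimension shift between intersections taken in $\mathbb{B}$ and in $\mathbb{L}^{\times}$ as well as of the orbifold line-bundle identification descending along $j:X\to X_{o}$.
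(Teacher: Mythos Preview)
Your proposal is correct and follows essentially the same route as the paper: the paper simply records that $\mathcal{M}^{M}-\mathcal{M}^{\sim}$ has codimension $5$ and that nonempty intersections from $\mathscr{H}_{6}\cup\mathscr{H}_{9}$ in $\mathbb{B}$ have dimension $\geq 5$, then applies Theorem~\ref{thm:GIT compactifications} with $U=Y^{st}\cap d^{-1}(P_{12}^{st})$ and invokes Corollary~\ref{cor:non-lc to lc}. Your write-up is in fact more careful than the paper's, since you make explicit the nonemptiness of the arrangement and the bookkeeping involving the sub-ball and the orbifold line bundles.
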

From $X^{*}$ to $\widehat{X^{\circ}}$, it looks like a contraction of the divisors $D_{6}$ and $D_{9}$, 
and with their intersection flipped.

\subsection{The moduli space of cubic threefolds}

There is one more example, the moduli space of cubic threefolds, for which the situation is quite like that of the moduli 
space of quartic curves. Let $Y:=\mathbb{P}(H^{0}(\mathbb{P}^{4},\mathscr{O}_{\mathbb{P}^{4}}(3)))$, 
$\eta:=\mathscr{O}_{Y}(1)$ and $G:=\mathrm{SL}(5,\mathbb{C})$. The GIT for cubic threefolds was analyzed independently 
by Allcock \cite{Allcock} and Yokoyama \cite{Yokoyama}. They found that a point of $Y$ is $G$-stable if and only if its 
singularities are of type $A_{1}$, $A_{2}$, $A_{3}$ or $A_{4}$, and is $G$-semistable if and only if it has singularities 
$A_{k}$ or $D_{4}$ at worst or is the chordal cubic (i.e., the secant variety of a rational normal curve in $\mathbb{P}^{4}$ 
of degree $4$). The minimal strictly semistable cubic threefolds are the members of a pencil 
and the member corresponding to a special point. The pencil is of the form 
$f_{a,b}=ax_{2}^{3}+x_{0}x_{3}^{2}+x_{1}^{2}x_{4}-x_{0}x_{2}x_{4}+bx_{1}x_{2}x_{3}$, where we let $\lambda=\frac{4a}{b^{2}}$. 
The generic member (when $\lambda\neq 0,1$) of the pencil has precisely 2 singularities of type $A_{5}$. 
There are also two strictly semistable degenerations, one acquires an additional singularity of type $A_{1}$ (when $\lambda=0$), 
the other one becomes the chordal cubic (when $\lambda=1$). The special point corresponds to the orbit of 
the threefold given by $g=x_{0}x_{1}x_{2}+x_{3}^{3}+x_{4}^{3}$. This threefold has precisely 3 singularities of type 
$D_{4}$. So $Y^{ss}/\!\!/G-Y^{st}/G$ consists of a rational curve and an isolated point. 
The nonsingular threefolds are stable and define an open subset $Y^{\diamond}\subset Y$.

A period map taking values in a ball quotient was obtained by Allcock-Carlson-Toledo \cite{Allcock-Carlson-Toledo} and 
Looijenga-Swierstra \cite{Looijenga-Swierstra} (see also \cite{Casalaina-Martin--Laza}). It is studied by passing to 
the periods of cubic fourfolds obtained as the triple cover of $\mathbb{P}^{4}$ ramified along a cubic threefold. 
Namely, for a smooth cubic threefold $Q\subset\mathbb{P}^{4}$ in a form $f(x_{0},x_{1},x_{2},x_{3},x_{4})$ of degree $3$, 
we consider the cubic fourfold 
\[
K:x_{5}^{3}=f(x_{0},x_{1},x_{2},x_{3},x_{4})
\]
totally ramified along $Q$ of order $3$. Using Voisin's global Torelli theorem for cubic fourfolds, it can be shown that 
the moduli space of cubic threefolds lands in a 10-dimensional ball quotient $X$, the one associated to the Hermitian 
lattice $\Lambda$, namely, $X:=\mathbb{B}/\mathrm{U}(\Lambda)$. Here the lattice $\Lambda$ is given by 
$\Lambda:=(3)\oplus\mathbb{Z}[\zeta_{6}](A_{4})\oplus\mathbb{Z}[\zeta_{6}](A_{4})\oplus U_{\mathbb{Z}[\zeta_{6}]}$ 
where $U_{\mathbb{Z}[\zeta_{6}]}$ is the hyperbolic Eisenstein lattice (i.e., spanned by two isotropic vectors with 
inner product $\sqrt{-3}$). This defines the period map for the moduli space of smooth cubic threefolds 
\[
Per:Y^{\diamond}/G\rightarrow X,
\]
which gives an isomorphism of $Y^{\diamond}/G$ onto the complement of a divisor $D$ in the ball quotient $X$.

It turns out that $\mathrm{U}(\Lambda)$ has two orbits of cusps, corresponding to the degenerations to cubic threefolds 
with $A_{5}$ and $D_{4}$ singularities respectively. The divisor $D$ comes from the full arrangement $\mathscr{H}$ on 
$\mathbb{B}$ which is the union of hyperplanes orthogonal to a root. 
They come in two orbits under the action of $\mathrm{U}(\Lambda)$: $\mathscr{H}=\mathscr{H}_{c}\cup\mathscr{H}_{\Delta}$, 
which gives rise to two irreducible components $D_{c}$ and $D_{\Delta}$ of $D$ in $X$. We call $D_{c}$ and $D_{\Delta}$ 
the \emph{hyperelliptic divisor} and the \emph{discriminant divisor} respectively. The hyperplane sections of $\mathbb{B}$ 
of type $\mathscr{H}_{c}$ are disjoint so that $D_{c}$ is not self-intersected. The period map 
extends to an isomorphism
\[
Y^{st}/G\xrightarrow{\cong} X-D_{c},
\] 
on which there is an isomorphism of orbifold line bundles $\eta/G\cong\mathscr{L}|(X-D_{c})$. Applying Theorem 
$\ref{thm:GIT compactifications}$ to $U=Y^{st}$, and Corollary $\ref{cor:non-lc to lc}$ tells us the following fact. 
\begin{corollary}
	The period map extends to an isomorphism 
	\[
	Y^{ss}/\!\!/G\xrightarrow{\cong} \hat{X}^{\mathscr{H}_{c}}(=:\widehat{X^{\circ}}),
	\]
	and the birational transformation from $\widehat{X^{\circ}}$ to $X^{*}$ turns non-lc singularities to lc singularities.
\end{corollary}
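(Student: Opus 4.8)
The plan is to obtain this corollary as a direct application of Theorem~\ref{thm:GIT compactifications} and Corollary~\ref{cor:non-lc to lc}, taking $U=Y^{st}$ and letting $\mathscr{H}_c$ --- the arrangement of hyperplanes cutting out the hyperelliptic divisor $D_c$ --- be the relevant ball arrangement. First I would invoke the period map of Allcock--Carlson--Toledo \cite{Allcock-Carlson-Toledo} and Looijenga--Swierstra \cite{Looijenga-Swierstra}, recalled above, which gives an isomorphism $Y^{st}/G\xrightarrow{\cong}X-D_c$ carrying $\eta/G$ to $\mathscr{L}|(X-D_c)$. Setting $X^{\circ}:=X-D_c$, this identifies $(U,\eta|U)/G$ with the pair $(X^{\circ},\mathscr{L}|X^{\circ})$ coming from $\mathscr{H}_c$, which is exactly the kind of identification demanded by Theorem~\ref{thm:GIT compactifications}.

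Next I would check the two numerical conditions of that theorem. For~(i): by the GIT description of \cite{Allcock,Yokoyama}, the boundary $Y^{ss}/\!\!/G-Y^{st}/G$ is the union of a rational curve (the pencil $f_{a,b}$ of cubics with two $A_5$ singularities) and a single point (the orbit of $x_0x_1x_2+x_3^3+x_4^3$ with three $D_4$ singularities), hence has dimension at most $1$; since $\dim X=10$, it is of codimension $\ge 2$ in $Y^{ss}/\!\!/G$. For~(ii): the hyperplane sections of $\mathbb{B}$ of type $\mathscr{H}_c$ are pairwise disjoint, so every nonempty intersection of members of $\mathscr{H}_c$ is a single such section, of dimension $9$ in $\mathbb{B}$ and hence of dimension $\ge 2$ inside $\mathbb{L}^{\times}$. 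With both conditions verified, Theorem~\ref{thm:GIT compactifications} extends the period isomorphism to an isomorphism $Y^{ss}/\!\!/G\xrightarrow{\cong}\hat{X}^{\mathscr{H}_c}=:\widehat{X^{\circ}}$, which is the first assertion.

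For the second assertion I would observe that $\mathscr{H}_c$ is nonempty, since it carries the divisor $D_c\subset X$; hence the relevant arrangement for the birational modification $\widehat{X^{\circ}}\dashrightarrow X^{*}$ is nonempty, and Theorem~\ref{thm:GIT non-lc} applies, showing that $\widehat{X^{\circ}}$ has non-lc singularities. (Concretely, a stratum $Z=\mathbb{P}(L,X^{J})^{\circ}$ arising from $\mathscr{H}_c$ on $X^{J}$ with $a_L\ne 1$ contributes an exceptional divisor over $\widehat{X^{\circ}}$ whose discrepancy $\mathrm{codim}(Z)\,(a_L-1)-1$ is $<-1$, since $a_L-1<0$ by Remark~\ref{rem:cone angle}.) Since $(X^{*},\Delta^{*})$ is lc by the general result established earlier, Corollary~\ref{cor:non-lc to lc} then gives that the birational transformation $\widehat{X^{\circ}}\dashrightarrow X^{*}$ turns these non-lc singularities into lc ones on $X^{*}$, which completes the proof.

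The two dimension counts are immediate and the singularity dichotomy is pure machinery, so the computational burden here is negligible. The point that I expect to require the most care is the correct identification of the arrangement: one must confirm that over $Y^{st}/G$ the period map omits exactly the hyperelliptic divisor $D_c$ while still covering the generic point of the discriminant divisor $D_\Delta$ (which parametrizes nodal, i.e.\ $A_1$, cubic threefolds, and is therefore contained in $Y^{st}/G$), so that $X^{\circ}=X-D_c$ and it is $\mathscr{H}_c$, not the full arrangement $\mathscr{H}=\mathscr{H}_c\cup\mathscr{H}_\Delta$, that feeds into Theorem~\ref{thm:GIT compactifications}.
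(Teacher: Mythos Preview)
Your proposal is correct and follows exactly the approach the paper takes: apply Theorem~\ref{thm:GIT compactifications} with $U=Y^{st}$ and the arrangement $\mathscr{H}_c$, then invoke Corollary~\ref{cor:non-lc to lc}. The paper states this in a single sentence without spelling out the verification of hypotheses (i) and (ii); your explicit checks of the codimension of $Y^{ss}/\!\!/G - Y^{st}/G$ and of the dimension of intersections of members of $\mathscr{H}_c$ are the right ones and merely make the argument more self-contained.
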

From $X^{*}$ to $\widehat{X^{\circ}}$, it looks like a one-dimensional blow-up of one cusp in $X^{*}$ 
to the strictly semistable stratum that corresponds to the cubic threefolds with $A_{5}$ singularities, 
and then with the divisor $D_{c}$ contracted.

\bibliographystyle{alpha}
\bibliography{bibfile}
\end{document}